\pgfplotsset{compat=1.18}
\newtheorem{theorem}{Theorem}[section]
\newtheorem{lemma}[theorem]{Lemma}
\newtheorem{definition}[theorem]{Definition}
\newtheorem{proposition}[theorem]{Proposition}
\newtheorem{remark}[theorem]{Remark}
\newtheorem{corollary}[theorem]{Corollary}
\providecommand{\customgenericname}{}
\newcommand{\newcustomtheorem}[2]{%
  \newenvironment{#1}[1]
  {%
   \renewcommand\customgenericname{#2}%
   \renewcommand\theinnercustomgeneric{##1}%
   \innercustomgeneric
  }
  {\endinnercustomgeneric}
}
\title{Kauffman bracket skein module of two families of Seifert manifolds}
\author{Minyi Liang} 
\address{Jilin University, Changchun, China}
\email{\rm myliang24@mails.jlu.edu.cn}
\author{Shangjun Shi} 
\address{East China Normal University, Shanghai, China}
\email{\rm 51255500022@stu.ecnu.edu.cn}
\author{Xiao Wang} 
\address{Jilin University, Changchun, China}
\email{\rm wangxiaotop@jlu.edu.cn}
\keywords{Knot theory, Seifert manifolds, $3$-manifold invariant, Kauffman bracket, skein module.}
\subjclass[2020]{Primary: 57K31. Secondary: 57K10}
\begin{document}

\begin{bibunit}

\maketitle

\begin{abstract}
    We compute the Kauffman bracket skein modules of Seifert manifolds $\Sigma_{0,1}((k_1,1),(k_2,1))$ and $\Sigma_{0,0}((k_1,1),(k_2,1),(k_3,1))$  by providing presentations of them. From the obtained presentations, we show that the Kauffman bracket skein modules of $\Sigma_{0,1}((k_1,1),(k_2,1))$ are free with infinitely many generators when $k_1,k_2\geq 1$ and that of $\Sigma_{0,0}((k_1,1),(k_2,1),(k_3,1))$ are finitely generated when $k_1,k_2,k_3\geq 2$. We also show that the empty link in either case is not trivial.

\end{abstract}

\section{Introduction}\

    In the late of 1980's, the theory of skein modules, a $3$-manifold invariant was introduced independently by Przytycki\cite{prz1} and Turaev\cite{tur} to generalize the polynomial link invariants in $S^3$.
    Since their introduction, skein modules have played an important role across multiple disciplines. In quantum topology, they model TQFT state spaces \cite{BLANCHET1995883}. In algebraic geometry, they recover coordinate rings of $SL_2(\mathbb{C})$ character varieties \cite{Bullock1997RingsOS}\cite{PRZYTYCKIA2000115}. In hyperbolic geometry, skein algebras quantize decorated Teichmüller spaces and Poisson structures \cite{Turaev1991}\cite{Bonahon2010QuantumTF}. Finally, skein modules appear in quantum field theory as algebraic encodings of Wilson loop observables and hyperbolic volume approximations \cite{d701919c6c204ff59f5a847690b6694c}\cite{Kashaev:1996kc}.

    Despite its importance, the computation of skein modules over the Laurent polynomial ring $\mathbb{Z}[A^{\pm}]$ is in general a difficult task.
    In our previous work computing $S_{2,\infty}(\#^2 S^1\times S^2)$\cite{bakshi2024kauffmanbracketskeinmodule}, we found that the product of Chebyshev decorated curves in genus two handlebody serve as a good basis. In this article, we try to extend our technique to  certain Seifert manifolds obtained from genus two handlebody by attaching $2$- and $3$-handles.

    We present here the main theorem we obtained in this article.

    \begin{theorem}
    We compute the Kauffman bracket skein module of the following two kinds of Seifert manifolds.
\label{main thm}
\begin{enumerate}
    \item  When $k_1,k_2\geq 1$, the Kauffman bracket skein module of $\Sigma_{0,1}((k_1,1),(k_2,1))$ is an infinitely generated free module over $Z[A^{\pm}]$.

    \item  When, $k_1,k_2,k_3\geq 2$, the Kauffman bracket skein module of $\Sigma_{0,0}((k_1,1),(k_2,1),(k_3,1))$ is finitely generated over $Z[A^{\pm}]$, and its minimal number of generators is less than $(k_1+1)(k_2+1)(k_3+1)$.

\end{enumerate}
   
    \end{theorem}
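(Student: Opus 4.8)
The plan is to realize both Seifert manifolds as the genus two handlebody $H_2$ with extra handles attached, and then to present the skein module as an explicit quotient of the free module $S_{2,\infty}(H_2)$. Identifying $H_2$ with $P\times I$ for a pair of pants $P$, the module $S_{2,\infty}(H_2)$ is free over $\mathbb{Z}[A^{\pm}]$ with basis the Chebyshev monomials $\{T_a(x)T_b(y)T_c(z):a,b,c\ge 0\}$, where $x,y$ are the curves encircling the two cone points and $z$ encircles the outer boundary of the base disk. Writing $\Sigma_{0,1}((k_1,1),(k_2,1))$ as $H_2$ with two $2$-handles glued along the curves that realize the two exceptional fibers, the relative handle-attachment principle for skein modules gives $S_{2,\infty}(\Sigma_{0,1}((k_1,1),(k_2,1)))$ as the quotient of $S_{2,\infty}(H_2)$ by the submodule generated by the handle-slide relations $\ell-\mathrm{sl}_{\gamma_i}(\ell)$. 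Since $\Sigma_{0,0}((k_1,1),(k_2,1),(k_3,1))$ is obtained from $\Sigma_{0,1}((k_1,1),(k_2,1))$ by Dehn filling its torus boundary so as to create the third $(k_3,1)$ fiber, i.e.\ by one further $2$-handle together with a $3$-handle, I would compute in two stages: first present $S_{2,\infty}(\Sigma_{0,1})$, then quotient by the relations of the third filling to reach $S_{2,\infty}(\Sigma_{0,0})$.

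The technical heart is to express each handle-slide relation in the Chebyshev basis. I expect the $(k_i,1)$ fiber to contribute a relation obtained by pushing a strand $k_i$ times around the $i$-th handle; after resolving the resulting crossings and repeatedly applying the product-to-sum rule $T_mT_n=T_{m+n}+T_{|m-n|}$, this should become a recursion that rewrites a top-index generator $T_{a+k_i}(\cdot)$ in terms of strictly lower ones, up to a framing factor that is a power of $A$. Because every such framing factor is a unit of $\mathbb{Z}[A^{\pm}]$, the recursion can always be solved for its leading term. Consequently, the two exceptional-fiber $2$-handles truncate the $x$- and $y$-indices to the finite ranges $0\le a\le k_1$ and $0\le b\le k_2$ while leaving the $z$-index untouched. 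Tracking these framing powers precisely is essential, since their being units is exactly what will prevent spurious torsion.

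For part (1), I would prove that no relation is imposed among the surviving monomials $\{T_a(x)T_b(y)T_c(z):0\le a\le k_1,\ 0\le b\le k_2,\ c\ge 0\}$ and that the quotient is free on them: a leading-term filtration by the $z$-index, combined with the unit leading coefficients above, identifies this countable family as a genuine $\mathbb{Z}[A^{\pm}]$-basis, giving an infinitely generated free module. For part (2), the third filling adds relations that likewise truncate the $z$-index to $0\le c\le k_3$, so the $(k_1+1)(k_2+1)(k_3+1)$ truncated monomials span $S_{2,\infty}(\Sigma_{0,0})$; exhibiting at least one genuine dependence among them---coming from the interaction of the $3$-handle with the fiber class---then yields the strict bound on the minimal number of generators. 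Nontriviality of the empty link in each case I would establish by constructing a $\mathbb{Z}[A^{\pm}]$-linear map out of the presented module that sends $\emptyset\mapsto 1$ and annihilates every relation, forcing $\emptyset\ne 0$.

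The main obstacle is the explicit computation of the handle-slide relations in the Chebyshev basis: these are genuinely $A$-dependent recursions, and controlling their framing coefficients with enough precision to (a) rule out torsion and pin down the exact truncation pattern in the free case, and (b) isolate the extra dependence responsible for the strict bound in the closed case, is where the real work lies. A secondary difficulty is upgrading the detecting homomorphism used for the empty link into enough independent functionals to separate all the claimed basis elements in part (1).
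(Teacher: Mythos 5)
Your global strategy---present $S_{2,\infty}(H_2)$ as a free module on Chebyshev monomials in the three boundary-parallel curves of the pair of pants, quotient by handle-slide relations, pass from the bounded to the closed manifold by one further filling, and detect the empty link by a functional annihilating all relators---is indeed the paper's strategy. But there is a genuine gap in part (1), at the step where you claim the relations ``truncate the $x$- and $y$-indices'' independently and that ``no relation is imposed among the surviving monomials'' $\{T_a(x)T_b(y)T_c(z): 0\le a\le k_1,\ 0\le b\le k_2,\ c\ge 0\}$. The actual handle-slide relators do not split into two one-variable recursions with a single leading term: the attaching curve wraps \emph{both} surgery tori, so each relator couples the two indices and carries the reflection symmetry $R(n_1,n_2)=R(k_1-n_1,k_2-n_2)$. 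Explicitly (Proposition \ref{Proposition 3.2}, Corollary \ref{D}) the relator is $R_{12}^{n_1,n_2,n_3}=R_{12}(n_1,n_2,n_3)-R_{12}(k_1-n_1,k_2-n_2,n_3)$ with $R_{12}(n_1,n_2,n_3)=-A^{-n_1-n_2-2}s_1^{n_1}s_2^{n_2}s_3^{n_3}-A^{-n_1-n_2+2}s_1^{n_1-2}s_2^{n_2-2}s_3^{n_3}-A^{-n_1-n_2}s_1^{n_1-1}s_2^{n_2-1}\left(s_3^{n_3+1}+s_3^{n_3-1}\right)$. When $(n_1,n_2)$ lies \emph{inside} your box there is no exterior leading term to solve for, and the relator is a genuine dependence among your claimed basis. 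Concretely, for $k_1=k_2=1$: the relator $R_{12}^{0,1,n_3}$ forces $s_2s_3^{n_3}=s_1s_3^{n_3}$, and $R_{12}^{1,1,n_3}$ forces $A^{-4}s_1s_2s_3^{n_3}=(A^2+A^{-2})s_3^{n_3}-A^{-2}\left(s_3^{n_3+1}+s_3^{n_3-1}\right)$, so of your four families of monomials only two survive---matching the paper's basis $G^{k_1,k_2}$, a staircase region roughly \emph{half} your box (essentially $n_2\le k_2/2$, $n_1\le k_1$), not the full box. Your leading-term filtration therefore cannot certify freeness as stated; the paper must first prove linear identities among the relators themselves (Lemmas \ref{Lemma formula 0}--\ref{Lemma formula 1}) to cut the relator family down to $J^{12}_{min}$ (Proposition \ref{Jmin}), and only then run the filtration argument (Lemma \ref{P base chain}), in which each newly admitted relator is matched bijectively to one new monomial with unit leading coefficient. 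A related geometric slip feeds this: attaching \emph{two} $2$-handles to $H_2$, one per exceptional fiber, produces a closed manifold rather than $\Sigma_{0,1}((k_1,1),(k_2,1))$; the bounded manifold comes from a single $2$-handle whose attaching curve runs over both tori, which is exactly why the relations mix the indices.

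For part (2), your two-stage plan agrees with the paper: it reduces to the $D^2(k_1,k_2)$ computation (Theorem \ref{thm_D^2}) and then lowers the $s_3$-exponent using the $R_{13}$ and $R_{23}$ relators once $n_3>k_3$ (Lemma \ref{Lemma 4.5}), so finite generation by the $(k_1+1)(k_2+1)(k_3+1)$ box monomials survives your outline, and an in-box relator of the kind exhibited above supplies the dependence needed for the strict bound. Your proposed detecting functional for the empty link is realized in the paper as the quantum evaluation $\eta_A$ at $A=e^{\pi i/3}$, giving $\eta_A(\emptyset)=16\ne 0$ (in the bounded case nontriviality is immediate from freeness). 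The repair needed is thus concentrated in part (1): the relation analysis and the identification of the correct, smaller free basis.
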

    The finitely generatedness of small Seifert manifolds has been fully studied by Detcherry, Kalfagianni and Sikora in \cite{detcherry2024skeinmodulescharactervarieties}. We provide an elementary proof for certain cases, and we hope to find their explicit structure in future work.

   This article is organized as follows: In Section $2$, we  firstly recall some foundational definitions and properties of the Kauffman bracket skein module theory. Then, we recall the Seifert manifolds and their surgery diagrams. In Section $3$, we determine the relation submodule of $S_{2,\infty}(D^2(k_1,k_2))$ and $S_{2,\infty}(S^2(k_1,k_2,k_3))$ through their surgery diagrams. In Section $4$, for $k_1,k_2\geq 1$, we show that $S_{2,\infty}(D^2(k_1,k_2))$ is an infinitely generated free module and provide an explicit generating set. In Section $5$, we prove that $S_{2,\infty}(S^2(k_1,k_2,k_3))$ is finitely generated.
\section{Basic definitions and properties}

In this section, we recall the definition of the Kauffman bracket skein module and its properties. Also, we introduce the Heegaard diagrams and the surgery diagrams of the Seifert manifolds $\Sigma_{0,0}((k_1,1),(k_2,1),(k_3,1))$ and $\Sigma_{0,1}((k_1,1),(k_2,1))$.

\subsection{Kauffman bracket skein module}
\begin{definition}
    Let $M$ be an oriented 3-manifold, $R$ a commutative ring with unity, and $A\in R$ a fixed invertible element, the Kauffman bracket skein module of $M$, denoted by $S_{2,\infty}(M;R,A)$, is the quotient of the free $R$-module generated by ambient isotopy classes of unoriented framed links (including the empty link $\emptyset$) in $M$, by the Kauffman bracket skein relations, which are 
    $$(1)\,L_+-AL_0-A^{-1}L_{\infty};\quad (2)\,L\sqcup \bigcirc +(A^2+A^{-2})L.$$
    Where $\bigcirc$ denotes the trivial framed knot in $M$ and the skein triple $(L_+,L_0,L_{\infty})$ denotes three framed links in $M$, which are identical except in a small 3-ball in $M$ where they differ as illustrated in Figure \ref{Skein triple for the Kauffman bracket skein module}.
    
    \begin{figure}[H]
    \centering
    \includegraphics[width=0.7\linewidth]{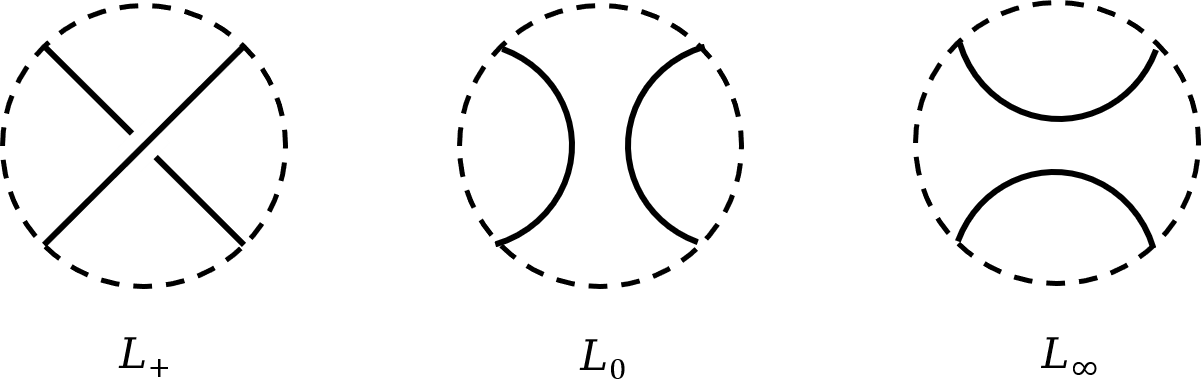}
    \caption{Skein triple for the Kauffman bracket skein module.}
        \label{Skein triple for the Kauffman bracket skein module}
    \end{figure}
\end{definition}
\begin{remark}
    We denote $S_{2,\infty}(M;\mathbb{Z}[A^{\pm 1}],A)$ by $S_{2,\infty}(M)$ for convenience.
\end{remark}
It is also convenient for us to use the relative Kauffman bracket skein module, and here we provide its definition.

\begin{definition}
    Let $M$ be an oriented 3-manifold with boundary, $R$ a commutative ring with unity, and $A\in R$ a fixed invertible element, $\left\{x_i\right\}_1^{2n}\subset \partial M$ be framed points(embedding of small intervals), the relative Kauffman bracket skein modules of $(M,\left\{x_i\right\}_1^{2n})$, denoted by $S_{2,\infty}(M,\left\{x_i\right\}_1^{2n};R,A)$, is the quotient of the free $R$-module generated by ambient isotopy classes of unoriented framed links (including the empty link $\emptyset$) in $(M,\left\{x_i\right\}_1^{2n})$ keeping $\left\{x_i\right\}_1^{2n}$ fixed, by the Kauffman relations as in Figure \ref{Skein triple for the Kauffman bracket skein module}.
\end{definition}

The following theorem tells how the skein module of a $3$-dimensional manifold changes when attaching a $2$-handle or a $3$-handle, which is fundamental for our computation.
\begin{theorem}[\cite{1998Fundamentals}]
    \ \\
    \begin{enumerate}
    \item[(1)] Let $i: M \hookrightarrow N$ be an orientation preserving embedding of 3-manifolds. This yields a module homomorphism  $i_{*}: S_{2, \infty}(M;R,A) \longrightarrow S_{2, \infty}(N;R,A)$.\\
     \item[(2)]Let $M=M_{1} \sqcup M_{2}$ be the disjoint union of oriented 3-manifolds $M_{1}$ and $M_{2}$. Then $S_{2, \infty}(M;R,A) \cong S_{2, \infty}\left(M_{1};R,A\right) \otimes_{R} S_{2, \infty}\left(M_{2};R,A\right).$\\
     \item[(3)]If $N$ is obtained from $M$ by adding a 3-handle to M and $i: M \hookrightarrow N$ is the associated embedding, then the induced homomorphism $i_{*}: S_{2, \infty}(M;R,A) \longrightarrow S_{2, \infty}(N;R,A)$ is an isomorphism.\\
     \item[(4)] (Handle Sliding Lemma) Let $(M, \partial M)$ be a 3-manifold with boundary and $\gamma$ be a simple closed curve on  $\partial M$. Additionally, let $N=M_{\gamma}$ be the 3-manifold obtained from M by adding a 2-handle along $\gamma$ and $i: M \hookrightarrow N$ be the associated embedding. Then the induced homomorphism $i_{*}: S_{2, \infty}(M;R,A) \longrightarrow S_{2, \infty}(N;R,A)$ is an epimorphism. Furthermore, the kernel of $i_{*}$ is generated by the relations yielded by 2-handle slidings. In particular, if $\mathcal{L}_{\text {gen }}^{f r}$ is a set of framed links in $M$ that generates $\mathcal{S}_{2, \infty}(M;R,A)$, then  $S_{2, \infty}(N;R,A) \cong S_{2, \infty}(M;R,A) / \mathcal{J}$, where $\mathcal{J}$ is the submodule of $S_{2, \infty}(M;R,A)$ generated by the expressions $L-s l_{\gamma}(L)$. Here $L \in \mathcal{L}_{\text {gen }}^{f r}$ and $s l_{\gamma}(L)$ is obtained from L by sliding it along $\gamma$.
    \end{enumerate}
\end{theorem}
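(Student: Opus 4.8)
The plan is to prove the four assertions separately, in each case exploiting the \emph{locality} of the two Kauffman bracket relations: both are supported in an arbitrarily small $3$-ball, so any geometric construction that carries small balls to small balls, and (for the crossing relation) respects the ambient orientation, will descend to the skein quotient. For (1), I would send a framed link $L\subset M$ to $i(L)\subset N$ and extend $R$-linearly. An ambient isotopy of $L$ in $M$ pushes forward under the embedding $i$, so the assignment is well defined on isotopy classes; to see it descends to the quotient I would check that each generator of the relation submodule of $M$ maps into that of $N$. A skein triple $(L_+,L_0,L_\infty)$ lives in a ball $B\subset M$, and $i(B)\subset N$ is again a ball in which $(i(L_+),i(L_0),i(L_\infty))$ is a skein triple; orientation preservation is exactly what guarantees the roles of $A$ and $A^{-1}$ are not interchanged, and a trivial framed circle maps to a trivial framed circle. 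For (2), every framed link in $M=M_1\sqcup M_2$ splits uniquely as $L_1\sqcup L_2$ with $L_j\subset M_j$, so I would define the $R$-bilinear pairing $(L_1,L_2)\mapsto L_1\sqcup L_2$, check that it respects the relations in each factor (by locality, a relation applied inside $M_j$ is a relation in $M$), obtain $\Phi\colon S_{2,\infty}(M_1)\otimes_R S_{2,\infty}(M_2)\to S_{2,\infty}(M)$, and exhibit the componentwise splitting as its inverse, with $\emptyset\otimes\emptyset$ corresponding to the empty link.

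For (3), write $N=M\cup_{S^2}B^3$, where the $3$-handle caps off a spherical boundary component along a $2$-sphere $S$ that bounds the ball $B^3$ in $N$. Surjectivity of $i_*$ is immediate: the capping sphere bounds a ball, so no link component can link it, and a general-position argument pushes any link in $N$ entirely out of $B^3$ into $M$. For injectivity I would construct a left inverse $S_{2,\infty}(N)\to S_{2,\infty}(M)$ recording the class of such a pushed-off representative; the content is independence of the push-off. Two push-offs differ by an isotopy sweeping across $B^3$, and because $B^3$ is a ball with $S_{2,\infty}(B^3)=R\cdot\emptyset$, any portion of a link swept through the ball can be retracted without changing its skein class. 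Verifying that this assignment is compatible with the (local, hence realizable outside $B^3$) relations of $N$ then shows it inverts $i_*$.

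For (4), surjectivity follows from (1) together with the fact that the $2$-handle $D^2\times D^1$ deformation retracts onto its attaching annulus in $\partial M$, so every link in $N$ isotopes into $M$. The real work is the kernel. One containment is geometric: pushing a link off the handle in two ways that straddle the handle's cocore yields $L$ and $sl_\gamma(L)$, which are isotopic in $N$, so $L-sl_\gamma(L)\in\ker i_*$ and $\mathcal{J}\subseteq\ker i_*$. For the reverse containment I would take an isotopy in $N$ realizing a relation among links of $M$, make it transverse to the cocore arc $\{0\}\times D^1$ of the handle, and read off the finitely many isolated transverse intersections. Between consecutive crossings the link misses the cocore and the isotopy can be pushed into $M$; at each crossing a strand passes over the handle, which upon pushing back to $M$ is precisely a handle slide over $\gamma$. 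Assembling these local contributions shows $L-L'$ lies in $\mathcal{J}$, giving $\ker i_*\subseteq\mathcal{J}$ and hence $S_{2,\infty}(N)\cong S_{2,\infty}(M)/\mathcal{J}$.

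The hard part will be the injectivity-type arguments in (3) and (4): showing that the geometric push-off descends to a \emph{well-defined} inverse requires controlling all isotopies of $N$ relative to the attaching region, and in (4) one must simultaneously track the skein relations that may occur inside the handle and confirm they contribute nothing beyond handle slides. This transversality and general-position analysis, rather than the formal module algebra of (1) and (2), is the technical core of the theorem.
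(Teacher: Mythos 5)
This theorem is stated in the paper as quoted background, cited to \cite{1998Fundamentals}, and the paper supplies no proof of its own; the relevant comparison is therefore with the standard proof in Przytycki's work, and your outline reproduces that argument faithfully. Locality of the two skein relations gives (1) and (2) exactly as you say; for (3) the usual formulation is that links and the traces of their isotopies are at most $2$-dimensional, hence by general position miss an interior point of the $3$-handle, after which radial retraction pushes every link and every relation into $M$ --- this is your left-inverse argument made precise, and your dimension count in (4) (isotopy trace is $2$-dimensional, the cocore arc is $1$-dimensional, so transverse intersections in the $3$-manifold are isolated points, each contributing one slide over $\gamma$) is the heart of Przytycki's Handle Sliding Lemma.

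The one place your sketch falls short of the full statement is the final ``in particular'' clause of (4): your transversality argument shows that $\ker i_*$ is generated by $L - sl_\gamma(L)$ as $L$ ranges over \emph{all} framed links in $M$, whereas the theorem asserts it suffices to let $L$ range over any generating set $\mathcal{L}_{\text{gen}}^{fr}$. This reduction needs a separate check: if $L'$ is expressed through skein relations in terms of generators $L_j$, one must verify that $L' - sl_\gamma(L')$ lies in the submodule generated by the $L_j - sl_\gamma(L_j)$. The point is that each skein relation is supported in a ball that can be isotoped off the sliding region near the attaching annulus, so sliding commutes with the relation and the slid links satisfy the same skein identity; without this observation the presentation $S_{2,\infty}(N) \cong S_{2,\infty}(M)/\mathcal{J}$ with $\mathcal{J}$ built only from a generating set --- which is precisely the form the paper uses in Corollary \ref{2.2} and Theorem \ref{2.3} --- does not follow. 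Everything else in your proposal is correct and is the standard route.
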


\begin{remark}

In this article, we will call $\mathcal{J}$ the handle sliding submodule of $N$ from $M$ and abbreviate $M$ when it is clear from context.
    
\end{remark}

The handle sliding lemma can be generalised to the case where a manifold is obtained by attaching more than one 2-handle to the 3-manifold $M$, which leads to the following corollary.
\begin{corollary}
    
\label{2.2}\cite{McLENDON2006Detecting}
    Let $M,\partial M$ be a 3-manifold with boundary and $\beta_1$, $\beta_2$,...,$\beta_n$ be disjoint simple closed curves in $\partial M$. Glue n 2-handles to $M$ along the curves $\beta_i$ and denote the resultant 3-manifold by $N$. If $\mathcal{J}_i$ is the submodule of $S_{2,\infty}(M;R,A)$ generated by handle slides along $\beta_i$, then $S_{2,\infty}(N;R,A)\cong S_{2,\infty}(M;R,A)/(\mathcal{J}_1+\mathcal{J}_2+\cdots+\mathcal{J}_n)$.
\end{corollary}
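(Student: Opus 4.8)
The plan is to argue by induction on the number $n$ of attached $2$-handles, taking the single-handle case (part (4) of the preceding theorem) as the base step and the third isomorphism theorem for modules as the engine of the inductive step. For $n=1$ the assertion is precisely the Handle Sliding Lemma, giving $S_{2,\infty}(N)\cong S_{2,\infty}(M)/\mathcal{J}_1$.

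For the inductive step, write $M_k$ for the $3$-manifold obtained from $M$ by attaching $2$-handles along $\beta_1,\dots,\beta_k$, so $M_0=M$ and $M_n=N$. The key geometric point is that the curves $\beta_1,\dots,\beta_n$ are pairwise disjoint, so each $2$-handle is glued along a collar neighborhood of its curve that avoids all the others; hence $\beta_n$ persists unchanged as a simple closed curve in $\partial M_{n-1}$ and $N=(M_{n-1})_{\beta_n}$. Applying the inductive hypothesis to the first $n-1$ handles yields an isomorphism $S_{2,\infty}(M_{n-1})\cong S_{2,\infty}(M)/(\mathcal{J}_1+\cdots+\mathcal{J}_{n-1})$ induced by the inclusion $i\colon M\hookrightarrow M_{n-1}$, and the base case applied to the remaining handle gives $S_{2,\infty}(N)\cong S_{2,\infty}(M_{n-1})/\mathcal{J}_n'$, where $\mathcal{J}_n'$ is the handle-sliding submodule for slides along $\beta_n$ computed inside $M_{n-1}$.

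It remains to assemble these two descriptions, which is where I expect the real work to lie. Taking as generating set for $S_{2,\infty}(M_{n-1})$ the image under $i_*$ of a generating set of $S_{2,\infty}(M)$ (still generating, since $i_*$ is an epimorphism), one checks that a slide of a framed link $L$ along $\beta_n$ in $M$ is carried by the inclusion to a slide of $i_*(L)$ along $\beta_n$ in $M_{n-1}$; this again uses disjointness, as each such slide is supported near $\beta_n$ and cannot interact with the earlier handles. Hence $\mathcal{J}_n'$ is exactly the image of $\mathcal{J}_n$ under the projection $Q\to Q/K$, where $Q=S_{2,\infty}(M)$ and $K=\mathcal{J}_1+\cdots+\mathcal{J}_{n-1}$. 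The third isomorphism theorem then gives
\[
S_{2,\infty}(N)\;\cong\;(Q/K)\big/\big((\mathcal{J}_n+K)/K\big)\;\cong\;Q/(\mathcal{J}_1+\cdots+\mathcal{J}_n),
\]
completing the induction. The main obstacle is this last bookkeeping: verifying rigorously that handle slides along $\beta_n$ correspond under $i_*$ before and after attaching the other handles, so that $\mathcal{J}_n$ maps onto the relation submodule $\mathcal{J}_n'$; everything here rests on the disjointness hypothesis ensuring each slide remains a local operation.
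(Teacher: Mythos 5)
Your proof is correct: the induction on the number of $2$-handles, with the Handle Sliding Lemma as the base case, the observation that disjointness of the $\beta_i$ lets $\beta_n$ survive as a curve in $\partial M_{n-1}$ so that $N=(M_{n-1})_{\beta_n}$ and slides along $\beta_n$ commute with the inclusion $M\hookrightarrow M_{n-1}$, and the third isomorphism theorem to pass from $(Q/K)\big/\big((\mathcal{J}_n+K)/K\big)$ to $Q/(\mathcal{J}_1+\cdots+\mathcal{J}_n)$, is exactly the standard argument behind this corollary. The paper itself gives no proof (it cites McLendon), and your argument matches the cited one, including the only delicate point, which you handle correctly: a generating family of framed links of $S_{2,\infty}(M)$ pushes forward to one of $S_{2,\infty}(M_{n-1})$ because $i_*$ is an epimorphism on links, so $\mathcal{J}_n'$ is precisely the image of $\mathcal{J}_n$ under the quotient projection.
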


\begin{theorem}\label{2.3}\cite{Bullock_2005,BLP}
    Consider any relative curve $\alpha$ in $(H_n;\{u,v\})$. Now, handle slidings in $(H_n)_\beta$ take place locally in the neighbourhood of the curve $\beta$. For every relative curve $\alpha$, handle sliding in $(H_n)_\beta$ replaces the curve $\alpha\cup\beta_2$ with the curve $\alpha\cup\beta_1$. This gives the handle sliding relation, $\alpha\cup\beta_2\equiv\alpha\cup\beta_1$. By introducing the $R$-linear homomorphism $\omega$: $S_{2,\infty}(H_n,\{u,v\};R,A)\to S_{2,\infty}(H_n;R,A)$, defined by $\omega(\alpha)=\alpha\cup\beta_2-\alpha\cup\beta_1$,  we see that $\omega(S_{2,\infty}(H_n,\{u,v\};R,A))=\mathcal{J}$. where $\mathcal{J}$ is the submodule of $S_{2,\infty}(H_n;R,A)$ generated by the expressions $L-s l_{\beta}(L)$, $L \in \mathcal{L}_{\text {gen}}^{fr}$ of $H_n$.
\end{theorem}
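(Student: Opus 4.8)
The plan is to check first that $\omega$ is a well-defined $R$-linear homomorphism, and then to establish the asserted equality $\omega\big(S_{2,\infty}(H_n,\{u,v\};R,A)\big)=\mathcal{J}$ by proving the two inclusions separately. The map $\omega$ is initially defined on the free module on relative links by $\alpha\mapsto\alpha\cup\beta_2-\alpha\cup\beta_1$, so to see it descends to the relative skein module I would observe that the points $u,v$ and the closing arcs $\beta_1,\beta_2$ lie on $\partial H_n$; hence any $3$-ball supporting a relative Kauffman relation may be taken disjoint from them. Closing up by $\beta_1$, or by $\beta_2$, then carries a relative skein relation to an honest Kauffman relation in $S_{2,\infty}(H_n;R,A)$, which is zero, and subtracting the two closures shows $\omega$ annihilates the relations and therefore factors through $S_{2,\infty}(H_n,\{u,v\};R,A)$.

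For the inclusion $\omega(\,\cdot\,)\subseteq\mathcal{J}$, the key is the geometric fact that the two closures of $\alpha$ differ by a single handle slide along $\beta$. Writing $L=\alpha\cup\beta_1$ and band-summing $L$ with the full attaching circle $\beta=\beta_1\cup\beta_2$ along $\beta_1$ cancels the $\beta_1$ portion and produces $\alpha\cup\beta_2$; thus $sl_{\beta}(L)=\alpha\cup\beta_2$ and $\omega(\alpha)=\alpha\cup\beta_2-\alpha\cup\beta_1=sl_{\beta}(L)-L$ is, up to sign, one of the generators $L-sl_{\beta}(L)$ of $\mathcal{J}$. By $R$-linearity the whole image lies in $\mathcal{J}$.

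For the reverse inclusion I would run this construction backwards. Given a generator $L-sl_{\beta}(L)$ of $\mathcal{J}$ with $L\in\mathcal{L}_{\text{gen}}^{fr}$, I put the slide in standard position so that the band realizing it meets $\beta$ with its two feet at the fixed points $u,v$; cutting $L$ there produces a relative curve $\alpha$ satisfying $L=\alpha\cup\beta_1$ and $sl_{\beta}(L)=\alpha\cup\beta_2$, so that $L-sl_{\beta}(L)=-\omega(\alpha)$ lies in the image. Together the two inclusions give the claimed equality $\omega\big(S_{2,\infty}(H_n,\{u,v\};R,A)\big)=\mathcal{J}$.

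The hard part will be the geometric bookkeeping underlying the two middle steps: verifying, with the correct framings, the identity $sl_{\beta}(\alpha\cup\beta_1)=\alpha\cup\beta_2$, and, for the reverse inclusion, checking that every handle slide along $\beta$ can indeed be normalized so that the two feet of its band sit at the fixed points $u$ and $v$. Once this normalization is secured, the algebraic identities are immediate, and the well-definedness argument is routine.
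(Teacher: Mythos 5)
The paper itself gives no proof of this statement---it is quoted from \cite{Bullock_2005,BLP}, and its wording already encodes the intended argument (handle sliding replaces $\alpha\cup\beta_2$ by $\alpha\cup\beta_1$, so $\omega$ surjects onto the handle-sliding submodule)---and your proposal correctly fills in exactly that argument: well-definedness of $\omega$ since relative skein relations live in balls disjoint from the closing arcs, the slide identity $sl_{\beta}(\alpha\cup\beta_1)=\alpha\cup\beta_2$ for one inclusion, and normalization of the slide band at $u,v$ for the other. One small fix: in the forward inclusion, $\alpha\cup\beta_1$ need not belong to the chosen generating set $\mathcal{L}_{\mathrm{gen}}^{fr}$, so rather than calling $\omega(\alpha)$ ``one of the generators'' you should invoke the Handle Sliding Lemma's identification of $\mathcal{J}$ with $\ker i_*$, which contains \emph{all} slide differences $L-sl_{\beta}(L)$.
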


Notice that, based on Corollary \ref{2.2} and Theorem \ref{2.3}, one can always provide a infinite presented presentation of $S_{2,\infty}(N;R,A)$ with $N$ obtained by attaching $2$-handles on $H_n$. The main goal of our work is to understand the relation submodule and dig out the structure of the module itself.

\subsection{Skein module of $I$-bundles over surfaces}
Next, we recall some properties of the Kauffman skein modules of $I$-bundles over surfaces.
\begin{theorem}\cite{2006Skein}\label{2006skein}
    Let $M$ be an oriented 3-manifold which is equal to $F\times I$, where $F$ is an oriented surface. Then $S_{2,\infty}(M;R,A)$, is a free module with a basis $B(F)$ consisting of isotopy class of links in F without contractible components (but including the empty link).
\end{theorem}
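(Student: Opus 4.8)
The plan is to prove the two halves of the statement separately: that the multicurves in $B(F)$ span $S_{2,\infty}(F\times I;R,A)$, and that they are linearly independent. The spanning direction is the routine one. Given any framed link $L\subset F\times I$, I would first isotope it into a neighborhood of a generic diagram $D$ projected to $F$, so that the framing is the blackboard framing and all crossings are transverse double points. Applying skein relation $(1)$ at each crossing writes $[L]$ as a $\mathbb{Z}[A^{\pm}]$-linear combination of crossingless diagrams, i.e.\ of disjoint unions of embedded simple closed curves in $F$. Any such curve bounding a disk in $F$ is a contractible component, and relation $(2)$ deletes it at the cost of the scalar $-(A^2+A^{-2})$. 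Iterating, every link reduces to a combination of systems of disjoint simple closed curves with no contractible components, i.e.\ to elements of $B(F)$; hence $B(F)$ spans.

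The real content, and the main obstacle, is linear independence. My strategy is to build a $\mathbb{Z}[A^{\pm}]$-linear retraction $\Phi$ onto the \emph{free} module $R[B(F)]$ generated by $B(F)$. On the free module generated by all framed links, define $\Phi(L)$ to be the $B(F)$-combination produced by the reduction above: resolve every crossing via $(1)$ and delete contractible components via $(2)$. If $\Phi$ is well defined on isotopy classes of links (independent of the chosen diagram and of the order of resolution), then it automatically annihilates both skein relations, since it is assembled out of them; it therefore descends to a map $\Phi\colon S_{2,\infty}(F\times I;R,A)\to R[B(F)]$. Moreover $\Phi(\gamma)=\gamma$ for every $\gamma\in B(F)$, because a multicurve with no contractible components has no crossings and no reducible circles. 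Granting well-definedness, independence is immediate: from a relation $\sum_k c_k\,\gamma_k=0$ in the skein module, applying $\Phi$ yields $\sum_k c_k\,\gamma_k=0$ in the free module $R[B(F)]$, forcing every $c_k=0$.

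Thus everything hinges on the well-definedness of $\Phi$, which is a confluence (Church--Rosser) statement for the rewriting system determined by the skein relations on surface diagrams, and this is where I expect the difficulty to concentrate. To establish it I would fix a finite cut system $E$ of disjoint arcs and simple closed curves whose complement $F\setminus E$ is a union of disks, and assign to each diagram its minimal geometric intersection number with $E$. Dehn--Thurston-type coordinates separate the isotopy classes in $B(F)$, so distinct generators carry distinct labels, and $|{\cdot}|_E$ together with a lexicographic refinement well-orders $B(F)$; this furnishes both a termination measure for the rewriting and a target in which to track leading terms. The heart of the argument is a local-confluence lemma showing that any two sequences of crossing resolutions of the same diagram, and any two diagrams representing the same link, lead to identical $B(F)$-combinations. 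The delicate point is that crossing resolutions are local three-ball moves a priori insensitive to the ambient surface, so the global topology enters only through the minimal-position technology for curves on $F$: one must verify that Reidemeister moves on $F$ and the resolution process commute up to the relations, and that resolutions can always be ordered so as never to raise the $E$-complexity. This minimal-position case analysis is the bulk of the work and the genuine obstacle, the remaining steps being formal consequences of confluence.
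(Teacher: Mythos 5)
The paper gives no proof of this theorem at all---it is quoted from the reference \cite{2006Skein}---so your attempt has to be measured against the standard argument in the literature. Your spanning half is correct and routine. The independence half, however, has a genuine gap: you make everything hinge on a local-confluence (Church--Rosser) lemma for the rewriting system, and that lemma is only announced, never proved. The proposal stops precisely at the point you yourself identify as ``the bulk of the work,'' so as a proof it is incomplete.

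Beyond the incompleteness, the confluence framing misdiagnoses where the difficulty lies, and the machinery you invoke (well-ordering $B(F)$ by intersection with a cut system, Dehn--Thurston coordinates, a termination measure) is unnecessary. The standard way to get well-definedness of your retraction $\Phi$ is to define it on a fixed diagram $D$ with $n$ crossings by the Kauffman \emph{state sum}: sum over all $2^n$ simultaneous resolutions with the appropriate powers of $A$, then delete contractible components at a factor of $-(A^2+A^{-2})$ each. Smoothings at distinct crossings take place in disjoint disks, so this value is manifestly independent of any ordering of resolutions---no confluence statement is needed at the diagram level. The only nontrivial inputs are then: (i) framed links in $F\times I$ up to ambient isotopy correspond to diagrams on $F$ up to framed Reidemeister moves, which is a general-position projection argument you should cite rather than re-derive through minimal-position analysis; and (ii) invariance of the state-sum value in the free module on $B(F)$ under Reidemeister II, III and the framed version of I, which are local computations in a disk on $F$, identical to Kauffman's original bracket calculation. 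Note also that no separating coordinates are needed to tell elements of $B(F)$ apart: the target is by definition the free module on the \emph{set} of isotopy classes, so distinct classes are distinct basis vectors automatically. With (i) and (ii) in place, your retraction argument closes exactly as you describe; without them, the central claim of your proof remains unestablished.
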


\begin{remark}\cite{2006Skein}\label{relative S}
    The same applies to the case of relative Kauffman bracket skein module, where $\left\{x_i\right\}_1^{2n}\subset \partial F\times I$, these fixed framed points are projected onto different endpoints on $\partial F$. For simplicity, we denote $S_{2,\infty}(F\times I;R,A)$ by $S_{2,\infty}(F;R,A)$ and $S_{2,\infty}(F\times I,\left\{x_i\right\}_1^{2n};R,A)$ by $S_{2,\infty}(F,\left\{x_i\right\}_1^{2n};R,A)$.
\end{remark}
In this article, we denote $\Sigma_{0,3}\times I$ by $H_2$, boundary of $\Sigma_{0,3}$ by $a_1\cup a_2\cup a_3$, and two fixed framed points by $u,v$, where $u\in a_1\times I$, $v\in a_3\times I$ (see Figure \ref{_2_3_SSS_SC}).

\begin{definition}
    $S_q$ denotes the Chebyshev polynomials of the second kind, which satisfies the recurrence relation $S_{q+1}(x) =xS_q(x)-S_{q-1}(x)$, with the initial conditions $S_0(x) = 1$ and $S_1(x) = x$. When $n$ is negative, we define $S_{n}(x) = -S_{-n-2}(x)$ for convenience. When $\alpha$ is an element is our skein module, we call the element $S_{q}(\alpha)$ the $q$-th Chebyshev decoration of $\alpha$.

\end{definition}

\begin{proposition}
    $S_{2,\infty}(H_2;R,A)$ is a free module with basis  $\left\{S_{l_1}(a_1)S_{l_2}(a_2)S_{l_3}(a_3)\right\}_{l_i\ge 0}$, where $a_1$, $a_2$, and $a_3$ represent the homotopically nontrivial curves on $\Sigma_{0,3}$ as illustrated in Figure \ref{_2_3_SSS_SC}. The empty link is represented by $S_0(a_1)S_0(a_2)S_0(a_3)$.
    \begin{figure}[H]
    \centering
    \includegraphics[width=1\linewidth]{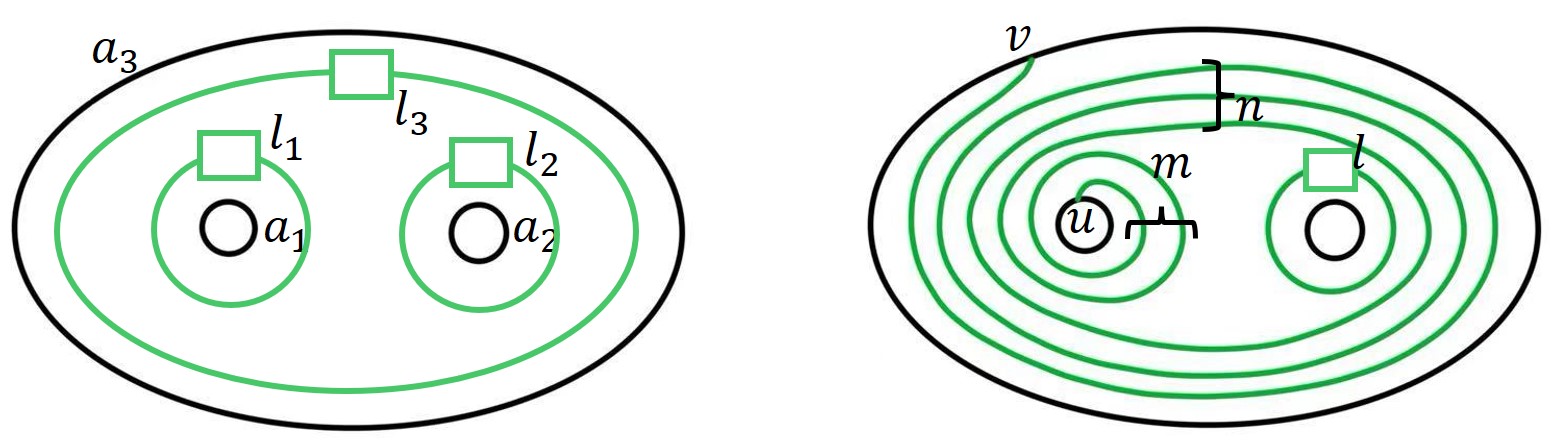}
    \caption{$S_{l_1}(a_1)S_{l_2}(a_2)S_{l_3}(a_3)$ and $S_{l}(a_2)C_{m,n}$}
        \label{_2_3_SSS_SC}
    \end{figure}
    \begin{proof}
    It follows from Theorem \ref{2006skein} since $H_2=\Sigma_{0,3}\times I$.
        
    \end{proof}
\end{proposition}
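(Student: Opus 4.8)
The plan is to reduce the statement to Theorem~\ref{2006skein} through the identification $H_2 = \Sigma_{0,3}\times I$, and then to convert the resulting geometric multicurve basis into the Chebyshev-decorated basis by a triangular change of variables. First I would invoke Theorem~\ref{2006skein}: since $H_2$ is by definition the $I$-bundle $\Sigma_{0,3}\times I$, the module $S_{2,\infty}(H_2;R,A)$ is free with basis $B(\Sigma_{0,3})$, the set of isotopy classes of links in the pair of pants $\Sigma_{0,3}$ having no contractible components, including $\emptyset$.

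The second step is to describe $B(\Sigma_{0,3})$ explicitly. On the three-holed sphere every essential simple closed curve is isotopic to one of the three boundary-parallel curves $a_1,a_2,a_3$, and these three are pairwise disjoint and pairwise non-isotopic. Consequently a link in $\Sigma_{0,3}$ without contractible components is determined up to isotopy by the number $l_i\ge 0$ of parallel copies of each $a_i$ it contains. This gives $B(\Sigma_{0,3}) = \{a_1^{l_1}a_2^{l_2}a_3^{l_3}\}_{l_i\ge 0}$, where $a_i^{l_i}$ denotes $l_i$ parallel copies of $a_i$ and $\emptyset = a_1^{0}a_2^{0}a_3^{0}$.

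The third step is the change of basis. Viewing $S_{2,\infty}(H_2;R,A)$ as an algebra under the stacking product, the $l_i$ parallel copies of $a_i$ coincide with the algebra power $a_i^{l_i}$, since no crossings arise among parallel curves, and the three families commute because they occupy disjoint subsurfaces. As $S_q$ is a monic integer polynomial of degree $q$, the product $S_{l_1}(a_1)S_{l_2}(a_2)S_{l_3}(a_3)$ expands as $a_1^{l_1}a_2^{l_2}a_3^{l_3}$ plus a $\mathbb{Z}$-linear combination of monomials $a_1^{j_1}a_2^{j_2}a_3^{j_3}$ with every $j_i\le l_i$ and at least one strict inequality. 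Ordering multicurves lexicographically by the triple $(l_1,l_2,l_3)$ exhibits the transition matrix between $\{S_{l_1}(a_1)S_{l_2}(a_2)S_{l_3}(a_3)\}$ and $\{a_1^{l_1}a_2^{l_2}a_3^{l_3}\}$ as upper triangular with $1$'s on the diagonal, hence invertible over $R$. Therefore the Chebyshev products also form a basis, with $\emptyset$ corresponding to $S_0(a_1)S_0(a_2)S_0(a_3)$.

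The only point I would regard as the main obstacle is justifying that parallel copies faithfully realize the algebra powers $a_i^{l_i}$ and that the resulting monic-triangular structure genuinely yields an $R$-invertible transition matrix; once the curve classification on $\Sigma_{0,3}$ and the stacking-product interpretation are in hand, the remainder of the argument is formal.
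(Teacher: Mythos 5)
Your proof is correct and follows the same route as the paper, whose entire argument is the one-line citation of Theorem \ref{2006skein} applied to $H_2=\Sigma_{0,3}\times I$; you have simply made explicit the two facts the paper leaves implicit (every essential simple closed curve in the pair of pants is boundary-parallel, and the unitriangular Chebyshev change of basis from the multicurve basis $\{a_1^{l_1}a_2^{l_2}a_3^{l_3}\}$). Both of those standard steps, including the stacking identification of parallel copies with algebra powers, are sound as you state them.
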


\begin{proposition}
     $S_{2,\infty}(H_2,\{u,v\};R,A)$ is a free module with basis $\left\{S_l(a_2)C_{m,n}\right\}_{l\ge 0}$, as shown in Figure \ref{_2_3_SSS_SC}.
    \begin{proof}
        It follows from Remark \ref{relative S}.
    \end{proof}
\end{proposition}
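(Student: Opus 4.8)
The plan is to deduce the statement from the structure theorem for $I$-bundles (Theorem \ref{2006skein}) together with its relative version (Remark \ref{relative S}), and then to make the resulting basis explicit. Since $H_2=\Sigma_{0,3}\times I$ and the framed points $u,v$ lie over distinct boundary points (on $a_1$ and $a_3$), Remark \ref{relative S} tells us that $S_{2,\infty}(H_2,\{u,v\};R,A)$ is a free $R$-module whose basis consists of the isotopy classes of properly embedded relative $1$-manifolds in $\Sigma_{0,3}$ that meet $\partial\Sigma_{0,3}$ exactly at the projections of $u$ and $v$ and contain no contractible components. The whole problem thus reduces to classifying such diagrams on the pair of pants $\Sigma_{0,3}$ and matching them with the family $\{S_l(a_2)C_{m,n}\}$ of Figure \ref{_2_3_SSS_SC}.

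Next I would decompose each such diagram $D$ into its arc part and its closed part. Because $D$ has exactly two endpoints, one on $a_1$ and one on $a_3$, it contains exactly one properly embedded arc $c$ joining $u$ to $v$, and the remaining components are disjoint essential simple closed curves. After isotoping $D$ into minimal position I would cut $\Sigma_{0,3}$ along $c$; since $c$ joins the two distinct boundary circles $a_1$ and $a_3$, cutting merges them into a single boundary circle and leaves $a_2$ untouched, while a short count gives $\chi=\chi(\Sigma_{0,3})+1=0$. The cut surface is therefore an orientable surface with two boundary circles and zero Euler characteristic, i.e. an annulus whose core is isotopic to $a_2$. Consequently every essential closed curve disjoint from $c$ is parallel to $a_2$; in particular no curve parallel to $a_1$ or $a_3$ can appear, since such a curve would separate $u$ from $v$ and hence be forced to cross $c$. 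Thus the closed part of $D$ is exactly $l$ parallel copies of $a_2$ for some $l\geq 0$.

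It then remains to classify the arc $c$ itself, and this is the step I expect to be the main obstacle. Up to isotopy rel the fixed endpoints $u,v$, the properly embedded arcs from $a_1$ to $a_3$ in a pair of pants form a two-parameter family: starting from a fixed base arc, all others are obtained by applying powers of the boundary Dehn twists about $a_1$ and about $a_3$, which records the winding of the arc around these two holes by a pair of integers $(m,n)$. This is the normal-form statement for arcs on a pair of pants, which I would verify either by a direct cut-and-examine argument in the annulus produced above or by lifting to the universal cover; the care needed is to check that twisting about $a_2$ produces nothing genuinely new and that the integers $m,n$ are independent of the count $l$. Identifying these arcs with the drawn family yields $c=C_{m,n}$, so every diagram $D$ is, uniquely, the disjoint union of $l$ parallel copies of $a_2$ with $C_{m,n}$.

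Finally, Theorem \ref{2006skein} furnishes the basis in terms of the actual $l$ parallel copies of $a_2$, so to pass to the stated basis I would invoke the standard unitriangular change of basis in the Chebyshev generators. Writing $(a_2)^{l}$ for $l$ parallel copies, the recurrence for $S_q$ gives $S_l(a_2)=(a_2)^{l}+(\text{lower-order terms})$, so the transition between $\{(a_2)^{l}\}_{l\geq 0}$ and $\{S_l(a_2)\}_{l\geq 0}$ is unitriangular and hence invertible over $R$. Since the arc factor $C_{m,n}$ is uniquely determined and common to both sides of this transformation, invertibility is preserved, and $\{S_l(a_2)C_{m,n}\}_{l\geq 0}$ is therefore again a free basis, as claimed.
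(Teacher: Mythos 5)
Your proposal is correct and follows essentially the same route as the paper: the paper's entire proof is the appeal to Remark \ref{relative S}, and you invoke the same structure theorem and then simply make explicit what the paper leaves implicit, namely that the relative diagrams on $\Sigma_{0,3}$ with one endpoint on $a_1$ and one on $a_3$ are exactly the arcs $C_{m,n}$ together with $l$ parallel copies of $a_2$, followed by the unitriangular Chebyshev change of basis. Your cut-along-the-arc argument and the twist classification of arcs are sound, so this is a correctly fleshed-out version of the paper's one-line proof rather than a different approach.
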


\subsection{Seifert manifolds and its surgery diagram}
Now, we recall the definition of Seifert manifolds.
\begin{definition}
    A Seifert manifold (also a Seifert fibered space) is a closed 3-manifold which can be decomposed into a disjoint union of $S^1$’s (called fibers), such that each tubular neighbourhood of a fiber is a standard fibered torus.
\end{definition}

    We use the notation $\Sigma_{ g,b}((\alpha_1,\beta_1),\cdots,(\alpha_k,\beta_k))$ for this Seifert manifold $M$, where $\Sigma_{ g,b}$ is a orientable compact surface, $g$ is the genus, and $b$ is the number of boundary components. Seifert manifolds can be constructed as follows. Consider a compact connected surface $B$. Let $D_1,\cdots,D_k$ be disjoint disks in the interior of $B$, and $B'$ be the surface obtained from $B$ by removing interior of those disks. Let $M'\to B'$ be the circle bundle with $M'$ orientable. Let $s:B'\to M'$ be a cross section of $M'\to B'$. We can choose a diffeomorphism $\phi$ between each component of $\partial M'$ and a copy of  $S^1\times S^1$ by taking the cross section to $S^1\times\left\{y\right\}$ (slope 0) and a fiber to $\left\{x\right\}\times S^1$ (slope $\infty$). We are assuming the standard fact that each nontrivial circle in $S^1\times S^1$ is isotopic to a unique ‘linear’ circle which lifts to the line $y=(p/q)x$ of slope $p/q$ in the universal cover $\mathbb{R}^2$. From $M'$ we construct a manifold $M$ by attaching $k$ solid tori $D^2\times S^1$ to the torus components $T_i$ of $\partial M'$ lying over $\partial D_i\subset \partial B'$. The attachment is given by diffeomorphisms taking a meridian circle $\partial D^2\times \left\{y\right\}$ of $\partial D^2\times S^1$ to a circle of some finite slope $\beta_i/\alpha_i\in \mathbb{Q}$ in $T_i$. The $k$ slopes $\alpha_i/\beta_i$ determine $M$ uniquely, since once the meridian disk $D^2\times \left\{y\right\}$ is attached to $M'$ there is only one way to fill in a ball to complete the attaching of $D^2\times S^1$. \par When $g=b=0$ and $k=3$, $M$ is called a small Seifert manifold.
    
\begin{remark}
    In this paper, for convenience, we denote
    $D^2(k_1,k_2)=\Sigma_{0,1}((k_1,1),(k_2,1))$, and $S^2(k_1,k_2,k_3)=\Sigma_{0,0}((k_1,1),(k_2,1),(k_3,1))$, $k_i\in \mathbb{Z}$.
\end{remark}

A small Seifert manifold $M$ has two models as shown in Figure \ref{Two models of $S^2(k_1,k_2,k_3)$}. On the left we have the vertical Heegaard splitting of $M$ \cite{Moriah1998IrreducibleHS}, and on the right we have its surgery diagram \cite{akbulut20164}. In Figure \ref{_2_7_Heegaard_Surface_in_Kirby_Diagram}, the Heegaard surface of $M$ is shown, and the blue curve corresponds to the blue curve in 
 the Heegaard splitting in Figure \ref{Two models of $S^2(k_1,k_2,k_3)$}.

\begin{figure}[H]
        \centering
        \includegraphics[width=1\linewidth]{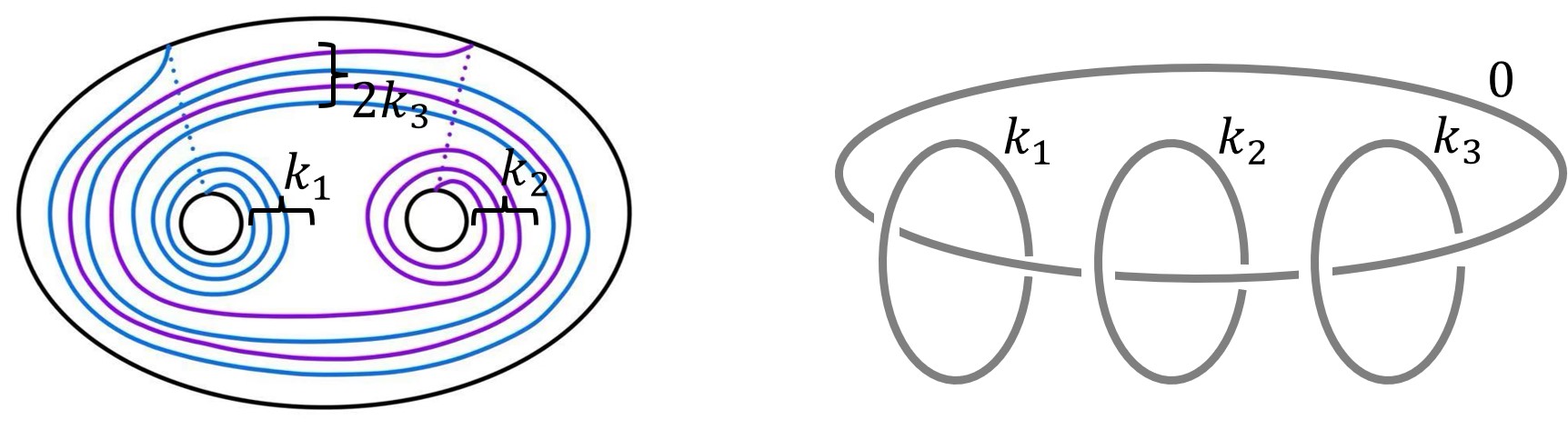}
        \caption{Heegaard splitting and surgery diagram of $S^2(k_1,k_2,k_3)$}
        \label{Two models of $S^2(k_1,k_2,k_3)$}
\end{figure}

\begin{figure}[H]
        \centering
        \includegraphics[width=0.5\linewidth]{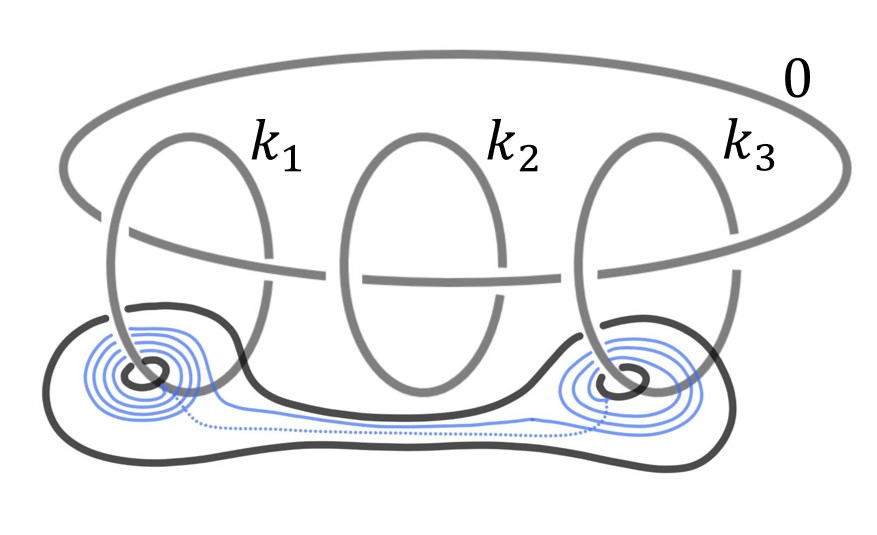}
        \caption{Heegaard surface in surgery diagram of $S^2(k_1,k_2,k_3)$}
        \label{_2_7_Heegaard_Surface_in_Kirby_Diagram}
\end{figure}

\section{Handle sliding submodules of $D^2(k_1,k_2)$ and $S^2(k_1,k_2,k_3)$}
In this section, we will use the surgery diagram to analyze the relation submodules in $S_{2,\infty}(D^2(k_1,k_2))$ and $S_{2,\infty}(S^2(k_1,k_2,k_3))$. Eventually, we will express them in terms of products of Chebyshev decorated boundary parallel curves.
\subsection{The model $R(n_1,n_2)$}
In this subsection, we introduce the model framed link $R(n_1,n_2)$ and study its properties.  Elements in relation submodules of $S_{2,\infty}(D^2(k_1,k_2))$ and $S_{2,\infty}(S^2(k_1,k_2,k_3))$ will share similar properties with $R(n_1,n_2)$.
\begin{definition}
    Let $R(n_1,n_2)$ be a framed link, which is represented by the white bands along the black edges in Figure \ref{Definition of R(n_1,n_2)} and the gray parts indicate the link in $S^3$ where surgeries with integer coefficients $k_i$ are performed. We can envision the construction of  $R(n_1,n_2)$ as follows: Firstly, suppose there exists a plane $\alpha$ (depicted in yellow), and on each side of this plane, the outermost intersection lines of these solid tori with plane $\alpha$ are $\gamma_1$ and $\gamma_2$, respectively, which we designate as longitudes. Secondly, assume that there are two framed links lying on the surfaces of these solid tori, parallel to their boundaries, with slopes $n_1$ and $n_2$ ,intersecting $\gamma_1$ and $\gamma_2$ at a single point, respectively. Then, attach a 1-handle at the intersection points and allow the framed links on the solid tori to connect along the boundaries of this 1-handle, thus constructing $R(n_1,n_2)$.
    \begin{figure}[H]
    \centering
    \includegraphics[width=1\linewidth]{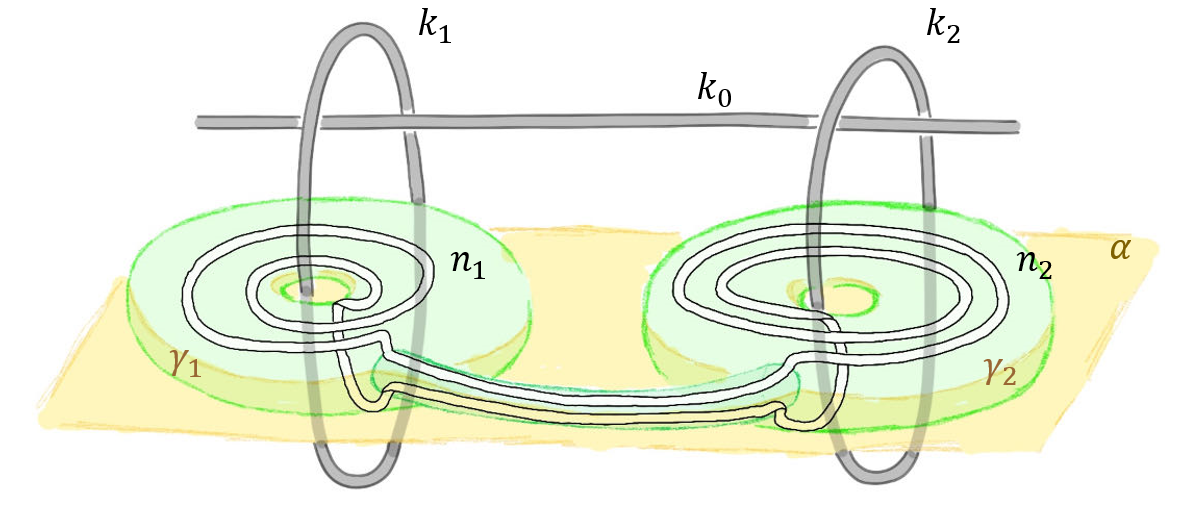}
    \caption{$R(n_1,n_2)$(in this figure, $n_1<0$, $n_2>0$)}
        \label{Definition of R(n_1,n_2)}
    \end{figure}
\end{definition}

\begin{proposition}
    $R(n_1,n_2)=R(-n_1+k_1,-n_2+k_2)$
    \begin{proof}
       As is illustrated in Figure \ref{Proof of R R}.
        \begin{figure}[H]
        \centering
        \includegraphics[width=1\linewidth]{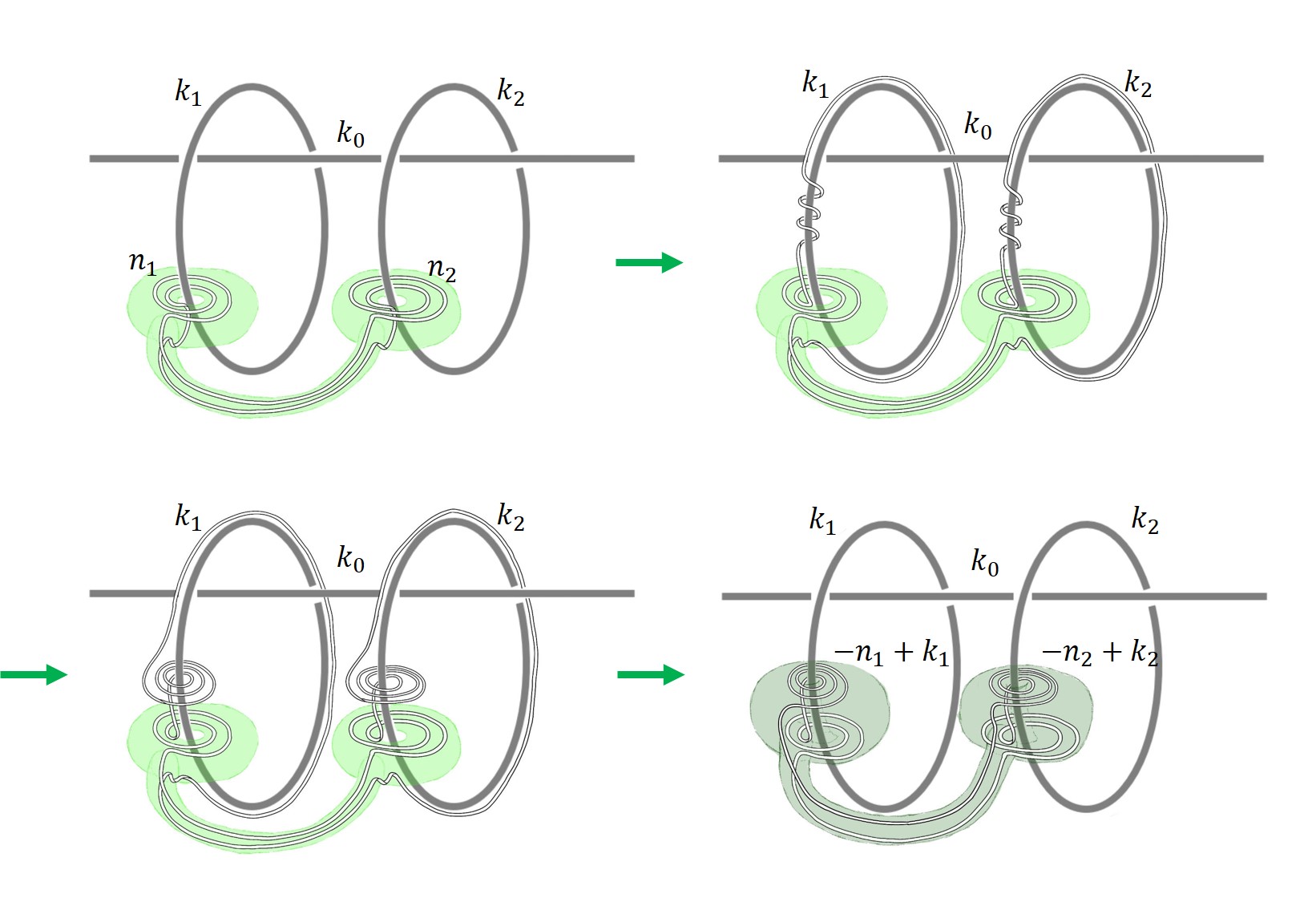}
        \caption{$R(n_1,n_2)=R(-n_1+k_1,-n_2+k_2)$}
        \label{Proof of R R}
        \end{figure}
    \end{proof}
\end{proposition}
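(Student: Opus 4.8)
The plan is to give a diagrammatic proof matching Figure \ref{Proof of R R}, reducing the statement to a local model on a single surgery solid torus. First I would observe that in the construction of $R(n_1,n_2)$ the two solid tori interact only through the $1$-handle, and that the slope carried by one torus can be altered by a move supported in a neighborhood of that torus without disturbing the band on the other side or the $1$-handle itself. Thus it suffices to prove the one-variable statement: on a solid torus carrying integer surgery coefficient $k$, a boundary-parallel framed curve of slope $n$ represents the same skein element as the boundary-parallel curve of slope $-n+k$. The full equality then follows by applying this independently on each side and reconnecting along the $1$-handle, so the real work is entirely in the local model.

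For the local model the idea is to combine the reflection symmetry of the solid torus across the plane $\alpha$ with the effect of the surgery. Reflecting the torus interchanges the two longitudes $\gamma_1,\gamma_2$ and reverses the slope of the band, realizing $n\mapsto -n$; this supplies the orientation-reversing part of the transformation. To certify that the reflected configuration is the same skein element as the original — and not that of the mirror surgery — I would invoke the handle-sliding relations available in the surgered manifold (the Handle Sliding Lemma and Corollary \ref{2.2}): sliding the band over the $k$-framed surgery curve adjusts its framing/slope by the surgery coefficient, which is exactly the additive shift by $k$. Concretely, the passage $n\mapsto -n+k$ is presented as the composite of the reflection $n\mapsto -n$ with the framing correction $(\,\cdot\,)\mapsto(\,\cdot\,)+k$ coming from the $k$-framed curve, and this is precisely the sequence of moves recorded in Figure \ref{Proof of R R}. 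As a built-in consistency check I would confirm that $n\mapsto -n+k$ is an involution, since $-(-n+k)+k=n$; this is forced by the fact that the statement is an equality rather than a directed reduction, and it also guards against a sign slip in the bookkeeping.

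The main obstacle is exactly this bookkeeping: tracking the framing and the orientation of the slope-$n$ band as it is reflected through the plane $\alpha$ and slid over the $k$-framed surgery curve, so that the resulting curve is \emph{literally} identified with the slope-$(-n+k)$ band rather than with some nearby curve of the wrong framing. The delicate point is getting both the sign reversal and the integer shift to assemble into $-n+k$ (and not, say, $n-k$ or $-n-k$), and it is this combinatorics of slope and framing — rather than any deep topological input — that the explicit picture in Figure \ref{Proof of R R} is designed to settle.
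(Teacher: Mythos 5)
There is a genuine gap, and it sits in your very first reduction. You claim that the passage $n_i\mapsto -n_i+k_i$ ``can be altered by a move supported in a neighborhood of that torus without disturbing the band on the other side or the $1$-handle itself,'' so that it suffices to prove a one-variable local statement and then apply it \emph{independently} on each side. The one-torus statement is true for a \emph{closed} boundary-parallel curve (it is the familiar unoriented symmetry $c_n\simeq c_{-n+k}$ of such knots in the surgered solid-torus region), but the isotopy realizing it necessarily sweeps the curve through the side of the torus complementary to the surgery solid torus --- and for $R(n_1,n_2)$ that side contains the $1$-handle and the other torus, so the move drags the entire band along and reflects \emph{both} slopes at once. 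It cannot be localized: fix the $1$-handle and the torus-$2$ portion; the torus-$1$ portion is then an arc rel its endpoints, and any isotopy supported in a neighborhood of torus $1$ together with the reglued solid torus $V_1$ preserves the winding number of that arc around the core of $V_1$, whereas the slope-$n_1$ and slope-$(-n_1+k_1)$ arcs have winding numbers differing by a nonzero multiple of $2n_1-k_1$ in general. Since the band is a single connected curve, the ``reflection'' part of the move reverses how the whole band runs around the configuration; there is no way to flip one coordinate while freezing the other.

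The independence you assume is not merely unproved --- it contradicts the paper. If the one-sided move were available, the handle-sliding submodule would contain relators of the form $R_{12}(n_1,n_2,n_3)-R_{12}(-n_1+k_1,n_2,n_3)$ in addition to those of Theorem \ref{Theorem 4.1} and Corollary \ref{D}, which flip two indices simultaneously and never one. Concretely, take $k_1=k_2=2$: Lemma \ref{Lemma 3.2} gives $R(0,0)=-A^{2}-A^{-2}$, while Proposition \ref{Proposition 3.2} (with $S_{-1}=0$, $S_{-2}=-1$) gives $R(2,0)=-A^{-4}s_1^{2}+1$, so your independent move $R(0,0)=R(2,0)$ would force $s_1^{2}=A^{2}+A^{4}+A^{6}$ in $S_{2,\infty}(D^2(2,2))$ --- impossible, because Theorem \ref{thm_D^2} exhibits $1$ and $s_1^{2}$ as distinct elements of the free basis $G^{2,2}$. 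The paper's proof (the isotopy recorded in Figure \ref{Proof of R R}, which is exactly the single handle slide along $\beta_1\cup\beta_2$ invoked in the proof of Theorem \ref{Theorem 4.1}) is one \emph{global} move carrying the whole band across the surgered tori; your per-torus bookkeeping --- reflection composed with the $+k_i$ framing shift, plus the involution check --- correctly describes the local effect of that move, but the two local effects are only available simultaneously, and that coupling is the actual content of the proposition.
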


For convenience, we denote the elements on the left-hand side of Figure \ref{a1 a2 b S} as $a_1$, $a_2$ and $b$, respectively, and the elements on the right-hand side as $S_{l_1}(a_1)S_{l_2}(a_2)R(n_1,n_2)$.
\begin{figure}[H]
        \centering
        \includegraphics[width=1\linewidth]{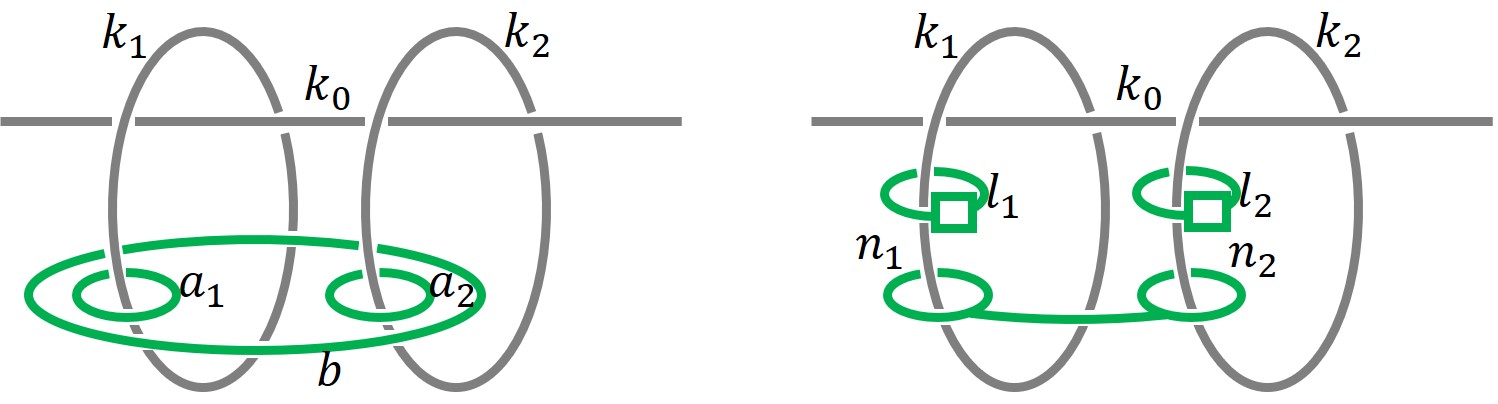}
        \caption{$a_1$, $a_2$, $b$ (left) and $S_{l_1}(a_1)S_{l_2}(a_2)R(n_1,n_2)$ (right)}
        \label{a1 a2 b S}
\end{figure}

\begin{lemma}
    \begin{align}
        &R(n_1,n_2)=A^{- l_1}S_{l_1}(a_1)R(n_1- l_1,n_2)-A^{- l_1- 1}S_{l_1-1}(a_1)R(n_1- l_1- 1,n_2),\nonumber\\&R(n_1,n_2)=A^{+ l_1}S_{l_1}(a_1)R(n_1+l_1,n_2)-A^{+ l_1+ 1}S_{l_1-1}(a_1)R(n_1+ l_1+ 1,n_2),\nonumber\\&R(n_1,n_2)=A^{- l_2}S_{l_2}(a_2)R(n_1,n_2- l_2)-A^{- l_2- 1}S_{l_2- 1}(a_2)R(n_1,n_2-l_2- 1),\nonumber\\&R(n_1,n_2)=A^{+ l_2}S_{l_2}(a_2)R(n_1,n_2+ l_2)-A^{+ l_2+ 1}S_{l_2- 1}(a_2)R(n_1,n_2+l_2+ 1),\nonumber
    \end{align}
    where $l_i\ge 0$.
    \begin{proof}
        As shown in Figure \ref{Skein_relation_of_R}, we have
        \begin{align}
            &R(n_1,n_2)=A^{-1}a_1R(n_1-1,n_2)-A^{-2}R(n_1-2,n_2),\nonumber\\
            &R(n_1,n_2)=A^{+1}a_1R(n_1+1,n_2)-A^{+2}R(n_1+2,n_2).\nonumber
        \end{align}
        Therefore,
        \begin{align}
        R(n_1,n_2) & = A^{\pm 1}a_1R(n_1\pm 1,n_2)-A^{-2}R(n_1\pm 2,n_2)\nonumber\\
         & = A^{\pm 1}a_1\left [A^{\pm 1}a_1R(n_1\pm 2,n_2)-A^{\pm 2}R(n_1\pm 3,n_2)\right ] -A^{\pm 2}R(n_1\pm 2,n_2)\nonumber\\
         & = A^{\pm 2}S_2(a_1)R(n_1\pm 2,n_2)-A^{\pm 3}S_1(a_1)R(n_1\pm 3,n_2)\nonumber\\
         & = A^{\pm 3}S_3(a_1)R(n_1\pm 3,n_2)-A^{\pm 4}S_2(a_1)R(n_1\pm 4,n_2)\nonumber\\
         & = ......\nonumber\\
         & =A^{\pm l_1}S_{l_1}(a_1)R(n_1\pm l_1,n_2)-A^{\pm l_1\pm 1}S_{l_1-1}(a_1)R(n_1\pm l_1\pm 1,n_2).\nonumber
         \end{align}
        Other equations follows in a similar manner.
        \begin{figure}[H]
        \centering
        \includegraphics[width=1\linewidth]{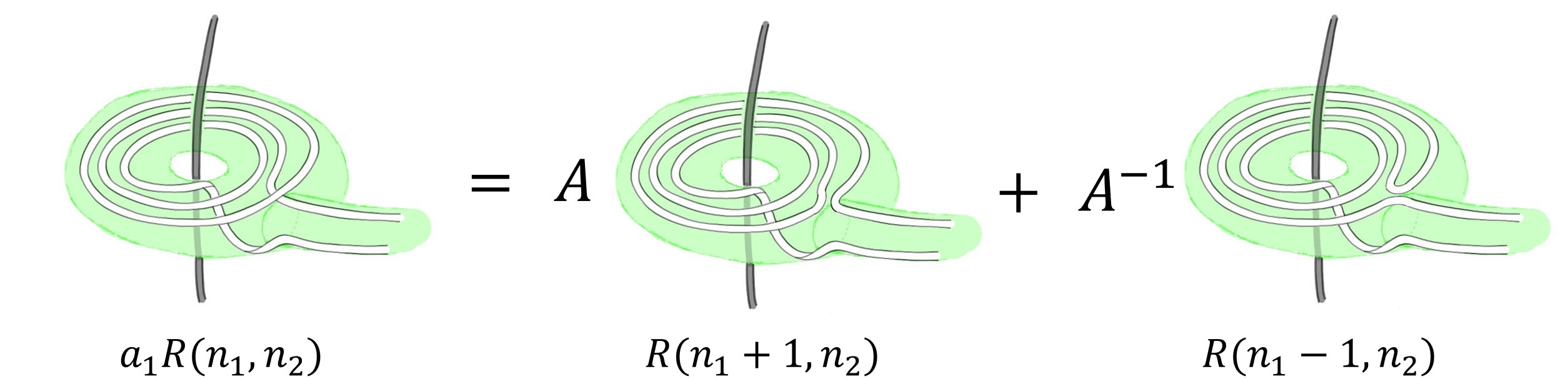}
        \caption{Skein relation of $R(n_1,n_2)$}
        \label{Skein_relation_of_R}
        \end{figure}
    \end{proof}
    \label{Lemma 3.1}
\end{lemma}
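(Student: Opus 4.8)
The plan is to reduce the four identities to two elementary one-step skein relations and then promote these to the stated closed forms by induction, the Chebyshev polynomials $S_q$ appearing precisely because their defining three-term recurrence matches the recursion produced by iterated skein resolution.

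First I would establish the base cases. Placing a boundary-parallel copy $a_1$ of the core of the first solid torus alongside the strand of $R(n_1,n_2)$ lying on that torus produces a single crossing; applying the Kauffman bracket skein relation $L_+-AL_0-A^{-1}L_\infty$ at that crossing resolves $a_1 R(n_1,n_2)$ into a combination of $R$'s whose slopes on the first torus differ by one. After accounting for the framing contribution this gives the two one-step relations
\begin{align*}
R(n_1,n_2) &= A^{-1}a_1 R(n_1-1,n_2) - A^{-2}R(n_1-2,n_2),\\
R(n_1,n_2) &= A^{+1}a_1 R(n_1+1,n_2) - A^{+2}R(n_1+2,n_2),
\end{align*}
which are exactly the $l_1=1$ instances of the first two identities, since $S_1=x$ and $S_0=1$.

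Next I would induct on $l_1$. Assuming the first identity for some $l_1\ge 1$, I substitute the one-step relation for $R(n_1-l_1,n_2)$ into the leading term and collect the coefficient of $R(n_1-l_1-1,n_2)$. That coefficient becomes $A^{-l_1-1}\bigl(a_1 S_{l_1}(a_1)-S_{l_1-1}(a_1)\bigr)$, which by the recurrence $S_{l_1+1}(x)=xS_{l_1}(x)-S_{l_1-1}(x)$ equals $A^{-(l_1+1)}S_{l_1+1}(a_1)$; the leftover term is $-A^{-(l_1+1)-1}S_{l_1}(a_1)R(n_1-l_1-2,n_2)$. This is precisely the $l_1+1$ instance, closing the induction. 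The identical computation using the $A^{+1}$ one-step relation proves the second identity, and repeating the whole argument with the core $a_2$ of the second solid torus — whose local geometry is symmetric to that of the first — yields the third and fourth identities.

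The only genuinely delicate point is the geometric verification of the base skein relations: one must read off from the diagram that resolving the single crossing between $a_1$ and $R(n_1,n_2)$ changes the slope by exactly one in each resolution, and that the framing and writhe bookkeeping produces the stated exponents $A^{\mp 1}$ and $A^{\mp 2}$ rather than some other powers. Once these base cases are pinned down, the rest is the purely formal telescoping above, which is guaranteed to succeed because the coefficients are organized to satisfy the Chebyshev three-term recurrence.
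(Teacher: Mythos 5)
Your proposal is correct and takes essentially the same route as the paper: both derive the one-step relations $R(n_1,n_2)=A^{\mp 1}a_1R(n_1\mp 1,n_2)-A^{\mp 2}R(n_1\mp 2,n_2)$ from resolving the single crossing in the diagram, and then iterate, with the Chebyshev recurrence $S_{l+1}(x)=xS_l(x)-S_{l-1}(x)$ absorbing the coefficients (the paper writes this iteration as explicit repeated substitution, while you package the identical computation as an induction on $l_1$). The remaining identities follow by the same symmetry in $a_2$ in both treatments, so there is nothing to correct.
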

We have the following calculation.
\begin{lemma}
    \begin{align}
        &R(\pm 1,0)=-A^{\mp 3}a_1,&&R(0,\pm 1)=-A^{\mp 3}a_2,\nonumber\\&R(0,0)=\bigcirc=-A^2-A^{-2},&&R(\pm 1,\mp 1)=b.\nonumber
    \end{align}
    \label{Lemma 3.2}
\end{lemma}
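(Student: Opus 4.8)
The plan is to read each of these four identities directly off the diagram of $R(n_1,n_2)$ in Figure \ref{Definition of R(n_1,n_2)}, isotoping the framed link inside the complementary genus-two handlebody $H_2$ and accounting for the framing changes through the Kauffman bracket relation. The only two skein facts the computation needs are that a positively (resp.\ negatively) curled strand can be straightened at the cost of a factor $-A^{3}$ (resp.\ $-A^{-3}$), and that a trivial unknot evaluates to $-A^{2}-A^{-2}$.

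First I would treat $R(0,0)$. When both slopes vanish the two framed arcs lie along the longitudes $\gamma_1,\gamma_2$ and are joined only by the band of the $1$-handle; I would check that the resulting closed curve bounds an embedded disk in $H_2$, so it is the trivial knot $\bigcirc$. Evaluating $\bigcirc=-A^{2}-A^{-2}$ then gives the third identity.

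Next I would compute $R(1,0)$. Here the arc on the first solid torus wraps once in the meridian direction while the arc on the second torus stays longitudinal, so after pushing the wrap off the torus the underlying curve is isotopic to $a_1$ carrying a single curl; straightening the curl produces the factor $-A^{-3}$ and yields $R(1,0)=-A^{-3}a_1$. The companion identity $R(-1,0)=-A^{3}a_1$ I would then obtain for free: substituting $R(0,0)=-A^{2}-A^{-2}$ and $R(1,0)=-A^{-3}a_1$ into the one-step skein relation of Lemma \ref{Lemma 3.1}, namely $R(n_1,n_2)=Aa_1R(n_1+1,n_2)-A^{2}R(n_1+2,n_2)$ at $(n_1,n_2)=(-1,0)$, forces $R(-1,0)=-A^{3}a_1$; this also serves as an internal consistency check. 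The two identities $R(0,\pm1)=-A^{\mp3}a_2$ then follow from an entirely symmetric argument with the roles of the two solid tori interchanged.

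Finally, for $R(\pm1,\mp1)$ I would argue that the $+1$ wrap on one torus together with the $-1$ wrap on the other lets the curve thread through the $1$-handle exactly once, so that it is isotopic to $b$; crucially, the two oppositely handed curls contribute framing factors $-A^{3}$ and $-A^{-3}$ that cancel, leaving $R(\pm1,\mp1)=b$ with no coefficient. I expect the main obstacle to be precisely this framing bookkeeping: one must orient and track the diagram carefully enough to be certain that slope $+1$ produces the curl giving $-A^{-3}$ rather than the opposite sign, and that in the $b$ case the two curls are genuinely of opposite handedness so that they cancel. Unlike the $R(\pm1,0)$ pair, the $R(\pm1,\mp1)$ identities cannot be bootstrapped from $R(0,0)$ through Lemma \ref{Lemma 3.1} without first knowing a second base value such as $R(\pm1,\pm1)$, so this last case has to be established directly from the diagram.
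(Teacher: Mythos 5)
Your proposal is correct and is essentially the paper's own (unwritten) argument: the paper presents Lemma \ref{Lemma 3.2} as a direct calculation from the diagram of $R(n_1,n_2)$, exactly the isotopy-plus-framing bookkeeping you describe, using the curl factors $-A^{\pm 3}$ and the unknot value $-A^{2}-A^{-2}$, and one can confirm all four identities against Proposition \ref{Proposition 3.2} with the convention $S_{-1}=0$, $S_{-2}=-S_{0}$. Your recursion-based consistency check for $R(-1,0)$ and your observation that $R(\pm 1,\mp 1)$ must be verified directly from the diagram (it is where the generator $b$ first enters, so it cannot be bootstrapped from the $a_i$-recursions alone) are both sound.
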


\begin{proposition}
    \begin{align}
        R(n_1,n_2)=&-A^{-n_1-n_2-2}S_{n_1}(a_1)S_{n_2}(a_2)-A^{-n_1-n_2+2}S_{n_1-2}(a_1)S_{n_2-2}(a_2)\nonumber\\&-A^{-n_1-n_2}S_{n_1-1}(a_1)S_{n_2-1}(a_2)b.\nonumber
    \end{align}
    This equation is illustrated in Figure \ref{_3_5_Proposition_3_2}.
    \begin{figure}[H]
        \centering
        \includegraphics[width=1\linewidth]{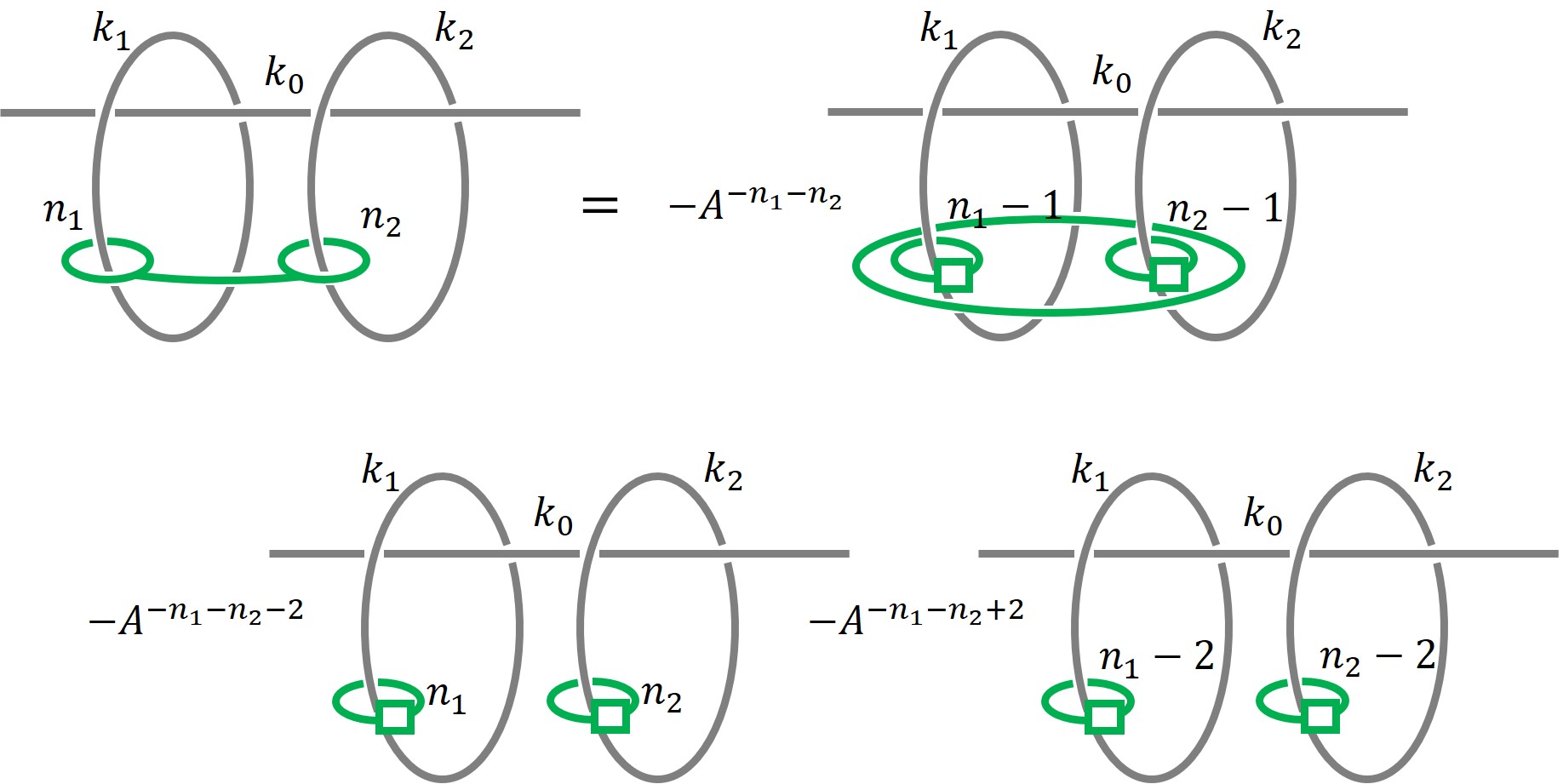}
        \caption{Calculation of $R(n_1,n_2)$}
        \label{_3_5_Proposition_3_2}
    \end{figure}
    \begin{proof}
    \ \\
    \begin{enumerate}
        \item $n_1=n_2=0$, it is obivious.
        \item $n_1>0,n_2\ge 0$,
        \begin{align}
            R(n_1,n_2)\xlongequal{Lemma\ \ref{Lemma 3.1}}&A^{-n_1+1}S_{n_1-1}(a_1)R(1,n_2)-A^{-n_1}S_{n_1-2}(a_1)R(0,n_2)\nonumber\\\xlongequal{Lemma\ \ref{Lemma 3.1}}&A^{-n_1+1}S_{n_1-1}(a_1)\left[A^{-n_2}S_{n_2}(a_2)R(1,0)-A^{-n_2-1}S_{n_2-1}(a_2)R(1,-1)\right]\nonumber\\&-A^{-n_1}S_{n_1-2}(a_1)\left[A^{-n_2}S_{n_2}(a_2)R(0,0)-A^{-n_2-1}S_{n_2-1}(a_2)R(0,-1)\right]\nonumber\\\xlongequal{Lemma\ \ref{Lemma 3.2}}&-A^{-n_1-n_2-2}S_{n_1-1}(a_1)S_{n_2}(a_2)a_1-A^{-n_1-n_2+2}S_{n_1-2}(a_1)S_{n_2-1}(a_2)a_2\nonumber\\&+A^{-n_1-n_2-2}S_{n_1-2}(a_1)S_{n_2}(a_2)+A^{-n_1-n_2+2}S_{n_1-2}(a_1)S_{n_2}(a_2)\nonumber\\&-A^{-n_1-n_2}S_{n_1-1}(a_1)S_{n_2-1}(a_2)b\nonumber\\=&-A^{-n_1-n_2-2}S_{n_1}(a_1)S_{n_2}(a_2)-A^{-n_1-n_2+2}S_{n_1-2}(a_1)S_{n_2-2}(a_2)\nonumber\\&-A^{-n_1-n_2}S_{n_1-1}(a_1)S_{n_2-1}(a_2)b.\nonumber
        \end{align}
        Other cases follow similarly.
    \end{enumerate}
     
    \end{proof}
    \label{Proposition 3.2}
\end{proposition}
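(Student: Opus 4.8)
The plan is to reduce both indices of $R(n_1,n_2)$ down to the finite set of base values computed in Lemma \ref{Lemma 3.2} by iterating the four skein recurrences of Lemma \ref{Lemma 3.1}, and then to resum the resulting expressions using the three-term Chebyshev relation $xS_q(x)=S_{q+1}(x)+S_{q-1}(x)$. The case $n_1=n_2=0$ is immediate from Lemma \ref{Lemma 3.2}. For the general case I would first strip the $a_1$-dependence: assuming $n_1>0$, apply the first identity of Lemma \ref{Lemma 3.1} with $l_1=n_1-1$ to write $R(n_1,n_2)=A^{-n_1+1}S_{n_1-1}(a_1)R(1,n_2)-A^{-n_1}S_{n_1-2}(a_1)R(0,n_2)$, so that the first argument of every remaining $R$ lies in $\{0,1\}$.

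Next I would strip the $a_2$-dependence in the same way: for $n_2\ge 0$, apply the third identity of Lemma \ref{Lemma 3.1} with $l_2=n_2$ to each of $R(1,n_2)$ and $R(0,n_2)$, pushing the second argument into $\{-1,0\}$. This leaves only the four base terms $R(1,0)$, $R(1,-1)$, $R(0,0)$, $R(0,-1)$, which Lemma \ref{Lemma 3.2} evaluates as $-A^{-3}a_1$, $b$, $-A^2-A^{-2}$, and $-A^{3}a_2$ respectively. Substituting these produces a linear combination of the monomials $S_{n_1-1}(a_1)S_{n_2}(a_2)\,a_1$, $S_{n_1-2}(a_1)S_{n_2-1}(a_2)\,a_2$, $S_{n_1-2}(a_1)S_{n_2}(a_2)$, and $S_{n_1-1}(a_1)S_{n_2-1}(a_2)\,b$.

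The third step is the resummation: applying $a_1S_{n_1-1}(a_1)=S_{n_1}(a_1)+S_{n_1-2}(a_1)$ and $a_2S_{n_2-1}(a_2)=S_{n_2}(a_2)+S_{n_2-2}(a_2)$ collapses the $a_1$- and $a_2$-multiplied terms, cancels the leftover $S_{n_1-2}(a_1)S_{n_2}(a_2)$ contributions, and leaves exactly the three terms of the claimed formula. The remaining sign combinations ($n_1<0$, or $n_2<0$, or both) are handled identically, using instead the second and fourth identities of Lemma \ref{Lemma 3.1} to push the arguments up toward $\{0,1\}$ from below.

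I expect the main obstacle to be organizational rather than conceptual: keeping the bookkeeping of $A$-exponents and signs consistent across all four quadrants of $(n_1,n_2)$, and confirming that the closed form stays valid for negative indices, where one must invoke the convention $S_n(x)=-S_{-n-2}(x)$. A cleaner route that sidesteps the case analysis is to verify directly that the right-hand side $F(n_1,n_2)$ of the claimed identity satisfies the same recurrences $F(n_1,n_2)=A^{\mp 1}a_1F(n_1\mp 1,n_2)-A^{\mp 2}F(n_1\mp 2,n_2)$ that $R(n_1,n_2)$ obeys by Lemma \ref{Lemma 3.1} (this is once more just the Chebyshev relation), and then to induct on $|n_1|+|n_2|$ with the values of Lemma \ref{Lemma 3.2} as the base cases; since $R$ and $F$ satisfy identical recurrences and agree on the base, they agree for all $(n_1,n_2)$.
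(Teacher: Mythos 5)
Your proposal is correct and follows essentially the same route as the paper's proof: reduce both indices of $R(n_1,n_2)$ via the recurrences of Lemma \ref{Lemma 3.1} (with $l_1=n_1-1$ and $l_2=n_2$) down to the base values $R(1,0)$, $R(1,-1)$, $R(0,0)$, $R(0,-1)$ of Lemma \ref{Lemma 3.2}, then resum with the Chebyshev relation $xS_q(x)=S_{q+1}(x)+S_{q-1}(x)$ and handle the remaining sign quadrants symmetrically. Your closing alternative --- checking that the claimed right-hand side satisfies the same recurrences and inducting on $|n_1|+|n_2|$ --- is a tidy repackaging of the same computation rather than a genuinely different argument, and either version is sound.
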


\subsection{Relators on $S_{2,\infty}(D^2(k_1,k_2))$ and $S_{2,\infty}(S^2(k_1,k_2,k_3))$}\ \\

First, we introduce some notations.
\begin{definition}
    $R_{13}(n_1,n_2,n_3)$, $R_{23}(n_1,n_2,n_3)$ and $R_{12}(n_1,n_2,n_3)$ are as shown in the Figure \ref{_4_3_Def_of_R}.
    \begin{figure}[H]
        \centering
        \includegraphics[width=0.7\linewidth]{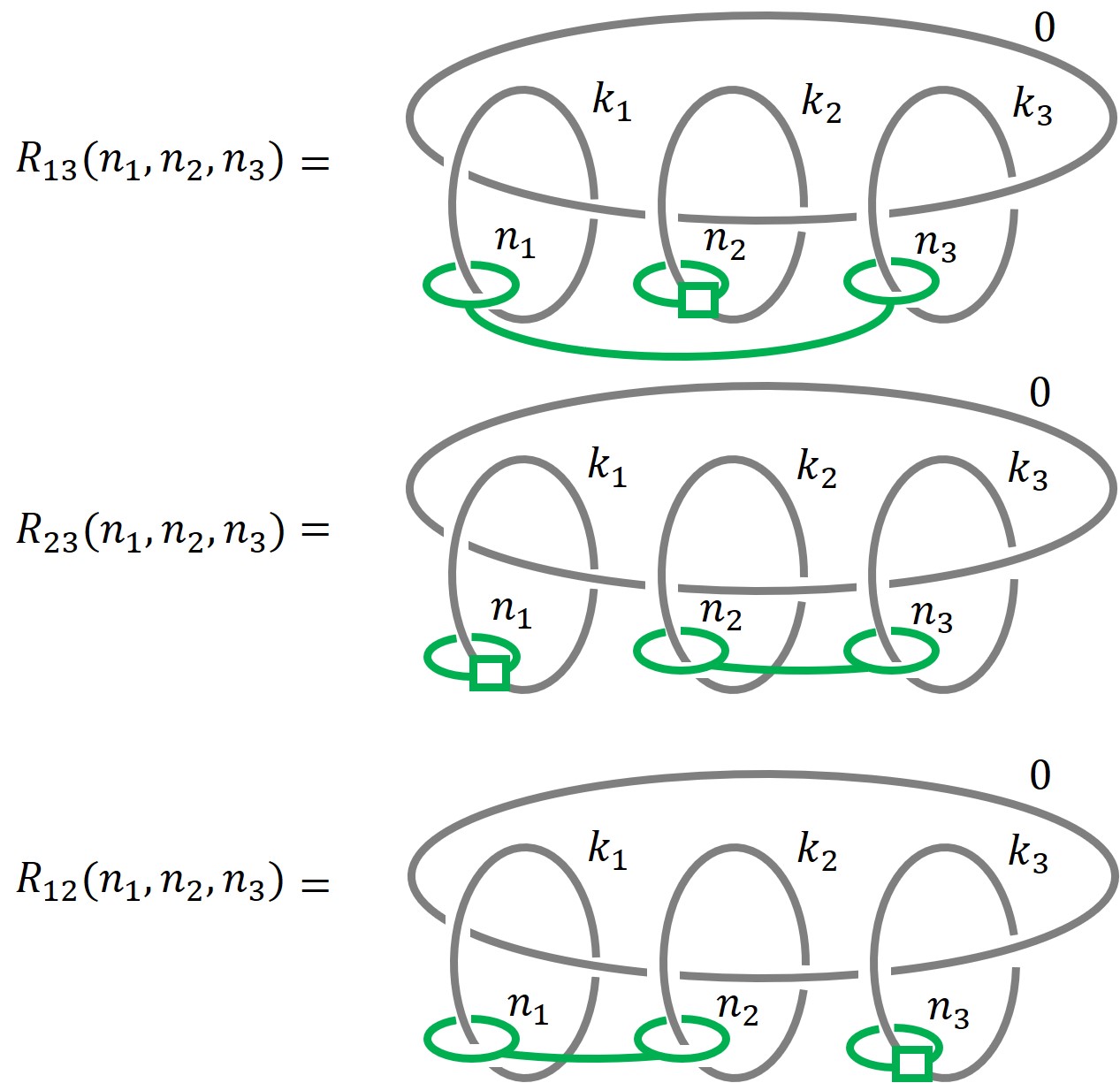}
        \caption{$R_{13}(n_1,n_2,n_3)$, $R_{23}(n_1,n_2,n_3)$ and $R_{12}(n_1,n_2,n_3)$}
        \label{_4_3_Def_of_R}
    \end{figure}
\end{definition}

\begin{remark}
    We abbreviate $S_{n}(a_i)$ by $s_i^{n}$ for convenient.
\end{remark}

\begin{lemma}
    \begin{align}
        R_{13}(n_1,n_2,n_3)=&-A^{-n_1-n_3-2}s_1^{n_1}s_2^{n_2}s_3^{n_3}-A^{-n_1-n_3+2}s_1^{n_1-2}s_2^{n_2}s_3^{n_3-2}\nonumber\\&-A^{-n_1-n_3}s_1^{n_1-1}s_2^{n_2+1}s_3^{n_3-1}-A^{-n_1-n_3}s_1^{n_1-1}s_2^{n_2-1}s_3^{n_3-1},\nonumber\\
        R_{23}(n_1,n_2,n_3)=&-A^{-n_2-n_3-2}s_1^{n_1}s_2^{n_2}s_3^{n_3}-A^{-n_2-n_3+2}s_1^{n_1}s_2^{n_2-2}s_3^{n_3-2}\nonumber\\&-A^{-n_2-n_3}s_1^{n_1+1}s_2^{n_2-1}s_3^{n_3-1}-A^{-n_2-n_3}s_1^{n_1-1}s_2^{n_2-1}s_3^{n_3-1},\nonumber\\R_{12}(n_1,n_2,n_3)=&-A^{-n_1-n_2-2}s_1^{n_1}s_2^{n_2}s_3^{n_3}-A^{-n_1-n_2+2}s_1^{n_1-2}s_2^{n_2-2}s_3^{n_3}\nonumber\\&-A^{-n_1-n_2}s_1^{n_1-1}s_2^{n_2-1}s_3^{n_3+1}-A^{-n_1-n_2}s_1^{n_1-1}s_2^{n_2-1}s_3^{n_3-1}.\nonumber
    \end{align}
    \begin{proof}
        \begin{align}
            R_{12}(n_1,n_2,n_3)\xlongequal{Proposition\ \ref{Proposition 3.2}}&-A^{-n_1-n_2-2}s_1^{n_1}s_2^{n_2}s_3^{n_3}-A^{-n_1-n_2+2}s_1^{n_1-2}s_2^{n_2-2}s_3^{n_3}\nonumber\\&-A^{-n_1-n_2}s_1^{n_1-1}s_2^{n_2-1}s_3^{n_3}b\nonumber\\\xlongequal{b=s_3}&-A^{-n_1-n_2-2}s_1^{n_1}s_2^{n_2}s_3^{n_3}-A^{-n_1-n_2+2}s_1^{n_1-2}s_2^{n_2-2}s_3^{n_3}\nonumber\\&-A^{-n_1-n_2}s_1^{n_1-1}s_2^{n_2-1}s_3^{n_3+1}-A^{-n_1-n_2}s_1^{n_1-1}s_2^{n_2-1}s_3^{n_3-1}.\nonumber
        \end{align}
        For other cases, the proof is similar.
    \end{proof}
\end{lemma}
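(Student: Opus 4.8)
The plan is to recognize each $R_{ij}(n_1,n_2,n_3)$ as the model link of Subsection 3.1 placed on two of the three handles and decorated on the third. Concretely, $R_{ij}(n_1,n_2,n_3)$ should be a copy of $R(n_i,n_j)$ supported on handles $i$ and $j$, together with the Chebyshev-decorated boundary-parallel curve $s_k^{n_k}=S_{n_k}(a_k)$ running over the remaining handle $k$, where $\{i,j,k\}=\{1,2,3\}$. Once this is set up, the entire computation reduces to Proposition \ref{Proposition 3.2} applied to the $R(n_i,n_j)$ factor, followed by a single geometric identification and one application of the Chebyshev product rule. This is exactly the pattern already visible in the $R_{12}$ case, so I would organize the argument to make the symmetry between the three cases explicit.

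First I would expand the $R(n_i,n_j)$ factor using Proposition \ref{Proposition 3.2}, which writes it as a sum of three terms: the two ``diagonal'' terms $-A^{-n_i-n_j-2}s_i^{n_i}s_j^{n_j}$ and $-A^{-n_i-n_j+2}s_i^{n_i-2}s_j^{n_j-2}$, together with a third term $-A^{-n_i-n_j}s_i^{n_i-1}s_j^{n_j-1}b$ carrying the connecting band $b$ of the model. Multiplying through by the decoration $s_k^{n_k}$ on the remaining handle leaves the first two terms unchanged, and they already match two of the four summands of the claimed formula; the third term becomes $-A^{-n_i-n_j}s_i^{n_i-1}s_j^{n_j-1}\,b\,s_k^{n_k}$.

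The decisive step is then the identification $b=s_k=S_1(a_k)$: in the three-handle picture the connecting band $b$ of the model $R$ is isotopic to the boundary-parallel curve $a_k$ over the remaining handle. Granting this, the product-to-sum rule for Chebyshev polynomials, $S_1(a_k)S_{n_k}(a_k)=a_kS_{n_k}(a_k)=S_{n_k+1}(a_k)+S_{n_k-1}(a_k)$, i.e. $s_k^{1}s_k^{n_k}=s_k^{n_k+1}+s_k^{n_k-1}$, splits the third term into precisely the two remaining summands, with the $+1$ and $-1$ index shifts landing on the $k$-th slot. Running this with $(i,j,k)=(1,2,3)$ produces the stated formula for $R_{12}$, while the same computation with the indices permuted gives $R_{13}$ (shift on $a_2$) and $R_{23}$ (shift on $a_1$), which is why the three formulas differ only by a relabeling.

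I expect the only non-formal input to be the geometric identification $b=s_k$ in each of the three placements, together with bookkeeping of the framing and sign conventions so that the exponents of $A$ come out as written; everything downstream of that is routine algebra in the Chebyshev basis. Accordingly, I would verify the identification carefully from the surgery diagram for the $R_{12}$ configuration and then appeal to the evident symmetry of the three-handle picture under permuting the handles to obtain the $R_{13}$ and $R_{23}$ cases, rather than redrawing each diagram separately.
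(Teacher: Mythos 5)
Your proposal matches the paper's proof exactly: the paper likewise expands via Proposition \ref{Proposition 3.2}, invokes the identification $b=s_3$ (in general $b=s_k$ on the remaining handle), and splits the resulting term $-A^{-n_i-n_j}s_i^{n_i-1}s_j^{n_j-1}s_k^{n_k}b$ using the Chebyshev recurrence $a_kS_{n_k}(a_k)=S_{n_k+1}(a_k)+S_{n_k-1}(a_k)$, treating the $R_{13}$ and $R_{23}$ cases as identical up to permuting the handles. Your correct observation that the index shift lands on $a_2$ for $R_{13}$ and on $a_1$ for $R_{23}$ is consistent with the stated formulas, so the argument is complete and in essence the same as the paper's.
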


\begin{theorem}
    $S_{2,\infty}(S^2(k_1,k_2,k_3))=S_{2,\infty}(H_2)/\mathcal{J}_{13}+\mathcal{J}_{23}$, where $S_{2,\infty}(H_2)$ is a free module generated by $\left \{ s_1^{l_1}s_2^{l_2}s_3^{l_3}\right\}_{l_i\ge0 }$, $\mathcal{J}_{13}$ and $\mathcal{J}_{23}$ are submodules of $S_{2,\infty}(H_2)$ generated by $\left \{R_{13}^{n_1,n_2,n_3}\right\}_{n_i\in \mathbb{Z}}$ and $\left \{R_{23}^{n_1,n_2,n_3}\right\}_{n_i\in \mathbb{Z}}$ respectively. Where
    \begin{align}
        &R_{13}^{n_1,n_2,n_3}=R_{13}(n_1,n_2,n_3)-R_{13}(-n_1+k_1,n_2,-n_3+k_3),\nonumber\\
        &R_{23}^{n_1,n_2,n_3}=R_{23}(n_1,n_2,n_3)-R_{23}(n_1,-n_2+k_2,-n_3+k_3).\nonumber
    \end{align}
    \begin{proof}
        The Heegaard surface of this manifold is shown in Figure \ref{_2_7_Heegaard_Surface_in_Kirby_Diagram}. We only consider the case of handle sliding by the blue curve. From Figure \ref{_4_4_Handle_Sliding}, we observe the changes of this curve after sliding along $\beta_1\cup \beta_2$. Each sliding is equivalent to the transformation in the surgery diagram as stated in Proposition 3.1. Furthermore, due to Proposition 2.2, if we choose $S_{n_2}(a_2)C_{n_1,n_3}$ as shown in the Figure \ref{_2_3_SSS_SC}, $S_{n_2}(a_2)C_{n_1,n_3}$ is a basis. Notice that  $S_{n_2}(a_2)C_{n_1,n_3}\cup \beta_1=R_{13}(n_1,n_2,n_3)$ and $S_{n_2}(a_2)C_{n_1,n_3}\cup \beta_2=R_{13}(-n_1+k_1,n_2,-n_3+k_3)$. We have $\omega(S_{n_2}(a_2)C_{n_1,n_3})=R_{13}(n_1,n_2,n_3)-R_{13}(-n_1+k_1,n_2,-n_3+k_3)$, which we denote by $R_{13}^{n_1,n_2,n_3}$. $\{R_{13}^{n_1,n_2,n_3}\}$ generate submodule $\mathcal{J}_{13}$ according to Theorem 2.3. Finally, by  Theorem 2.2, we have the desired result.
    \end{proof}

    \begin{figure}[H]
        \centering
        \includegraphics[width=1\linewidth]{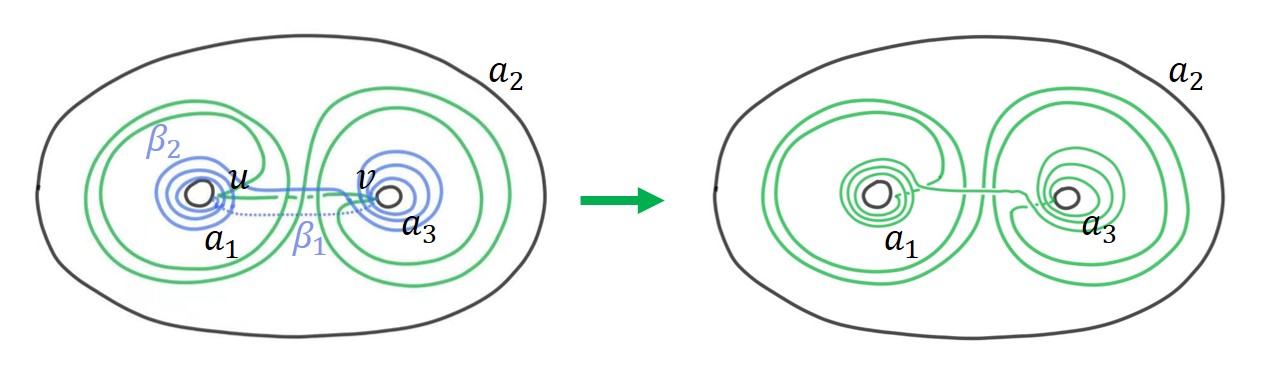}
        \caption{Change after handle sliding}
        \label{_4_4_Handle_Sliding}
    \end{figure}
    \label{Theorem 4.1}
\end{theorem}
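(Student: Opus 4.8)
The plan is to build $M=S^2(k_1,k_2,k_3)$ from the genus-two handlebody $H_2=\Sigma_{0,3}\times I$ by attaching two $2$-handles and one $3$-handle, and then to compute the skein module by pushing it through these attachments one at a time. Reading the vertical Heegaard splitting together with the surgery diagram (Figure \ref{_2_7_Heegaard_Surface_in_Kirby_Diagram}), the complementary handlebody of the splitting supplies a compressing system of two disjoint simple closed curves on $\partial H_2$, one band-summed across the first and third surgery tori and one across the second and third, and the splitting is completed by a single $3$-handle capping the leftover $S^2$ boundary. Since attaching a $3$-handle induces an isomorphism on Kauffman bracket skein modules (part (3) of the fundamental theorem of \cite{1998Fundamentals}), it plays no role, and the entire content sits in the two $2$-handles.

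First I would apply Corollary \ref{2.2} to these two curves to obtain
\[
S_{2,\infty}(M)\cong S_{2,\infty}(H_2)/(\mathcal{J}_{13}+\mathcal{J}_{23}),
\]
where $\mathcal{J}_{13}$ and $\mathcal{J}_{23}$ are the handle-sliding submodules of $S_{2,\infty}(H_2)$ coming from the two curves. The freeness of $S_{2,\infty}(H_2)$ on $\{s_1^{l_1}s_2^{l_2}s_3^{l_3}\}_{l_i\ge0}$ is already available from Theorem \ref{2006skein}, so what remains is to pin down generating sets for $\mathcal{J}_{13}$ and $\mathcal{J}_{23}$.

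For $\mathcal{J}_{13}$ I would pass to the relative skein module $S_{2,\infty}(H_2,\{u,v\})$, which is free on $\{S_l(a_2)C_{m,n}\}_{l\ge0}$, and use the homomorphism $\omega$ of Theorem \ref{2.3}, whose image is exactly the handle-sliding submodule. Applying $\omega$ to the basis element $S_{n_2}(a_2)C_{n_1,n_3}$ yields $S_{n_2}(a_2)C_{n_1,n_3}\cup\beta_2-S_{n_2}(a_2)C_{n_1,n_3}\cup\beta_1$. The crucial step is the geometric identification of these two cappings: closing the relative curve along the two arcs of the sliding region across the $(1,3)$-tori reproduces the model link $R$, giving $R_{13}(n_1,n_2,n_3)$ and $R_{13}(-n_1+k_1,n_2,-n_3+k_3)$, where the transition $n_i\mapsto -n_i+k_i$ on the relevant slopes is exactly the slide move recorded in the proposition $R(n_1,n_2)=R(-n_1+k_1,-n_2+k_2)$. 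This produces the generators $R_{13}^{n_1,n_2,n_3}$ of $\mathcal{J}_{13}$, and the same argument across the $(2,3)$-tori yields the generators $R_{23}^{n_1,n_2,n_3}$ of $\mathcal{J}_{23}$.

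The step I expect to be the main obstacle is precisely this last identification in the surgery diagram: one must track the relative curve $S_{n_2}(a_2)C_{n_1,n_3}$ through the band sum defining $R$, verify that the slide along $\beta$ implements exactly the slope reparametrization of the proposition above, and keep correct account of which boundary curve of $\Sigma_{0,3}$ plays the role of the connecting strand $b$ (here $a_2$, whose Chebyshev index appears shifted in the resulting expressions). Granting this dictionary between relative curves, their two cappings, and the $R_{13}$, $R_{23}$ families, the theorem follows by assembling the two handle-sliding submodules through Corollary \ref{2.2}.
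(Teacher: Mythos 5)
Your proposal is correct and follows essentially the same route as the paper: decompose $M$ into $H_2$ plus two $2$-handles (and a $3$-handle that acts trivially on skein modules), apply Corollary \ref{2.2} to get the quotient by $\mathcal{J}_{13}+\mathcal{J}_{23}$, and use the homomorphism $\omega$ of Theorem \ref{2.3} on the relative basis $S_{n_2}(a_2)C_{n_1,n_3}$, identifying the two cappings as $R_{13}(n_1,n_2,n_3)$ and $R_{13}(-n_1+k_1,n_2,-n_3+k_3)$ via the slide move of Proposition 3.1. The geometric dictionary you flag as the main obstacle is exactly what the paper settles by inspecting the handle slides in its Figures \ref{_2_7_Heegaard_Surface_in_Kirby_Diagram} and \ref{_4_4_Handle_Sliding}.
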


\begin{corollary}\label{D}
    $S_{2,\infty}(D^2(k_1,k_2))=S_{2,\infty}(H_2)/\mathcal{J}_{12}$, where $S_{2,\infty}(H_2)$ is a free module generated by $\left \{ s_1^{l_1}s_2^{l_2}s_3^{l_3}\right\}_{l_i\ge0 }$, $\mathcal{J}_{12}$ is submodule of $S_{2,\infty}(H_2)$ generated by $\left \{R_{12}^{n_1,n_2,n_3}\right\}_{n_i\in \mathbb{Z}}$. Where
    \begin{align}
        &R_{12}^{n_1,n_2,n_3}=R_{12}(n_1,n_2,n_3)-R_{12}(-n_1+k_1,-n_2+k_2,n_3).\nonumber
    \end{align}
\end{corollary}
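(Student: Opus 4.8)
The plan is to run the argument of Theorem \ref{Theorem 4.1} almost verbatim, with the single structural change that $D^2(k_1,k_2)=\Sigma_{0,1}((k_1,1),(k_2,1))$ carries two exceptional fibers and one torus boundary rather than three fibers and no boundary. I would take $a_1$ and $a_2$ to be the two boundary circles of $\Sigma_{0,3}$ carrying the solid tori with surgery coefficients $k_1,k_2$, while $a_3$ survives as $\partial D^2(k_1,k_2)$. Concretely, I would realize $D^2(k_1,k_2)$ as the genus-two handlebody $H_2$ with a single $2$-handle attached along the curve $\beta$ encircling the two exceptional-fiber tori (the curve visible in the surgery diagram of Figure \ref{_2_7_Heegaard_Surface_in_Kirby_Diagram}); since one $2$-handle drops the boundary genus from two to one, this is consistent with $D^2(k_1,k_2)$ having torus boundary. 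Because the base is a disk, no $3$-handle is attached, which is exactly why a single handle-sliding submodule appears in place of the two submodules $\mathcal{J}_{13}+\mathcal{J}_{23}$ of the closed case; the absence of the $3$-handle is harmless since $3$-handles induce isomorphisms and contribute no relations.

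I would first apply Corollary \ref{2.2} with $n=1$ to the attaching curve $\beta$, which gives at once
\[
S_{2,\infty}(D^2(k_1,k_2))\cong S_{2,\infty}(H_2)/\mathcal{J}_{12},
\]
where $\mathcal{J}_{12}$ is the submodule generated by the handle-slide expressions $L-sl_\beta(L)$ as $L$ ranges over the free basis $\{s_1^{l_1}s_2^{l_2}s_3^{l_3}\}_{l_i\ge 0}$. This already produces the quotient presentation asserted in the statement, so the only remaining task is to identify the generators of $\mathcal{J}_{12}$ explicitly.

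For this I would invoke Theorem \ref{2.3}, which presents $\mathcal{J}_{12}$ as the image of the homomorphism $\omega(\alpha)=\alpha\cup\beta_2-\alpha\cup\beta_1$ defined on the relative skein module. Placing the two framed points on the exceptional-fiber tori, that is $u\in a_1\times I$ and $v\in a_2\times I$, the relative module $S_{2,\infty}(H_2,\{u,v\})$ is free with basis $\{S_l(a_3)C_{m,n}\}$ by the same reasoning as in the proposition computing the $u\in a_1$, $v\in a_3$ case, so it suffices to evaluate $\omega$ on $\alpha=S_{n_3}(a_3)C_{n_1,n_2}$. Reading the positions off the surgery diagram as in Figure \ref{_4_4_Handle_Sliding}, closing $\alpha$ by $\beta_1$ yields $R_{12}(n_1,n_2,n_3)$, while closing by $\beta_2$ applies the model symmetry $R(n_1,n_2)=R(-n_1+k_1,-n_2+k_2)$ to the two coupled slots and leaves the boundary slot fixed, yielding $R_{12}(-n_1+k_1,-n_2+k_2,n_3)$. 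Hence $\omega(S_{n_3}(a_3)C_{n_1,n_2})=R_{12}^{n_1,n_2,n_3}$, and these elements generate $\mathcal{J}_{12}$, which is the claim.

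The step I expect to be the main obstacle is the diagrammatic verification that the single slide along $\beta$ realizes the substitution $(n_1,n_2,n_3)\mapsto(-n_1+k_1,-n_2+k_2,n_3)$: one must confirm that attaching exactly one $2$-handle on the bounded handlebody couples precisely the two exceptional-fiber tori through the model relation $R$ while fixing the surviving boundary slot $a_3$. This is the direct analogue of the slot bookkeeping in Theorem \ref{Theorem 4.1}, but because the third torus is never filled, I would want to check from the surgery picture that the connecting band is identified with $b=a_3$ (as used in the computation of $R_{12}$) and that the handle slide genuinely leaves the third coordinate untouched. Once this correspondence is secured, the rest of the proof is a transcription of the closed-case computation.
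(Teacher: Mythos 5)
Your proposal is correct and matches the argument the paper intends for Corollary \ref{D}, which it leaves implicit as a one-handle analogue of Theorem \ref{Theorem 4.1}: apply Corollary \ref{2.2} to the single $2$-handle, use Theorem \ref{2.3} with the relative basis $S_{n_3}(a_3)C_{n_1,n_2}$ (endpoints on the $a_1$ and $a_2$ slots), and identify $\omega(S_{n_3}(a_3)C_{n_1,n_2})=R_{12}(n_1,n_2,n_3)-R_{12}(-n_1+k_1,-n_2+k_2,n_3)$ via the symmetry $R(n_1,n_2)=R(-n_1+k_1,-n_2+k_2)$. Your flagged verification that the slide fixes the $a_3$ slot and that the connecting band is $b=a_3$ is exactly the bookkeeping the paper relies on (Proposition \ref{Proposition 3.2} and the lemma computing $R_{12}$), so no gap remains.
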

\begin{remark}
    $\mathcal{J}_{13}+\mathcal{J}_{23}=\mathcal{J}_{12}+\mathcal{J}_{23}=\mathcal{J}_{12}+\mathcal{J}_{13}=\mathcal{J}_{12}+\mathcal{J}_{13}+\mathcal{J}_{23}$.
\end{remark}
\subsection{Relations among relators}
In this subsection, we list some equations for later reducing the superfluous relators. In the following lemmas, all relators are in $S_{2,\infty}(S^2(k_1,k_2,k_3))$.

\begin{lemma}
\label{Lemma formula 0}
    $$R_{12}^{-1,k_2+1,n_3}+A^{2}R_{12}^{0,k_2,n_3-1}+A^2R_{12}^{0,k_2,n_3+1}+A^{4}R_{12}^{1,k_2-1,n_3}=0.$$
    \begin{proof}
        Appendix.
    \end{proof}
\end{lemma}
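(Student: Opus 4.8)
The plan is to reduce the identity to a routine computation in the free module $S_{2,\infty}(H_2)$ with basis $\{s_1^{l_1}s_2^{l_2}s_3^{l_3}\}_{l_i\ge 0}$, using the closed-form expansion of $R_{12}(n_1,n_2,n_3)$ from the Lemma immediately above. First I would unfold each relator by its definition $R_{12}^{n_1,n_2,n_3}=R_{12}(n_1,n_2,n_3)-R_{12}(-n_1+k_1,-n_2+k_2,n_3)$ from Corollary \ref{D}, so that the left-hand side splits as $X-Y$, where $X$ collects the four untilded symbols $R_{12}(-1,k_2+1,n_3)$, $R_{12}(0,k_2,n_3\pm 1)$, $R_{12}(1,k_2-1,n_3)$, and $Y$ collects the four tilded symbols $R_{12}(k_1+1,-1,n_3)$, $R_{12}(k_1,0,n_3\pm 1)$, $R_{12}(k_1-1,1,n_3)$. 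I would then show $X=0$ and $Y=0$ independently.

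For $X$, observe that in all four symbols the first two indices sum to $k_2$, so the outer prefactor $A^{-n_1-n_2}$ is the common value $A^{-k_2}$ and only the internal monomials differ. Substituting the four-term formula for each $R_{12}$ and invoking the Chebyshev sign convention $S_{-1}(x)=0$, $S_{-2}(x)=-1$, $S_{-3}(x)=-S_1(x)$ to rewrite the entries with $n_1=-1$, the vanishing of $S_{-1}(a_1)$ annihilates precisely the monomials that would otherwise have no partner, while the sign flips coming from $S_{-2}$ and $S_{-3}$ render the survivors positive. Grouping by basis monomial, one checks that each surviving monomial — namely $s_1^{1}s_2^{k_2-1}s_3^{n_3}$, $s_2^{k_2}s_3^{n_3\pm 1}$, and $s_2^{k_2-2}s_3^{n_3\pm 1}$ — occurs exactly twice with opposite signs and with matching powers of $A$, so $X=0$. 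The coefficients $1,A^{2},A^{2},A^{4}$ in the statement are exactly what is needed to align these $A$-powers.

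The computation for $Y$ is identical after interchanging the two boundary curves: the explicit formula for $R_{12}$ is symmetric under $(n_1,s_1)\leftrightarrow(n_2,s_2)$, and in $Y$ it is now the \emph{second} index that runs through $-1,0,0,1$, so $Y=0$ follows from the mirror of the $X$-argument. Hence the left-hand side equals $X-Y=0$. The only genuine difficulty is bookkeeping — tracking the negative-index Chebyshev reductions together with the powers of $A$ — and it is worth emphasizing that the identity is special to these boundary values of the indices: for generic nonnegative $n_1,n_2$ none of the sixteen terms vanish and no cancellation occurs, so the argument truly relies on the relation $S_{-1}=0$ rather than on any universal three-term recurrence for $R_{12}$.
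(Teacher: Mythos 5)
Your proposal is correct and takes essentially the same route as the paper's appendix proof: there, too, each relator is unfolded via $R_{12}^{n_1,n_2,n_3}=R_{12}(n_1,n_2,n_3)-R_{12}(-n_1+k_1,-n_2+k_2,n_3)$, the untilded quadruple $R_{12}(-1,k_2+1,n_3)+A^2R_{12}(0,k_2,n_3-1)+A^2R_{12}(0,k_2,n_3+1)+A^4R_{12}(1,k_2-1,n_3)$ is expanded termwise with the closed formula and the conventions $s_1^{-1}=0$, $s_1^{-2}=-s_1^{0}$, $s_1^{-3}=-s_1^{1}$ to give $0$, and the reflected quadruple is handled ``similarly,'' exactly the $(n_1,s_1)\leftrightarrow(n_2,s_2)$ symmetry you invoke. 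Your bookkeeping of the cancellation (the surviving monomials $s_1^{1}s_2^{k_2-1}s_3^{n_3}$, $s_2^{k_2}s_3^{n_3\pm1}$, $s_2^{k_2-2}s_3^{n_3\pm1}$ each occurring twice with opposite signs) agrees with the paper's displayed sixteen-term expansion, and your closing observation that the identity fails for generic indices is accurate.
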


\begin{lemma}
\label{Lemma formula 01}
    \begin{align}
        &R_{12}^{-n_1,k_2+n_2,n_3}-A^{2n_1+2n_2}R_{12}^{-n_1+k_1,n_2,n_3}\nonumber\\&+A^{2n_1-2}\sum_{i=0}^{n_1-2}\sum_{j=0}^{i}(-1)^iR_{12}^{n_1-2-i,k_2+n_2-i,n_3-i+2j}+A^{2n_1}\sum_{i=0}^{n_1-1}\sum_{j=0}^{i+1}(-1)^iR_{12}^{n_1-1-i,k_2+n_2-1-i,n_3-1-i+2j}\nonumber\\&+A^{2n_1}\sum_{i=1}^{n_1-2}\sum_{j=0}^{i-1}(-1)^iR_{12}^{n_1-1-i,k_2+n_2-1-i,n_3+1-i+2j}+A^{2n_1+2}\sum_{i=0}^{n_1-1}\sum_{j=0}^{i}(-1)^iR_{12}^{n_1-i,k_2+n_2-2-i,n_3-i+2j}\nonumber\\&-A^{2n_2+2}\sum_{i=0}^{n_1-2}\sum_{j=0}^{i}(-1)^iR_{12}^{n_1+k_1-2-i,n_2-i,n_3-i+2j}-A^{2n_2}\sum_{i=0}^{n_1-1}\sum_{j=0}^{i+1}(-1)^iR_{12}^{n_1+k_1-1-i,n_2-1-i,n_3-1-i+2j}\nonumber \\ &-A^{2n_2}\sum_{i=1}^{n_1-2}\sum_{j=0}^{i-1}(-1)^iR_{12}^{n_1+k_1-1-i,n_2-1-i,n_3+1-i+2j}-A^{2n_2-2}\sum_{i=0}^{n_1-1}\sum_{j=0}^{i}(-1)^iR_{12}^{n_1+k_1-i,n_2-2-i,n_3-i+2j}=0,\nonumber
    \end{align}
    where $n_1\ge 1$.
    \begin{proof}
        See Appendix.
    \end{proof}
\end{lemma}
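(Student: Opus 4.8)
The plan is to peel the three-index identity into a manageable two-index one and then induct on $n_1$. The first move is to eliminate $n_3$. From the explicit formula for $R_{12}(n_1,n_2,n_3)$ together with $s_3^{n_3+1}+s_3^{n_3-1}=a_3\,s_3^{n_3}$, one reads off $R_{12}(n_1,n_2,n_3)=S_{n_3}(a_3)\,R_{12}(n_1,n_2,0)$, so all of the $n_3$-dependence sits in a single factor $S_{n_3}(a_3)$ and hence $R_{12}^{m_1,m_2,n_3}=S_{n_3}(a_3)\,G(m_1,m_2)$, where $G(m_1,m_2):=R_{12}(m_1,m_2,0)-R_{12}(-m_1+k_1,-m_2+k_2,0)$. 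Each inner sum over $j$ in the statement is then resolved by the product-to-sum rule $\sum_{j=0}^{i}S_{n_3-i+2j}(a_3)=S_i(a_3)S_{n_3}(a_3)$ (and its $i\pm1$ variants for the $\sum_{j=0}^{i\pm1}$ sums). After this collapse the whole left-hand side factors as $S_{n_3}(a_3)\,\Xi(n_1,n_2)$ for a single element $\Xi(n_1,n_2)$ carrying only the weights $(-1)^iS_i(a_3)$ and the two-index relators $G$; it therefore suffices to prove $\Xi(n_1,n_2)=0$.

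Two structural facts feed the induction. First, directly from the definition, $G(-m_1+k_1,-m_2+k_2)=-G(m_1,m_2)$, which I use to fold every $k_1$-shifted relator appearing in $\Xi$ onto an unshifted one. Second, the only source of cancellation is the negative-index conventions $S_{-1}=0$, $S_{-2}=-S_0$, $S_{-3}=-S_1$, which make the relevant stencils of $R_{12}(\cdot,\cdot,0)$ collapse whenever a first or second index reaches $-1$; Lemma~\ref{Lemma formula 0} is exactly the resulting ``open-end'' relation $G(-1,k_2+1)+A^2a_3\,G(0,k_2)+A^4G(1,k_2-1)=0$. These relations are what let me trade a relator at first index $-1$ for relators at first indices $0$ and $1$, at the cost of a $\pm1$ spread in the $S(a_3)$-weight.

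I would then prove $\Xi(n_1,n_2)=0$ by induction on $n_1\ge1$. For the base case $n_1=1$ the four double sums collapse to their $i=0$ terms and, after applying the antisymmetry of $G$, the expression splits along the two antidiagonals $m_1+m_2=k_2\pm(n_2-1)$ into two pieces; using Lemma~\ref{Lemma formula 0} and the reflection identity $S_{-m-2}=-S_m$ that relates these two antidiagonals, I expect the two pieces to cancel. In the inductive step I apply the trade-off relation to the leading term $R_{12}^{-n_1,k_2+n_2,n_3}$, lowering its first index and spreading $n_3$ by $\pm1$; matching the remainder against the $n_1-1$ and $n_1-2$ instances of the claim produces precisely the nested sums, the index $j$ recording the $n_3$-spread accumulated over the $i$ iterations and the sign $(-1)^i$ recording the parity.

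The main obstacle is the combinatorial bookkeeping of this matching: verifying that every relator generated by the substitution lands in exactly one of the eight sums with the prescribed weight $A^{2n_1\pm\cdots}$ or $A^{2n_2\pm\cdots}$ and sign $(-1)^i$, that the boundary terms ($i=0$ and $i=n_1-1,n_1-2$) reproduce the two explicit leading terms, and that the two weighted groups correctly interchange under the antisymmetry of $G$; the base-case cancellation between the two antidiagonals is the delicate end of the same phenomenon. If organizing this telescoping proves too fragile, the safe fallback is to expand $\Xi(n_1,n_2)$ fully in the basis $\{s_1^{a}s_2^{b}s_3^{c}\}$ (the factorization and product-to-sum rule having already reduced the $n_3$ and $j$ sums to Chebyshev products) and to verify directly that the coefficient of each basis element vanishes.
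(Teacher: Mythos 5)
Your two reduction steps are correct, and they organize the computation in a genuinely different way from the paper. The factorization $R_{12}^{m_1,m_2,n_3}=S_{n_3}(a_3)\,G(m_1,m_2)$ (which holds because the relator's $n_3$-dependence enters only through $s_3^{n_3},s_3^{n_3\pm1}$ and the sliding does not move the third index) together with the product-to-sum collapse $\sum_{j=0}^{i}S_{n_3-i+2j}(a_3)=S_i(a_3)S_{n_3}(a_3)$, valid for all $n_3\in\mathbb{Z}$ under the convention $S_{-m-2}=-S_m$, cleanly removes the variable $n_3$ and the index $j$. The paper never factors out $a_3$; instead it introduces the partial sums $F_u^{n_1,n_2,n_3}$, $\tilde F_u^{n_1,n_2,n_3}$ and proves the telescoping identities of Lemmas \ref{Lemma R=F+F=F+F} and \ref{Lemma R12}, which are exactly the unfactored avatars of your product formula, while Lemmas \ref{Lemma F+F} and \ref{Lemma 4+F} play the role of your boundary cancellations. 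Your base case plan is also sound: for $n_1=1$, after folding with $G(k_1-m_1,k_2-m_2)=-G(m_1,m_2)$ the expression splits into two three-term groups, the unreflected stencil of each vanishes identically (this uses only $s_1^{-1}=0$, $s_1^{-2}=-s_1^{0}$, $s_1^{-3}=-s_1^{1}$, for \emph{any} second index), and the two surviving reflected residues cancel each other termwise; this does check out.

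The gap is in the inductive step. Lemma \ref{Lemma formula 0} is not a movable trade. Writing $\hat P(m_1,m_2)$ for the $a_3$-stripped relator with $R_{12}(m_1,m_2,n_3)=S_{n_3}(a_3)\hat P(m_1,m_2)$, the combination $\hat P(-1,m+1)+A^2a_3\hat P(0,m)+A^4\hat P(1,m-1)$ vanishes for every $m$, but for the folded relators one has $G(-1,m+1)+A^2a_3\,G(0,m)+A^4G(1,m-1)=-\bigl(\hat P(k_1+1,k_2-m-1)+A^2a_3\hat P(k_1,k_2-m)+A^4\hat P(k_1-1,k_2-m+1)\bigr)$, and this residue vanishes only when $m=k_2$, i.e.\ when the reflected stencil sits at the $s_2$-boundary; that is why the second index in Lemma \ref{Lemma formula 0} is pinned at $k_2+1$. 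Consequently there is no three-term relation lowering the first index of the leading term $R_{12}^{-n_1,k_2+n_2,n_3}$ when $n_1\ge2$: ``apply the trade-off relation to the leading term'' does not parse there, and you would have to rewrite the negative-index relator via $S_{-m-2}=-S_m$ and track the surviving reflected stencils against the $A^{2n_2}$-weighted sums --- which is precisely the bookkeeping the paper's $F,\tilde F$ machinery is built to absorb, and which your sketch defers rather than resolves. Your stated fallback --- after the two collapses, verify $\Xi(n_1,n_2)=0$ coefficientwise in the monomial basis --- is legitimate and finite (the $i$-sums telescope once $S_i(a_3)a_3=S_{i+1}(a_3)+S_{i-1}(a_3)$ is expanded), and in substance it reproduces the paper's own proof; with it the argument closes, but the induction as written is not yet a proof.
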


\begin{lemma}
\label{Lemma formula 02}
     \begin{align}
        &R_{12}^{-n_1,k_2+n_2,n_3}-A^{2n_1+2n_2}R_{12}^{-n_1+k_1,n_2,n_3}\nonumber\\&+A^{2n_1+2}\sum_{i=0}^{n_2-2}\sum_{j=0}^{i}(-1)^iR_{12}^{n_1-i,k_2+n_2-2-i,n_3-i+2j}+A^{2n_1}\sum_{i=0}^{n_2-1}\sum_{j=0}^{i+1}(-1)^iR_{12}^{n_1-1-i,k_2+n_2-1-i,n_3-1-i+2j}\nonumber\\&+A^{2n_1}\sum_{i=1}^{n_2-2}\sum_{j=0}^{i-1}(-1)^iR_{12}^{n_1-1-i,k_2+n_2-1-i,n_3+1-i+2j}+A^{2n_1-2}\sum_{i=0}^{n_2-1}\sum_{j=0}^{i}(-1)^iR_{12}^{n_1-2-i,k_2+n_2-i,n_3-i+2j}\nonumber\\&-A^{2n_2-2}\sum_{i=0}^{n_2-2}\sum_{j=0}^{i}(-1)^iR_{12}^{n_1+k_1-i,n_2-2-i,n_3-i+2j}-A^{2n_2}\sum_{i=0}^{n_2-1}\sum_{j=0}^{i+1}(-1)^iR_{12}^{n_1+k_1-1-i,n_2-1-i,n_3-1-i+2j}\nonumber \\ &-A^{2n_2}\sum_{i=1}^{n_2-2}\sum_{j=0}^{i-1}(-1)^iR_{12}^{n_1+k_1-1-i,n_2-1-i,n_3+1-i+2j}-A^{2n_2+2}\sum_{i=0}^{n_2-1}\sum_{j=0}^{i}(-1)^iR_{12}^{n_1+k_1-2-i,n_2-i,n_3-i+2j}=0.\nonumber
    \end{align}
    where $n_2\ge 1$.
    \begin{proof}
        The proof is similar to that of Lemma \ref{Lemma formula 01}.
    \end{proof}
\end{lemma}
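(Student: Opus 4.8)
The plan is to deduce Lemma \ref{Lemma formula 02} from Lemma \ref{Lemma formula 01} by exploiting the symmetry that interchanges the first two exceptional fibers, rather than repeating the long inductive computation of the appendix. The essential observation is that $R_{12}(n_1,n_2,n_3)$ transforms predictably under swapping the two curves $a_1\leftrightarrow a_2$: from the explicit expansion of $R_{12}(n_1,n_2,n_3)$ into the basis $\{s_1^{l_1}s_2^{l_2}s_3^{l_3}\}$ (Proposition \ref{Proposition 3.2} together with $b=s_3$), every monomial $s_1^{p}s_2^{q}s_3^{r}$ is sent to $s_1^{q}s_2^{p}s_3^{r}$ while the $A$-powers depend only on $n_1+n_2$ and $n_3$. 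Hence, letting $\iota$ denote the $\mathbb{Z}[A^{\pm}]$-algebra involution of $S_{2,\infty}(H_2)$ induced by the mapping class of $\Sigma_{0,3}$ that swaps the two boundary circles carrying $a_1$ and $a_2$ (so $\iota(a_1)=a_2$, $\iota(a_2)=a_1$, $\iota(a_3)=a_3$), one has $\iota(R_{12}(n_1,n_2,n_3))=R_{12}(n_2,n_1,n_3)$.

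Writing $R_{12}^{a,b,c}[k_1,k_2]=R_{12}(a,b,c)-R_{12}(-a+k_1,-b+k_2,c)$ to record the dependence on the Seifert data, the above symmetry gives the transport identity
$$\iota\bigl(R_{12}^{a,b,c}[k_2,k_1]\bigr)=R_{12}^{b,a,c}[k_1,k_2].$$
I will also use the antisymmetry $R_{12}^{a,b,c}[k_1,k_2]=-R_{12}^{-a+k_1,\,-b+k_2,\,c}[k_1,k_2]$, which is immediate from the definition. The steps are then: (1) invoke Lemma \ref{Lemma formula 01} for the Seifert data $(k_2,k_1,k_3)$, i.e. with $k_1$ and $k_2$ interchanged throughout its statement, obtaining a vanishing combination of the relators $R_{12}^{\cdot}[k_2,k_1]$ valid for $n_1\ge 1$; (2) apply $\iota$ to this identity and replace each $\iota(R_{12}^{p,q,r}[k_2,k_1])$ by $R_{12}^{q,p,r}[k_1,k_2]$ via the transport identity, which swaps the first two slots of every relator while leaving the $A$-coefficients untouched; (3) rewrite each resulting relator using the antisymmetry so that its leading argument again has the form $-n_1+(\text{const})$; and (4) rename the free parameters $n_1\leftrightarrow n_2$.

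On the two leading terms one checks directly that $R_{12}^{-n_1,k_2+n_2,n_3}-A^{2n_1+2n_2}R_{12}^{-n_1+k_1,n_2,n_3}$ reappears up to an overall sign, and that the hypothesis $n_1\ge 1$ of Lemma \ref{Lemma formula 01} becomes exactly the hypothesis $n_2\ge 1$ of Lemma \ref{Lemma formula 02}; the same chain of substitutions carries the eight nested sums of Lemma \ref{Lemma formula 01} onto those of Lemma \ref{Lemma formula 02}. I expect the main obstacle to be purely organizational: verifying term by term that each double sum transforms correctly under the composite of $\iota$, the slot swap, the antisymmetry flip, and the relabeling, so that the alternating signs $(-1)^i$, the ranges $\sum_{j=0}^{i}$, $\sum_{j=0}^{i+1}$, $\sum_{j=0}^{i-1}$, and the third arguments of the form $n_3\pm\cdots+2j$ all land on their intended counterparts.

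If one prefers to avoid this matching against Lemma \ref{Lemma formula 01}, the alternative is to mirror its appendix proof verbatim, now performing the induction on $n_2$: the engine is the three-term recurrence of Lemma \ref{Lemma 3.1} applied in the second Chebyshev index (together with $s_2^{m+1}=a_2s_2^{m}-s_2^{m-1}$), the base case $n_2=1$ reduces after the antisymmetry to Lemma \ref{Lemma formula 0}, and the inductive step peels one layer off the leading relators $R_{12}^{-n_1,k_2+n_2,n_3}$ and $R_{12}^{-n_1+k_1,n_2,n_3}$. In this route the obstacle is the same bookkeeping, reappearing as the exactness of the telescoping that fills the boundary slices of the nested sums. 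Either way, no genuinely new phenomenon arises beyond what is already present in Lemma \ref{Lemma formula 01}, which is why the two statements share the same two leading terms.
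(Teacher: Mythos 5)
Your proposal is correct, but it proves the lemma by a different mechanism than the paper. The paper's proof is literally ``repeat the appendix computation of Lemma \ref{Lemma formula 01} with the induction running in the second Chebyshev index'' --- i.e.\ your fallback route --- whereas your primary route derives Lemma \ref{Lemma formula 02} \emph{from} Lemma \ref{Lemma formula 01} by the swap symmetry, with no new telescoping computation. The key points all check out: since $S_{2,\infty}(H_2)$ is free on $\{s_1^{l_1}s_2^{l_2}s_3^{l_3}\}$, the involution $\iota$ exchanging the exponents of $s_1$ and $s_2$ is well defined (one may cite the mapping class of $\Sigma_{0,3}$ swapping the two boundary circles, or just define $\iota$ linearly on the basis), and the expansion of Proposition \ref{Proposition 3.2} together with $b=s_3$,
\begin{equation*}
R_{12}(n_1,n_2,n_3)=-A^{-n_1-n_2-2}s_1^{n_1}s_2^{n_2}s_3^{n_3}-A^{-n_1-n_2+2}s_1^{n_1-2}s_2^{n_2-2}s_3^{n_3}-A^{-n_1-n_2}s_1^{n_1-1}s_2^{n_2-1}\left(s_3^{n_3+1}+s_3^{n_3-1}\right),
\end{equation*}
is manifestly symmetric under swapping the first two slots (the $A$-powers depend only on $n_1+n_2$ and the convention $S_{-n}=-S_{n-2}$ is slot-independent), so $\iota\bigl(R_{12}(n_1,n_2,n_3)\bigr)=R_{12}(n_2,n_1,n_3)$ for all integer indices, and your transport identity $\iota\bigl(R_{12}^{a,b,c}[k_2,k_1]\bigr)=R_{12}^{b,a,c}[k_1,k_2]$ follows. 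The one hypothesis you use tacitly and should state is that Lemma \ref{Lemma formula 01} holds for \emph{arbitrary} integers $k_1,k_2$ (in particular with the data swapped to $(k_2,k_1,k_3)$); this is true because the appendix proof is a formal identity in the free module making no use of any ordering or positivity of the $k_i$, but it is worth saying, since Section 4 of the paper imposes $k_2\ge k_1\ge 1$. I verified the bookkeeping you flagged as the main obstacle: after $\iota$, the antisymmetry $R_{12}^{a,b,c}=-R_{12}^{-a+k_1,-b+k_2,c}$, and the relabeling $n_1\leftrightarrow n_2$, the two leading terms and all eight double sums of the transported Lemma \ref{Lemma formula 01} land exactly on the negatives of the corresponding terms of Lemma \ref{Lemma formula 02} (for instance, the $A^{2n_1-2}$ sum of Lemma \ref{Lemma formula 01} goes to the $-A^{2n_2-2}$ sum, and the $-A^{2n_2+2}$ sum goes to the $+A^{2n_1+2}$ sum), so the transported identity is exactly $(-1)$ times the claimed one, and the hypothesis $n_1\ge 1$ becomes $n_2\ge 1$. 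What your approach buys is a genuine proof-length reduction and an explanation of \emph{why} the two lemmas are twins --- they are exchanged by a symmetry of the presentation --- at the modest cost of the slot-by-slot matching; the paper's approach buys nothing extra here beyond self-containedness, since it simply re-runs the same induction.
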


\begin{lemma}
\label{Lemma formula 1}
    When $n_1\ge 0$,
    \begin{align}
    &R_{23}^{n_1,n_2,n_3}-A^{n_1-n_3}\sum_{i=0}^{n_1}(-1)^i\left(R_{12}^{n_1-i,n_2-i,n_3+i}+A^2R_{12}^{n_1+1-i,n_2-1-i,n_3-1+i}\right)\nonumber\\&-A^{n_1+n_2-k_2}\sum_{i=0}^{n_1}(-1)^i\left(R_{13}^{n_1-i,n_2-k_2-2-i,-n_3+k_3-i}+A^2R_{13}^{n_1+1-i,n_2-k_2-1-i,-n_3+k_3-1-i}\right)=0, \nonumber\\
    &R_{23}^{n_1,n_2,n_3}-A^{n_1-n_2}\sum_{i=0}^{n_1}(-1)^i\left(R_{13}^{n_1-i,n_2+i,n_3-i}+A^2R_{13}^{n_1+1-i,n_2-1+i,n_3-1-i}\right)\nonumber\\&-A^{n_1+n_3-k_3}\sum_{i=0}^{n_1}(-1)^i\left(R_{12}^{n_1-i,-n_2+k_2-i,n_3-k_3-2-i}+A^2R_{12}^{n_1+1-i,-n_2+k_2-1-i,n_3-k_3-1-i}\right)=0. \nonumber
    \end{align}
    \begin{proof}
    See    Appendix.
    \end{proof}
\end{lemma}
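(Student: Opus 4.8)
The plan is to verify both identities directly in the free module $S_{2,\infty}(H_2)$, whose basis is $\{s_1^{l_1}s_2^{l_2}s_3^{l_3}\}_{l_i\ge 0}$. Each primed relator $R_{ij}^{n_1,n_2,n_3}$ is by definition a difference of two $R_{ij}(\cdot,\cdot,\cdot)$'s, and the explicit expansions of $R_{12}$, $R_{13}$, $R_{23}$ in this basis (with $b=s_3$) are already recorded, so every term of each stated combination can be written as a $\mathbb{Z}[A^{\pm}]$-combination of basis monomials. Since the module is free it then suffices to check that the coefficient of each $s_1^{a}s_2^{b}s_3^{c}$ vanishes, using the conventions $S_{-1}=0$ and $S_{-n-2}=-S_n$ to interpret any term whose index drops below zero.

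I would prove the first identity by induction on $n_1\ge 0$, following the pattern of the proof of Proposition \ref{Proposition 3.2}. The key structural fact is that the first index of $R_{23}$ is its non-surgery (Chebyshev) slot: the coefficient $A^{-n_2-n_3}$ appearing in the explicit formula does not involve $n_1$, so that
\[
R_{23}^{n_1,n_2,n_3}=a_1\,R_{23}^{n_1-1,n_2,n_3}-R_{23}^{n_1-2,n_2,n_3}.
\]
The base cases $n_1=0$ and $n_1=1$ are finite identities: expanding the few primed relators that occur and invoking Lemma \ref{Lemma 3.2} reduces each to a coefficient check among basis monomials. For the inductive step I would apply the displayed recurrence to the left-hand side and the induction hypothesis to $R_{23}^{n_1-1,n_2,n_3}$ and $R_{23}^{n_1-2,n_2,n_3}$, so that the claim becomes the purely algebraic assertion that the right-hand double sum satisfies the same second-order recurrence in $n_1$. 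Multiplication by $a_1$ acts on basis monomials by $a_1 s_1^{m}=s_1^{m+1}+s_1^{m-1}$ (equivalently, via the first-index skein recurrences of Lemma \ref{Lemma 3.1} for $R_{12}$ and $R_{13}$), which shifts the first index of each summand; after reindexing, the alternating diagonal sums $\sum_{i}(-1)^i(\cdots)$ telescope, leaving only boundary contributions at $i=0$ and $i=n_1$ that, together with the prefactors $A^{n_1-n_3}$ and $A^{n_1+n_2-k_2}$, reproduce the eight monomials of $R_{23}^{n_1,n_2,n_3}$.

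For the second identity I would not repeat the computation, but instead invoke the symmetry of $S^2(k_1,k_2,k_3)$ that interchanges the two exceptional fibers labelled $2$ and $3$. On the skein side this involution swaps $s_2\leftrightarrow s_3$ and $k_2\leftrightarrow k_3$, and a direct check on the explicit formulas shows that it sends $R_{12}(n_1,n_2,n_3)\mapsto R_{13}(n_1,n_3,n_2)$, sends $R_{13}$ to $R_{12}$ in the symmetric way, and fixes $R_{23}(n_1,n_2,n_3)\mapsto R_{23}(n_1,n_3,n_2)$; at the level of primed relators it gives $R_{12}^{n_1,n_2,n_3}\mapsto R_{13}^{n_1,n_3,n_2}$ and the analogous statements. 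Applying this involution to the first identity and relabelling $n_2\leftrightarrow n_3$ produces exactly the second identity, so the two are equivalent.

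The step I expect to be the main obstacle is the bookkeeping in the inductive step above. Unlike $R_{23}^{n_1,n_2,n_3}$, the primed relators $R_{12}^{n_1,n_2,n_3}$ and $R_{13}^{n_1,n_2,n_3}$ do not individually obey a clean first-index recurrence, because the two $R_{ij}(\cdot)$ pieces defining them have opposite first-index behaviour and carry mismatched powers of $A$. Consequently the telescoping has to be carried out only after expanding fully into the free basis, where all three indices shift simultaneously along the summation diagonal and each summand contributes an $i$-dependent power of $A$. Checking that these shifted, $A$-weighted alternating sums collapse to precisely the monomials of $R_{23}^{n_1,n_2,n_3}$, while correctly handling the boundary terms where a shifted index becomes negative, is the delicate part, and is what relegates the full argument to the appendix.
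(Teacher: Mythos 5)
Your proposal is correct, and its computational core coincides with the paper's, but the organization differs in two genuine ways. The paper proves the first identity with no induction at all: its Lemma \ref{Lemma 1_1} telescopes the alternating diagonal sums $\sum_{i=u_0}^{u_1}(-1)^iR_{12}(n_1\mp i,n_2\mp i,i)$ in closed form down to four boundary monomials, and then Lemmas \ref{Lemma 1_2} and \ref{Lemma 1_3} assemble these into two \emph{unprimed} identities --- one expressing $R_{23}(n_1,n_2,n_3)$ as an alternating sum of $R_{12}$'s (or $R_{13}$'s), one matching the slid $R_{12}$-sums against slid $R_{13}$-sums --- from which the primed statement follows by subtraction. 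Your induction on $n_1$ via the recurrence $R_{23}^{n_1,n_2,n_3}=a_1R_{23}^{n_1-1,n_2,n_3}-R_{23}^{n_1-2,n_2,n_3}$ is legitimate (the first slot of $R_{23}$ is indeed the Chebyshev slot, and the handle slide fixes $n_1$, so the primed relator inherits the recurrence), but it is superfluous as you describe it: the telescoping-plus-boundary-terms computation you invoke for the inductive step already establishes the identity directly for arbitrary $n_1$, which is exactly what Lemma \ref{Lemma 1_1} does, so the induction wrapper buys nothing over the paper's closed-form route. Where you improve on the paper is the second identity: the paper merely asserts ``the method for the second equation is the same,'' whereas your involution $s_2\leftrightarrow s_3$, $k_2\leftrightarrow k_3$ is a clean derivation --- one checks on the explicit expansions that it sends $R_{12}(n_1,n_2,n_3)\mapsto R_{13}(n_1,n_3,n_2)$, $R_{23}(n_1,n_2,n_3)\mapsto R_{23}(n_1,n_3,n_2)$, and correspondingly on primed relators, and applying it to the first identity and relabelling reproduces the second exactly. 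The only point worth making explicit is that the first identity holds for all integer parameters $k_2,k_3$ (no positivity is used in Section 3), so swapping them is harmless; with that noted, your symmetry argument is tighter than the paper's.
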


\section{The modules $S_{2,\infty}(D^2(k_1,k_2))$, $k_i\ge1$}
In this section, we will show that $S_{2,\infty}(D^2(k_1,k_2))$ is free when $k_i\ge1$, and provide an explicit generating set. Since the roles of $k_1$ and $k_2$ are totally symmetric, without loss of generality, we prove the case when $k_2\geq k_1\geq 1$.
\subsection{Reduction of relators}
According to Corollary \ref{D}, the relation submodule of $S_{2,\infty}(D^2(k_1,k_2))$ is generated by $\mathcal{J}^{12}=\left \{R_{12}^{n_1,n_2,n_3}\right\}_{n_i\in \mathbb{Z}}$. By the properties developed in Section 3.3, we are able to reduce the generating set of relators, which is helpful for later discussion.
\begin{definition}

Let $J_{I_1,I_2}^{12}=\left\{R_{12}^{n_1,n_2,n_3}|n_1\in I_1,n_2\in I_2,n_3\ge 0\right\}$, $\mathcal{J}_{I_1,I_2}^{12}$ is the submodule of $S_{2,\infty}(H_2)$ generated by $J_{I_1,I_2}^{12}$.
\end{definition}

\begin{proposition}\label{Jmin}
    When $k_2\ge k_1\ge 1$, $\mathcal{J}^{12}$ is generated by $J^{12}_{min}$, where 
    \begin{align}
        J^{12}_{min}=
        J^{12}_{(k_1,),[0,)}\cup J^{12}_{(\frac{k_1}{2},k_1],(\frac{k_2}{2},)}\cup J^{12}_{[0,\frac{k_1}{2}],[\frac{k_2}{2},)}.\nonumber
    \end{align}
    \end{proposition}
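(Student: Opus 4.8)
The plan is to prove both inclusions. Since $J^{12}_{min}\subseteq\{R_{12}^{n_1,n_2,n_3}\}_{n_i\in\mathbb{Z}}$, the inclusion $\mathcal{J}^{12}_{min}\subseteq\mathcal{J}^{12}$ is immediate, so the content is to show that every generator $R_{12}^{n_1,n_2,n_3}$, $n_i\in\mathbb{Z}$, lies in $\mathcal{J}^{12}_{min}$. First I would record two reflection symmetries of the relators. The definition $R_{12}^{n_1,n_2,n_3}=R_{12}(n_1,n_2,n_3)-R_{12}(-n_1+k_1,-n_2+k_2,n_3)$ from Corollary \ref{D} gives at once the horizontal symmetry $R_{12}^{n_1,n_2,n_3}=-R_{12}^{-n_1+k_1,-n_2+k_2,n_3}$. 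Feeding the convention $S_{-m-2}=-S_m$ into the explicit expansion of $R_{12}$ (the three-variable analogue of Proposition \ref{Proposition 3.2}) gives the vertical symmetry $R_{12}(n_1,n_2,-n_3-2)=-R_{12}(n_1,n_2,n_3)$, hence $R_{12}^{n_1,n_2,n_3}=-R_{12}^{n_1,n_2,-n_3-2}$, together with the vanishing $R_{12}^{n_1,n_2,-1}=0$ (here $S_{-1}=0$ and $s_3^{0}+s_3^{-2}=0$).

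The vertical symmetry lets me assume $n_3\ge 0$ throughout, the fixed level $n_3=-1$ contributing nothing; this matches the standing condition $n_3\ge 0$ in the definition of $J^{12}_{I_1,I_2}$. The horizontal symmetry acts on the $(n_1,n_2)$-index as the point reflection through $(k_1/2,k_2/2)$, and I would run a fundamental-domain argument on the orbits $\{(n_1,n_2),(k_1-n_1,k_2-n_2)\}$. The two first coordinates sum to $k_1$ and the two second coordinates sum to $k_2$, so at least one representative has second coordinate $\ge k_2/2$. When $n_1\in[0,k_1]$ both first coordinates lie in $[0,k_1]$, and choosing the representative with second coordinate $\ge k_2/2$ places it in $J^{12}_{(\frac{k_1}{2},k_1],(\frac{k_2}{2},)}\cup J^{12}_{[0,\frac{k_1}{2}],[\frac{k_2}{2},)}$; the open/closed conventions and the split at $k_1/2$ in Regions B and C are exactly what single out one representative on the fixed line $n_2=k_2/2$, and this is the only place the parity of $k_1,k_2$ enters. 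When $n_1\notin[0,k_1]$ the representative with first coordinate $>k_1$ lies in $J^{12}_{(k_1,),[0,)}$ as soon as its second coordinate is $\ge 0$. Thus the only generators not yet absorbed are those equivalent to $R_{12}^{m_1,m_2,n_3}$ with $m_1>k_1$, $m_2<0$, $n_3\ge 0$; reflecting, these are precisely the relators $R_{12}^{-n_1,k_2+n_2,n_3}$ with $n_1,n_2\ge 1$ heading Lemmas \ref{Lemma formula 01} and \ref{Lemma formula 02}.

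To clear this last family I would induct on $n_1$ (equivalently on a lexicographic measure in $(n_1,n_2)$). Lemma \ref{Lemma formula 01} rewrites $R_{12}^{-n_1,k_2+n_2,n_3}$ as a $\mathbb{Z}[A^{\pm}]$-combination of the good term $R_{12}^{-n_1+k_1,n_2,n_3}$ and of double sums whose first indices lie in $[0,n_1]\cup[k_1,n_1+k_1]$. The good term has first index $k_1-n_1$: if $n_1\le k_1$ it falls in the already-absorbed range $[0,k_1]$, while if $n_1>k_1$ its reflection is $R_{12}^{n_1,k_2-n_2,n_3}$, which is either in Region A (when $n_2\le k_2$) or a member of the same bad family with $n_1$ decreased by $k_1\ge 1$. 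Each summand is likewise either terminal (first index in $[0,k_1]$, or first index $>k_1$ with nonnegative second index, hence in Region A) or a bad relator of strictly smaller measure; Lemma \ref{Lemma formula 0} is the base case $n_1=1$, and Lemma \ref{Lemma formula 02} supplies the symmetric reduction in the $n_2$-direction. Well-founded induction then places every $R_{12}^{-n_1,k_2+n_2,n_3}$ in $\mathcal{J}^{12}_{min}$, completing the reverse inclusion.

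The main obstacle is the bookkeeping in this last step: I must verify that in the expansions of Lemmas \ref{Lemma formula 01} and \ref{Lemma formula 02} \emph{every} summand is genuinely either terminal or strictly smaller in the chosen well-ordering, tracking all three coordinates (including the shifts $n_3-i+2j$ in the third slot) through all eight double sums and across the parity cases of $k_1,k_2$. Choosing the induction measure so that no summand stalls, in particular handling the terms whose first index sits exactly at $k_1$ and the boundary relators on the fixed lines $n_1=k_1/2$, $n_2=k_2/2$, is where the real care is needed; the two symmetries and the three relation lemmas of Section 3.3 are precisely the tools assembled for this.
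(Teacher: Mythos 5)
Your proposal is correct and takes essentially the same route as the paper: your two symmetries and the reduction to $n_3\ge 0$ are the paper's Lemma \ref{lemma1}, your well-founded induction absorbing the residual family $R_{12}^{-n_1,k_2+n_2,n_3}$ ($n_1,n_2\ge 1$) via Lemmas \ref{Lemma formula 01} and \ref{Lemma formula 02} is the paper's Lemma \ref{lemma2}, the corner case handled by Lemma \ref{Lemma formula 0} (needed precisely at $(n_1,n_2)=(1,1)$, equivalently $R_{12}^{k_1+1,-1,n_3}$, where reflection returns the same index pair) is the paper's Lemma \ref{lemma3}, and your final point-reflection through $(k_1/2,k_2/2)$ matches the paper's concluding fundamental-domain step. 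The bookkeeping you flag as the main obstacle is resolved in the paper exactly as you anticipate: applying Lemma \ref{Lemma formula 01} when $n_2\ge n_1$ and Lemma \ref{Lemma formula 02} when $n_1>n_2\ge 1$ keeps every summand's first index $\ge 0$ (the good term at $-n_1+k_1$) and second index $\ge -1$, so no summand stalls, with the temporary $n_2=-1$ boundary cleaned up at the end.
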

\begin{figure}
    \centering

    \begin{tikzpicture}
  \begin{axis}[
    xlabel={$n_1$},
    ylabel={$n_2$},
    axis lines=middle,
    xmin=0, xmax=15,
    ymin=0, ymax=15,
    xtick={0,3,6,10},
    ytick={0,5},
    xticklabels={$0$, $\frac{k_1}{2}$, $k_1$, },
    yticklabels={$0$, $\frac{k_2}{2}$},
    enlargelimits,
    legend style={at={(1.05,1)},anchor=north west}
  ]

  % Region 1: {k1 < n1, n2 >= 0}
  \addplot [
    domain=6:15,
    samples=100,
    fill=blue!20,
    draw=none
  ] {15} \closedcycle;

  % Region 2: {k1/2 < n1 <= k1, n2 > k2/2}
  \addplot [
    fill=red!30,
    draw=none
] coordinates {
    (3,5) (6,5) (6,15) (3,15)
} \closedcycle;

  % Region 3: {0 <= n1 <= k1/2, n2 >= k2/2}
\addplot [
    fill=green!30,
    draw=none
] coordinates {
    (0,5) (3,5) (3,15) (0,15)
} \closedcycle;

  % Reference lines
  \addplot[blue,dashed] coordinates {(6,0)(6,5)}; % n1 = k1
    \addplot[red,dashed] coordinates {(3,5)(6,5)}; 
  \addplot[dashed] coordinates {(3,0)(3,5)}; 
  \addplot[green, thick] coordinates {(0,5)(3,5)};

  \end{axis}

\end{tikzpicture}
 \caption{$J^{12}_{min}$}
    \label{J}
\end{figure}

     We need Lemma \ref{lemma1}, \ref{lemma2} and \ref{lemma3} for the proof of Proposition \ref{Jmin}.
    \begin{remark}
        For convenience, we write $(k,)=(k,\infty)$, $[k,)=[k,\infty)$, $()=(-\infty,\infty)$ and so on.
    \end{remark}

        \begin{lemma}\label{lemma1}
        $\mathcal{J}^{12}=\mathcal{J}^{12}_{(),\left[\frac{k_2}{2},\right)}$.
            \begin{proof}
            By definitions, $\mathcal{J}^{12}_{(),\left[\frac{k_2}{2},\right)}\subset\mathcal{J}^{12}$, so we only need to show $\mathcal{J}^{12}\subset\mathcal{J}^{12}_{(),\left[\frac{k_2}{2},\right)}$. By definition(see Corollary \ref{D}), we have $$R_{12}^{n_1,n_2,n_3}+R_{12}^{n_1,n_2,-n_3-2}=0,$$ $$R_{12}^{n_1,n_2,n_3}+R_{12}^{-n_1+k_1,-n_2+k_2,n_3}=0.$$  The upper equation tells us that $\mathcal{J}^{12}$ is generated by $R_{12}^{n_1,n_2,n_3}$ with $n_3 \ge -1$. When $n_3=-1$, $R_{12}^{n_1,n_2,-1}=0$ by definition, which concludes $\mathcal{J}^{12}\subset\mathcal{J}^{12}_{(),()}$. The lower equation tells us that the relators symmetric about $(\frac{k_1}{2},\frac{k_2}{2})$ are linearly dependent, which concludes $\mathcal{J}^{12}_{(),\left(,\frac{k_2}{2}\right)}\subset\mathcal{J}^{12}_{(),\left[\frac{k_2}{2},\right)}$ 
               . Thus $\mathcal{J}^{12}\subset\mathcal{J}^{12}_{(),()}=\mathcal{J}^{12}_{(),\left[\frac{k_2}{2},\right)}$.

            \end{proof}
        \end{lemma}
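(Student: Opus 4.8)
The plan is to produce two symmetry relations among the generators $R_{12}^{n_1,n_2,n_3}$ of $\mathcal{J}^{12}$ and then use them to collapse the ranges of the indices. Since the generating family $J^{12}_{(),[\frac{k_2}{2},)}$ is literally a subset of $\{R_{12}^{n_1,n_2,n_3}\}_{n_i\in\mathbb{Z}}$, the inclusion $\mathcal{J}^{12}_{(),[\frac{k_2}{2},)}\subset\mathcal{J}^{12}$ is automatic; the entire content is the reverse inclusion, i.e. showing that every generator of $\mathcal{J}^{12}$ already lies in $\mathcal{J}^{12}_{(),[\frac{k_2}{2},)}$.

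First I would establish the two identities
$$R_{12}^{n_1,n_2,n_3}+R_{12}^{n_1,n_2,-n_3-2}=0,\qquad R_{12}^{n_1,n_2,n_3}+R_{12}^{-n_1+k_1,-n_2+k_2,n_3}=0.$$
The second is immediate from Corollary \ref{D}: replacing $(n_1,n_2)$ by $(-n_1+k_1,-n_2+k_2)$ in $R_{12}^{n_1,n_2,n_3}=R_{12}(n_1,n_2,n_3)-R_{12}(-n_1+k_1,-n_2+k_2,n_3)$ merely interchanges the two $R_{12}(\cdot)$-terms, producing the negative. The first identity is where the work lies. I would substitute $n_3\mapsto -n_3-2$ into the four-term expansion of $R_{12}(n_1,n_2,n_3)$ and invoke the negative-index convention $S_m=-S_{-m-2}$: under this substitution $s_3^{n_3}\mapsto -s_3^{n_3}$, while the two terms carrying $s_3^{n_3+1}$ and $s_3^{n_3-1}$ interchange and each acquires a sign. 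Tracking these changes yields $R_{12}(n_1,n_2,-n_3-2)=-R_{12}(n_1,n_2,n_3)$, and likewise for the $(-n_1+k_1,-n_2+k_2)$-term, so that $R_{12}^{n_1,n_2,-n_3-2}=-R_{12}^{n_1,n_2,n_3}$.

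With both relations in hand the reduction is short. The $n_3$-reflection shows that every generator with $n_3\le -2$ equals, up to sign, one with $-n_3-2\ge 0$, so $\mathcal{J}^{12}$ is generated by relators with $n_3\ge -1$; at the fixed value $n_3=-1$ a direct substitution (using $S_{-1}=0$ and $S_{-2}=-S_0$, so that the only surviving terms cancel) gives $R_{12}^{n_1,n_2,-1}=0$, whence $\mathcal{J}^{12}=\mathcal{J}^{12}_{(),()}$. Next, the central reflection in the $(n_1,n_2)$-plane sends any generator with $n_2<\frac{k_2}{2}$ to $\pm R_{12}^{-n_1+k_1,-n_2+k_2,n_3}$, and here $-n_2+k_2>\frac{k_2}{2}$ lies in the index set $[\frac{k_2}{2},)$ while $-n_1+k_1$ remains in $()$ and $n_3$ is unchanged; generators with $n_2\ge\frac{k_2}{2}$ are already in range. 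Hence $\mathcal{J}^{12}_{(),()}=\mathcal{J}^{12}_{(),[\frac{k_2}{2},)}$, and combining the two steps proves the claim.

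The main obstacle is the sign and index bookkeeping in the first relation: one must check that every monomial in the expansion of $R_{12}(n_1,n_2,n_3)$ flips sign under $n_3\mapsto -n_3-2$ via $S_m=-S_{-m-2}$, including the less transparent swap of the $s_3^{n_3+1}$ and $s_3^{n_3-1}$ contributions, so that the overall sign is uniform; the second relation and the subsequent folding of index ranges are then routine.
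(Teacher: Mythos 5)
Your proposal is correct and follows essentially the same route as the paper: the same two symmetry relations $R_{12}^{n_1,n_2,n_3}+R_{12}^{n_1,n_2,-n_3-2}=0$ and $R_{12}^{n_1,n_2,n_3}+R_{12}^{-n_1+k_1,-n_2+k_2,n_3}=0$, followed by the same folding of the index ranges via $n_3\ge-1$, the vanishing $R_{12}^{n_1,n_2,-1}=0$, and the central reflection in $(n_1,n_2)$. The only difference is that you explicitly verify the sign bookkeeping of the $n_3$-reflection through the convention $S_m=-S_{-m-2}$, which the paper leaves implicit under ``by definition,'' and your verification is accurate.
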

        \begin{lemma}\label{lemma2}
            $\mathcal{J}^{12}_{[-n,),[-1,)}\subset\mathcal{J}^{12}_{[-n+1,),[-1,)},n\ge 1$.
            \begin{proof}
            $\forall R_{12}^{-n,k_2+n_2,n_3}\in \mathcal{J}^{12}_{[-n,),[-1,)}$, we divide our prove into three cases: 
            \begin{enumerate}
                \item when $n_2\ge n$, $ R_{12}^{-n,k_2+n_2,n_3}\in \mathcal{J}^{12}_{[-n+1,),[-1,)}$ as it can be directly checked to be represented by linear compositions of elements in $\mathcal{J}^{12}_{[-n+1,),[-1,)}$ by Lemma \ref{Lemma formula 01}; 
                \item when $n>n_2\ge 1$, $ R_{12}^{-n,k_2+n_2,n_3}\in \mathcal{J}^{12}_{[-n+1,),[-1,)}$ as it can be directly checked to be represented by linear compositions of elements in $\mathcal{J}^{12}_{[-n+1,),[-1,)}$ by Lemma \ref{Lemma formula 02};
                \item when $n_2\leq 0$, $ R_{12}^{-n,k_2+n_2,n_3}=-R_{12}^{n+k_1,-n_2,n_3}\in \mathcal{J}^{12}_{[-n+1,),[-1,)}$.
                Then we show that $\forall R_{12}^{-n,k_2+n_2,n_3}\in \mathcal{J}^{12}_{[-n,),[-1,)}$,$\forall R_{12}^{-n,k_2+n_2,n_3}\in \mathcal{J}^{12}_{[-n+1,),[-1,)}$.
            \end{enumerate}

            \end{proof}
        \end{lemma}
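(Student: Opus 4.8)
The plan is to establish the two opposite inclusions. The inclusion $\mathcal{J}^{12}_{[-n+1,),[-1,)}\subseteq\mathcal{J}^{12}_{[-n,),[-1,)}$ is immediate, since the generating set $J^{12}_{[-n+1,),[-1,)}$ is literally a subset of $J^{12}_{[-n,),[-1,)}$. For the reverse inclusion, the generators of $\mathcal{J}^{12}_{[-n,),[-1,)}$ with first index $\ge -n+1$ already belong to $\mathcal{J}^{12}_{[-n+1,),[-1,)}$, so it suffices to show that every boundary generator $R_{12}^{-n,m,n_3}$ with $m\ge -1$ and $n_3\ge 0$ lies in $\mathcal{J}^{12}_{[-n+1,),[-1,)}$. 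I would write the second index as $m=k_2+n_2$ and split the argument according to the sign and size of $n_2$.

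When $n_2\le 0$, I would use the central symmetry relation $R_{12}^{n_1,n_2,n_3}+R_{12}^{-n_1+k_1,-n_2+k_2,n_3}=0$ coming from Corollary \ref{D}, which gives $R_{12}^{-n,k_2+n_2,n_3}=-R_{12}^{n+k_1,-n_2,n_3}$. Since $n+k_1\ge -n+1$, $-n_2\ge 0\ge -1$ and $n_3\ge 0$, the right-hand relator is already a generator of $\mathcal{J}^{12}_{[-n+1,),[-1,)}$. When $n_2\ge 1$, I would instead invoke the reduction identities of Lemmas \ref{Lemma formula 01} and \ref{Lemma formula 02}, each of which expresses $R_{12}^{-n,k_2+n_2,n_3}$ as a $\mathbb{Z}[A^{\pm}]$-linear combination of the single relator $R_{12}^{-n+k_1,n_2,n_3}$ together with the relators appearing in their double sums. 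I would apply Lemma \ref{Lemma formula 01} when $n_2\ge n$ and Lemma \ref{Lemma formula 02} when $1\le n_2<n$; both are applicable because the first index is $n\ge 1$ in the first range, and the second index satisfies $n_2\ge 1$ in the second.

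The core of the proof is then a bookkeeping check that every relator produced by these two identities is a genuine generator of the target module, i.e. has first index $\ge -n+1$, second index $\ge -1$, and (after normalization) third index $\ge 0$. The leading term $R_{12}^{-n+k_1,n_2,n_3}$ always qualifies because $k_1\ge 1$. For the double-sum terms one reads off from the summation ranges that the first indices (of the form $n-i$, $n-1-i$, $n+k_1-i$, and so on) stay $\ge 0$, while the third indices $n_3-i+2j$ and their variants can be pushed into $[0,\infty)$ using only the two relations $R_{12}^{n_1,n_2,-1}=0$ and $R_{12}^{n_1,n_2,n_3}+R_{12}^{n_1,n_2,-n_3-2}=0$, which leave the first two indices untouched. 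The delicate point, and the very reason for separating Lemmas \ref{Lemma formula 01} and \ref{Lemma formula 02}, is controlling the second index: in Lemma \ref{Lemma formula 01} the smallest second index arising is of order $n_2-n-1$, which is $\ge -1$ exactly when $n_2\ge n$, whereas the expansion in Lemma \ref{Lemma formula 02} keeps all second indices $\ge 0$ as soon as $n_2\ge 1$. I expect this verification of the second-index bounds in the various sums to be the main obstacle; once it is dispatched case by case, every term lands in $J^{12}_{[-n+1,),[-1,)}$, which yields $R_{12}^{-n,m,n_3}\in\mathcal{J}^{12}_{[-n+1,),[-1,)}$ and hence the stated inclusion.
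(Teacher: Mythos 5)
Your proposal is correct and takes essentially the same route as the paper: the identical three-way split on $n_2$, applying Lemma \ref{Lemma formula 01} when $n_2\ge n$, Lemma \ref{Lemma formula 02} when $1\le n_2<n$, and the symmetry $R_{12}^{-n,k_2+n_2,n_3}=-R_{12}^{n+k_1,-n_2,n_3}$ when $n_2\le 0$. Your bookkeeping --- the worst second index in Lemma \ref{Lemma formula 01} being $n_2-n-1\ge -1$ precisely when $n_2\ge n$, and negative third indices being normalized via $R_{12}^{n_1,n_2,-1}=0$ and $R_{12}^{n_1,n_2,n_3}+R_{12}^{n_1,n_2,-n_3-2}=0$ --- correctly makes explicit what the paper leaves as ``can be directly checked.''
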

        \begin{lemma}\label{lemma3}
            $\mathcal{J}^{12}_{[-1,),[0)}=\mathcal{J}^{12}_{[0,),[-1,)}=\mathcal{J}^{12}_{[0,),[0,)}$.
            \begin{proof}
            Firstly, we will show $\mathcal{J}^{12}_{[-1,),[0,)}=\mathcal{J}^{12}_{[0,),[0,)}$. Clearly, $\mathcal{J}^{12}_{[0,),[0,)}\subset\mathcal{J}^{12}_{[-1,),[0,)}$, so we only need to show $\mathcal{J}^{12}_{[-1,),[0,)}\subset\mathcal{J}^{12}_{[0,),[0,)}$. $\forall R_{12}^{-1,k_2+n_2,n_3}\in \mathcal{J}^{12}_{[-1,),[0,)}$, we have 
                \begin{enumerate}
                    \item when $n_2=1$, $ R_{12}^{-1,k_2+n_2,n_3}\in \mathcal{J}^{12}_{[0,),[0,)}$ by Lemma \ref{Lemma formula 0}; 
                    \item  when $n_2>1$,$ R_{12}^{-1,k_2+n_2,n_3}\in\mathcal{J}^{12}_{[0,),[0,)}$ by Lemma \ref{Lemma formula 01},
                    \item when $n_2<1$, $ R_{12}^{-n,k_2+n_2,n_3}=-R_{12}^{n+k_1,-n_2,n_3}\in \mathcal{J}^{12}_{[0,),[0,)}$.
                \end{enumerate}
                Secondly, we will show $\mathcal{J}^{12}_{[0,),[-1,)}=\mathcal{J}^{12}_{[0,),[0,)}$,we only need to show $\mathcal{J}^{12}_{[0,),[-1,)}\subset\mathcal{J}^{12}_{[0,),[0,)}$ for a similar reason. $\forall R_{12}^{n_1+k_1,-1,n_3}\in \mathcal{J}^{12}_{[0,),[-1,)}$,we have\begin{enumerate}
       \item when $n_1=1$,$R_{12}^{n_1+k_1,-1,n_3}\in \mathcal{J}^{12}_{[0,),[0,)}$ by  the equation $R_{12}^{n_1,n_2,n_3}=-R_{12}^{-n_1+k_1,-n_2+k_2,n_3}$ and Lemma \ref{Lemma formula 0}, where we take $R_{12}^{-1,k_2,n_3}$ as $-R_{12}^{1+k_1,-1,n_3}$.
       \item when $n_1>1$,$R_{12}^{n_1+k_1,-1,n_3}\in \mathcal{J}^{12}_{[0,),[0,)}$ by  the equation $R_{12}^{n_1,n_2,n_3}=-R_{12}^{-n_1+k_1,-n_2+k_2,n_3}$ and Lemma \ref{Lemma formula 02}, where we take $R_{12}^{-n_1,n_2+k_2,n_3}$ as $-R_{12}^{n_1+k_1,-1,n_3}$
       \item when $n_1<1$,$R_{12}^{n_1+k_1,-1,n_3}=-R_{12}^{-n_1,1+k_2,n_3}\in\mathcal{J}^{12}_{[0,),[0,)}$.
   \end{enumerate}
             
            \end{proof}
        \end{lemma}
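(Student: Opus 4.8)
The plan is to split Lemma~\ref{lemma3} into the two equalities $\mathcal{J}^{12}_{[-1,),[0,)}=\mathcal{J}^{12}_{[0,),[0,)}$ and $\mathcal{J}^{12}_{[0,),[-1,)}=\mathcal{J}^{12}_{[0,),[0,)}$, proving the first directly and deriving the second from it by the symmetry exchanging the two cone points. In each case one inclusion is immediate: since the generating set $J^{12}_{[0,),[0,)}$ is literally a subset of $J^{12}_{[-1,),[0,)}$ (the constraint $n_1\ge 0$ being stronger than $n_1\ge -1$), we get $\mathcal{J}^{12}_{[0,),[0,)}\subseteq\mathcal{J}^{12}_{[-1,),[0,)}$ for free, and likewise $\mathcal{J}^{12}_{[0,),[0,)}\subseteq\mathcal{J}^{12}_{[0,),[-1,)}$. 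So the whole content lies in the reverse inclusions, i.e.\ in showing that the extra boundary relators (those with first index $-1$, resp.\ second index $-1$) already lie in $\mathcal{J}^{12}_{[0,),[0,)}$.

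For $\mathcal{J}^{12}_{[-1,),[0,)}\subseteq\mathcal{J}^{12}_{[0,),[0,)}$ I would take an arbitrary boundary generator, write its second index as $k_2+n_2$ so that it reads $R_{12}^{-1,k_2+n_2,n_3}$ with $n_3\ge 0$, and split on $n_2$. When $n_2\le 0$ the central symmetry $R_{12}^{n_1,n_2,n_3}=-R_{12}^{-n_1+k_1,-n_2+k_2,n_3}$ turns it into $-R_{12}^{1+k_1,-n_2,n_3}$, whose indices $1+k_1\ge 0$ and $-n_2\ge 0$ place it in $\mathcal{J}^{12}_{[0,),[0,)}$ directly. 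When $n_2=1$ I apply Lemma~\ref{Lemma formula 0}, which rewrites $R_{12}^{-1,k_2+1,n_3}$ through $R_{12}^{0,k_2,n_3\pm1}$ and $R_{12}^{1,k_2-1,n_3}$; here $k_2\ge 1$ makes the second indices $k_2$ and $k_2-1$ nonnegative. When $n_2>1$ I apply Lemma~\ref{Lemma formula 01} with its parameter specialised to $n_1=1$: the double sums collapse to finitely many terms, and the only potentially dangerous one, the reflected $R_{12}^{-n_1+k_1,n_2,n_3}=R_{12}^{k_1-1,n_2,n_3}$, has first index $k_1-1\ge 0$ exactly because $k_1\ge 1$; a direct check of the remaining survivors shows each has first and second index $\ge 0$. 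The negative third indices $n_3-1$ appearing in the last two cases are absorbed by the folding relations $R_{12}^{n_1,n_2,-1}=0$ and $R_{12}^{n_1,n_2,n_3}+R_{12}^{n_1,n_2,-n_3-2}=0$, which fix the first two indices and so stay inside $\mathcal{J}^{12}_{[0,),[0,)}$.

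For the second equality I would invoke the involution $\sigma$ of $S_{2,\infty}(H_2)$ exchanging $a_1\leftrightarrow a_2$ (equivalently $s_1\leftrightarrow s_2$). Since the closed formula for $R_{12}(n_1,n_2,n_3)$ is symmetric under simultaneously swapping $n_1\leftrightarrow n_2$ and $a_1\leftrightarrow a_2$, one checks $\sigma(R_{12}^{n_1,n_2,n_3})$ equals the corresponding relator of $D^2(k_2,k_1)$ with its first two indices swapped; hence $\sigma$ carries $\mathcal{J}^{12}_{[0,),[-1,)}$ to the $[-1,),[0,)$ submodule of $D^2(k_2,k_1)$ and fixes $\mathcal{J}^{12}_{[0,),[0,)}$. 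Thus $\mathcal{J}^{12}_{[0,),[-1,)}=\mathcal{J}^{12}_{[0,),[0,)}$ for $D^2(k_1,k_2)$ is equivalent to the first equality for $D^2(k_2,k_1)$, which holds because that argument used only $k_1,k_2\ge 1$. Alternatively one can run the same three-case analysis directly, pushing $R_{12}^{m,-1,n_3}$ via the central symmetry onto the line of second index $k_2+1$ and reducing its first index with Lemma~\ref{Lemma formula 02}. Chaining the two equalities yields $\mathcal{J}^{12}_{[-1,),[0,)}=\mathcal{J}^{12}_{[0,),[0,)}=\mathcal{J}^{12}_{[0,),[-1,)}$.

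The step I expect to be the main obstacle is the $n_2>1$ reduction: one must verify that specialising Lemma~\ref{Lemma formula 01} to $n_1=1$ produces only relators with nonnegative first and second indices, i.e.\ that no term is thrown back onto the forbidden lines $n_1=-1$ or $n_2=-1$. This is precisely where $k_1,k_2\ge 1$ is indispensable, since it forces the single reflected term $R_{12}^{k_1-1,n_2,n_3}$ to have nonnegative first index, so that one application of the formula suffices and no induction on $n_1$ is needed. The remaining bookkeeping of the collapsed sums and of the negative-$n_3$ folds is routine but must be carried out carefully to confirm membership in $\mathcal{J}^{12}_{[0,),[0,)}$.
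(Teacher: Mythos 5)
Your proof is correct, and on the first equality it is essentially the paper's own argument: the same three-way split on $n_2$, with the central symmetry $R_{12}^{n_1,n_2,n_3}=-R_{12}^{-n_1+k_1,-n_2+k_2,n_3}$ handling $n_2<1$, Lemma \ref{Lemma formula 0} handling $n_2=1$, and Lemma \ref{Lemma formula 01} specialised to first parameter $1$ handling $n_2>1$; your verification that the collapsed sums leave only relators with first indices $0,1,k_1-1,k_1,k_1+1$ and second indices $n_2,n_2-1,n_2-2,k_2+n_2-1,k_2+n_2-2$, all nonnegative precisely because $k_1,k_2\ge 1$ and $n_2\ge 2$, matches the computation the paper relies on, as does your absorption of third index $-1$ via $R_{12}^{n_1,n_2,-1}=0$. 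Where you genuinely diverge is the second equality: the paper reruns the case analysis directly, reflecting $R_{12}^{n_1+k_1,-1,n_3}$ through the central symmetry to $-R_{12}^{-n_1,k_2+1,n_3}$ and then invoking Lemma \ref{Lemma formula 0} (for $n_1=1$) or Lemma \ref{Lemma formula 02} (for $n_1>1$) --- exactly the ``alternative'' you sketch in passing --- whereas your primary route is the involution $\sigma$ of $S_{2,\infty}(H_2)$ swapping $s_1\leftrightarrow s_2$. That route is legitimate: the closed formula for $R_{12}(n_1,n_2,n_3)$ is symmetric under $(s_1,n_1)\leftrightarrow(s_2,n_2)$, so $\sigma$ carries the relator family of $D^2(k_1,k_2)$ to that of $D^2(k_2,k_1)$ with the first two indices exchanged, and since the first-equality argument uses only $k_1,k_2\ge 1$ and never the section's standing assumption $k_2\ge k_1$, it applies with the roles of the cone points exchanged. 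This buys economy --- no second round of index bookkeeping --- at the mild cost of checking that $\sigma$ intertwines the two relator families. One wording slip you should repair: $\sigma$ does not \emph{fix} $\mathcal{J}^{12}_{[0,),[0,)}$ unless $k_1=k_2$; it carries it onto the corresponding submodule for $D^2(k_2,k_1)$. Your next sentence states the intended equivalence correctly, so nothing in the argument actually breaks, but the claim as written is false and should be rephrased.
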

        
  \begin{proof}[\textbf{The proof of Proposition \ref{Jmin}}]
            
        The above lemmas combined together tell us that $\mathcal{J}^{12}$ is a submodule of $S_{2,\infty}(H_2)$ generated by $J^{12}_{[0,),[0,)}$:\begin{enumerate}
            \item  By Lemma  \ref{lemma1}, $\mathcal{J}^{12}_{(),\left[\frac{k_2}{2},\right)}=\mathcal{J}^{12}$. Also we have $\mathcal{J}^{12}_{(),\left[\frac{k_2}{2},\right)}\subset\mathcal{J}^{12}_{(),[-1,)}\subset\mathcal{J}^{12}$ by definition, which implies $\mathcal{J}^{12}=\mathcal{J}^{12}_{(),[-1,)}$.
            \item$\mathcal{J}^{12}_{(),[-1,)}=\mathcal{J}^{12}_{[0,),[-1,)}$by Lemma \ref{lemma2}.
            \item $\mathcal{J}^{12}_{[0,),[-1,)}=\mathcal{J}^{12}_{[0,),[0,)}$ by Lemma \ref{lemma3}. 
        \end{enumerate}
       Next, we will show that $\mathcal{J}^{12}_{[0,),[0,)}$ is generated by $J^{12}_{min}$. $\forall R_{12}^{n_1,n_2,n_3}\in J^{12}_{\left[0,\frac{k_1}{2}\right),[0,\frac{k_2}{2})}$,  $R_{12}^{n_1,n_2,n_3}=-R_{12}^{-n_1+k_1,-n_2+k_2,n_3}\in J^{12}_{(\frac{k_1}{2},k_1],(\frac{k_2}{2},k_2]}$, $\forall R_{12}^{n_1,n_2,n_3}\in J^{12}_{\left[\frac{k_1}{2},k_1\right],\left[0,\frac{k_2}{2}\right]}$, $R_{12}^{n_1,n_2,n_3}=-R_{12}^{-n_1+k_1,-n_2+k_2,n_3}\in J^{12}_{\left[0,\frac{k_1}{2}\right],\left[\frac{k_2}{2},k_2\right]}$.Intuitively, the remaining part of Figure \ref{J} can be symmetrically sent to the colored part through point $(\frac{k_1}{2},\frac{k_2}{2})$, $R_{12}^{\frac{k_1}{2},\frac{k_2}{2},n_3}=0$ when $
        \frac{k_1}{2},\frac{k_2}{2}\in \mathbb{Z}$. Therefore, the proposition is correct when $k_2\ge k_1\ge 1$.
        \end{proof}
\subsection{$S_{2,\infty}(D^2(k_1,k_2))$, $k_i\ge 1$}
Now, we are ready to give a finer presentation of $S_{2,\infty}(D^2(k_1,k_2))$, when $k_i\ge 1$.
\begin{lemma}
\label{P base chain}
    Let $\left\{G_n\right\}_{n\ge 0}$ be a family set that satisfies the condition $G_n\subset G_{n+1}$, and $G_{\infty}=\cup_{i=0}^{\infty}G_i$. Let $\mathcal{G}_n$ and $\mathcal{G}_{\infty}$ are free modules generated by $G_n$ and $G_{\infty}$ over $\mathbb{Z}[A^{\pm1}]$. $\left\{J_n\right\}_{n\ge 0}$ is a family of elements in $\mathcal{G}_{\infty}$, satisfying $J_n\subset J_{n+1}$, and $J_{\infty}=\cup_{i=0}^{\infty}J_i$. Let $\mathcal{J}_n$ and $\mathcal{J}_{\infty}$ are submodules of $\mathcal{G}_{\infty}$ generated by $J_n$ and $J_{\infty}$. If the following are met:\\ (1) $\mathcal{J}_0\subset\mathcal{G}_{0} $;\\(2)\ $\exists \eta:J_{\infty}\rightarrow G_{\infty}$, such that $\eta:J_n\setminus J_{n-1}\rightarrow G_n\setminus G_{n-1},\forall n\ge 1$ are bijections;\\(3)\ $\forall R\in J_n\setminus J_{n-1}$, $R=\pm A^k\eta(R)+\left(\in \mathcal{G}_{n-1}\right)$, $k\in \mathbb{Z}$.\\Then $\mathcal{G}_{\infty}/\mathcal{J}_{\infty}=\mathcal{G}_0/\mathcal{J}_{0}$.
    \begin{proof}
         Because (1)(2)(3), we have $J_n\subset \mathcal{G}_n,\ \forall n\ge 0$. There is a natural module homomorphism sequence, $$\mathcal{G}_0/\mathcal{J}_{0}\xrightarrow{i_0}\mathcal{G}_{1}/\mathcal{J}_{1}\xrightarrow{i_1}\mathcal{G}_{2}/\mathcal{J}_{2}\xrightarrow{i_2}\cdots \xrightarrow{i_{n-1}}\mathcal{G}_{n}/\mathcal{J}_{n}\xrightarrow{i_n}\mathcal{G}_{n+1}/\mathcal{J}_{n+1}\xrightarrow{i_{n+1}}\cdots$$ and $i_n$ is an isomorphism, because of (2)(3). \\ We claim that the natural homomorphism $i:\mathcal{G}_0/\mathcal{J}_{0}\rightarrow \mathcal{G}_{\infty}/\mathcal{J}_{\infty}$ is an isomorphism. Firstly $i$ is injective, because assuming $0\ne s\in \mathcal{G}_0/\mathcal{J}_{0}$, $i(s)=0$, then $s=\sum_i f_i(A)R_i,\ R_i\in J_{\infty}$, a sufficiently large $N$ can be found such that $R_i\in J_{N}$ so $i_{N}\cdots i_3i_2(s)=0$, but which is an isomorphism,
         contradictory. Secondly, $i$ is surjective, for $s\in \mathcal{G}_{\infty}/\mathcal{J}_{\infty}$, a sufficiently large $N$ can be found such that $i_{N}\cdots i_3i_2:\mathcal{G}_0/\mathcal{J}_{0}\rightarrow \mathcal{G}_{N}/\mathcal{J}_{N}(\ni s)$ is an isomorphism, so surjectivity holds.
         \label{Prop Reduce}
    \end{proof}
\end{lemma}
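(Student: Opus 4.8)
The plan is to realize $\mathcal{G}_\infty/\mathcal{J}_\infty$ as the end of the tower
\[
\mathcal{G}_0/\mathcal{J}_0 \xrightarrow{i_0} \mathcal{G}_1/\mathcal{J}_1 \xrightarrow{i_1} \mathcal{G}_2/\mathcal{J}_2 \xrightarrow{i_2} \cdots,
\]
where $i_n$ is induced by the inclusions $\mathcal{G}_n\hookrightarrow\mathcal{G}_{n+1}$ and $\mathcal{J}_n\subset\mathcal{J}_{n+1}$, and then to prove that every $i_n$ is an isomorphism, so that the whole system collapses onto its initial term $\mathcal{G}_0/\mathcal{J}_0$. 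Before anything else I would record the bookkeeping fact, forced by $(1),(2),(3)$, that $J_n\subset\mathcal{G}_n$ and hence $\mathcal{J}_n\subset\mathcal{G}_n$ for every $n$: a relator $R\in J_m\setminus J_{m-1}$ with $m\le n$ equals $\pm A^k\eta(R)$ plus an element of $\mathcal{G}_{m-1}$, and $\eta(R)\in G_m\setminus G_{m-1}\subset G_n$, so $R\in\mathcal{G}_n$. This legitimizes the maps $i_n$ in the first place.

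The heart of the matter is that each $i_n$ is an isomorphism, and here $(2)$ and $(3)$ act together through a triangular, Gaussian-elimination mechanism. For surjectivity I would take a new basis vector $g\in G_{n+1}\setminus G_n$; by $(2)$ it is $\eta(R)$ for a unique new relator $R\in J_{n+1}\setminus J_n$, and $(3)$ gives $R=\pm A^k g+(\text{element of }\mathcal{G}_n)$. Since $R\in\mathcal{J}_{n+1}$ and $\pm A^k$ is a unit in $\mathbb{Z}[A^{\pm1}]$, the class of $g$ equals that of an element of $\mathcal{G}_n$, whence $\mathcal{G}_{n+1}=\mathcal{G}_n+\mathcal{J}_{n+1}$ and $i_n$ is onto. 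For injectivity I would prove $\mathcal{J}_{n+1}\cap\mathcal{G}_n=\mathcal{J}_n$: writing $x\in\mathcal{G}_n\cap\mathcal{J}_{n+1}$ as $x=\sum_{R\in J_n}c_RR+\sum_{R\in J_{n+1}\setminus J_n}c_RR$ and reading off the coefficient of the new basis vector $\eta(R_0)$ for each new relator $R_0$, only the term $\pm A^{k}c_{R_0}$ survives, since the $J_n$-part lies in $\mathcal{G}_n$ and the remainders of all new relators lie in $\mathcal{G}_n$, both of which are invisible on $G_{n+1}\setminus G_n$. As $x\in\mathcal{G}_n$ this coefficient vanishes, and invertibility of $\pm A^{k}$ forces $c_{R_0}=0$ for every new relator, leaving $x=\sum_{R\in J_n}c_RR\in\mathcal{J}_n$.

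Finally I would pass to the limit by hand rather than invoke exactness of filtered colimits, to keep the argument elementary. The natural map $i:\mathcal{G}_0/\mathcal{J}_0\to\mathcal{G}_\infty/\mathcal{J}_\infty$ is surjective because any class has a representative lying in some $\mathcal{G}_N$ (as $\mathcal{G}_\infty=\cup_n\mathcal{G}_n$), and the composite $i_{N-1}\cdots i_0$ is an isomorphism onto $\mathcal{G}_N/\mathcal{J}_N$, so that representative is congruent modulo $\mathcal{J}_N\subset\mathcal{J}_\infty$ to one coming from $\mathcal{G}_0$. It is injective because a class killed by $i$ has a lift in $\mathcal{J}_\infty$, which, being a finite combination of relators drawn from some $J_N$, already lies in $\mathcal{J}_N$; injectivity of $i_{N-1}\cdots i_0$ then forces the original class to vanish.

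I expect the only genuine subtlety to be the injectivity of $i_n$, that is, the assertion that adjoining the new relators creates no unexpected relation among the old generators. This is precisely where the two hypotheses must be combined: the bijectivity in $(2)$ guarantees that distinct new relators are detected on distinct new basis vectors, and the unit leading coefficient in $(3)$ lets me solve for their coefficients and conclude that they all vanish. Everything else in the argument is formal.
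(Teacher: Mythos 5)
Your proposal is correct and takes essentially the same route as the paper: the tower $\mathcal{G}_0/\mathcal{J}_0\xrightarrow{i_0}\mathcal{G}_1/\mathcal{J}_1\xrightarrow{i_1}\cdots$ with each $i_n$ an isomorphism, followed by the same hands-on passage to the limit using that any generator or finite combination of relators already lives in some finite stage $N$. Your Gaussian-elimination argument for why each $i_n$ is an isomorphism (surjectivity via $\mathcal{G}_{n+1}=\mathcal{G}_n+\mathcal{J}_{n+1}$ and injectivity via $\mathcal{J}_{n+1}\cap\mathcal{G}_n=\mathcal{J}_n$, reading off coefficients on the new basis vectors) correctly supplies the details that the paper compresses into the assertion that $i_n$ is an isomorphism ``because of (2)(3)''.
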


\begin{definition}
\begin{align}
    &G^{k_1,k_2}=\left\{s_1^{n_1}s_2^{n_2}s_3^{n_3}|\frac{k_1}{2}\leq n_1\leq k_1,n_2\leq\frac{k_2}{2},n_i\ge 0\right\}\nonumber\\ &\ \ \ \ \ \ \ \cup\left\{s_1^{n_1}s_2^{n_2}s_3^{n_3}|n_1< \frac{k_1}{2},n_2< \frac{k_2}{2},n_i\ge 0\right\},\ k_2\ge k_1\ge 1,\nonumber
\end{align}

$\mathcal{G}^{k_1,k_2}$ denotes the free module generated by $G^{k_1,k_2}$.
\end{definition}

    \begin{figure}
        \centering
        
    \begin{tikzpicture}
  \begin{axis}[
    xlabel={$n_1$},
    ylabel={$n_2$},
    axis lines=middle,
    xmin=0, xmax=15,
    ymin=0, ymax=15,
    xtick={0,3,6,10},
    ytick={0,5},
    xticklabels={$0$, $\frac{k_1}{2}$, $k_1$, },
    yticklabels={$0$, $\frac{k_2}{2}$},
    enlargelimits,
    legend style={at={(1.05,1)},anchor=north west}
  ]

  % Region 2: {k1/2 < n1 <= k1, n2 > k2/2}
  \addplot [
    fill=red!30,
    draw=none
] coordinates {
    (3,5) (6,5) (6,0) (3,0)
} \closedcycle;

  % Region 3: {0 <= n1 <= k1/2, n2 >= k2/2}
\addplot [
    fill=green!30,
    draw=none
] coordinates {
     (0,0) (0,3) (0,5) (3,5)
} \closedcycle;

  % Reference lines
 % n1 = k1
    \addplot[red,thick] coordinates {(3,5)(6,5)}; 

  \addplot[green, dashed] coordinates {(0,5)(3,5)};

  \end{axis}
\end{tikzpicture}
 \caption{$G^{k_1,k_2}$}
        \label{G}
    \end{figure}

\begin{theorem}
\label{thm_D^2}
    $S_{2,\infty}(D^2(k_1,k_2))\cong \bigoplus _{\alpha\in G^{k_1,k_2}}\mathbb{Z}[A^{\pm 1}]\alpha$, $k_2\ge k_1\ge 1$.
    \end{theorem}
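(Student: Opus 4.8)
The plan is to read off the module structure from Proposition \ref{Jmin} through the reduction principle of Lemma \ref{P base chain}. By Corollary \ref{D} and Proposition \ref{Jmin} we have $S_{2,\infty}(D^2(k_1,k_2)) = S_{2,\infty}(H_2)/\mathcal{J}^{12}$, with $\mathcal{J}^{12}$ generated by the three families constituting $J^{12}_{min}$, and $S_{2,\infty}(H_2)$ free on the monomials $\{s_1^{n_1}s_2^{n_2}s_3^{n_3}\}_{n_i\ge 0}$. The first thing I would record is that the leading monomials $s_1^{n_1}s_2^{n_2}s_3^{n_3}$ of the relators $R_{12}^{n_1,n_2,n_3}\in J^{12}_{min}$ tile exactly the complement of $G^{k_1,k_2}$: comparing Figures \ref{J} and \ref{G}, the family with $n_1>k_1$ covers the region $n_1>k_1$, while the two families with $n_2\ge k_2/2$ cover the rest of the complement, the self-mirror locus $n_1=k_1/2,\,n_2=k_2/2$ (where $R_{12}^{k_1/2,k_2/2,n_3}=0$) being removed. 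This supplies the candidate map $\eta(R_{12}^{n_1,n_2,n_3})=s_1^{n_1}s_2^{n_2}s_3^{n_3}$ of Lemma \ref{P base chain}, with bottom set $G_0=G^{k_1,k_2}$ and $J_0=\emptyset$, so that $\mathcal{J}_0=0$ and $\mathcal{G}_0/\mathcal{J}_0=\mathcal{G}^{k_1,k_2}$, matching hypothesis (1) and the desired conclusion.

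Next I would expand each relator. Using the explicit formula for $R_{12}(n_1,n_2,n_3)$ and the definition $R_{12}^{n_1,n_2,n_3}=R_{12}(n_1,n_2,n_3)-R_{12}(-n_1+k_1,-n_2+k_2,n_3)$ from Corollary \ref{D}, every $R_{12}^{n_1,n_2,n_3}$ is a $\mathbb{Z}[A^{\pm1}]$-combination of at most eight basis monomials: four direct ones obtained from $(n_1,n_2,n_3)$ by the index shifts $(0,0,0)$, $(-2,-2,0)$, $(-1,-1,1)$, $(-1,-1,-1)$, and four mirror ones obtained from $(k_1-n_1,k_2-n_2,n_3)$ by the same shifts and then rewritten in the basis via $S_m=-S_{-m-2}$ (which also annihilates the $m=-1$ terms). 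The leading monomial appears with the unit coefficient $-A^{-n_1-n_2-2}$, so each relator already has the shape $\pm A^{k}\eta(R)+(\text{other monomials})$ required by hypothesis (3).

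The crux, and the step I expect to cost the most, is to build a filtration for which $\eta(R)$ is the \emph{strictly} dominant monomial, so the remaining terms fall into $\mathcal{G}_{n-1}$. A single integer grading cannot suffice: the complement is unbounded both in $n_1$ and in $n_2$, and a naive grading such as $n_1+n_2$ lets a mirror monomial overtake the intended leading term once $k_2\gg k_1$. I would therefore apply Lemma \ref{P base chain} twice. In the first pass I grade by $n_1$ and fold the region $n_1>k_1$: the three non-leading direct shifts lower $n_1$ by at least one, and each mirror $s_1$-exponent $k_1-n_1-\{0,1,2\}$ is negative and rewrites to a Chebyshev index at most $n_1-k_1<n_1$, so the leading monomial strictly dominates and the pass has order type $\omega$. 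In the second pass, now over the free module on $\{n_1\le k_1\}$, I grade by $n_2$ with a secondary grade by $-n_1$ and fold the two families with $n_2\ge k_2/2$: the direct shifts lower $n_2$, while each mirror $s_2$-index rewrites to a nonnegative Chebyshev index that is $<n_2$ except for the leading mirror term along $n_2=k_2/2$, where the secondary grade $-n_1$ (using $n_1\le k_1/2\le k_1-n_1$) breaks the tie in favour of the leading monomial. The bookkeeping I expect to be delicate is precisely this mirror analysis together with the parity of $k_1,k_2$: when $k_i$ is even the involution $(n_1,n_2)\mapsto(k_1-n_1,k_2-n_2)$ has near-fixed behaviour on the lines $n_i=k_i/2$, and one must invoke $R_{12}^{k_1/2,k_2/2,n_3}=0$ and the antisymmetry $R_{12}^{n_1,n_2,n_3}=-R_{12}^{-n_1+k_1,-n_2+k_2,n_3}$ to confirm that $\eta$ is an honest bijection onto the complement with no relator double-counted. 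Once hypotheses (1)--(3) are verified in both passes, Lemma \ref{P base chain} yields $S_{2,\infty}(D^2(k_1,k_2))\cong\mathcal{G}^{k_1,k_2}=\bigoplus_{\alpha\in G^{k_1,k_2}}\mathbb{Z}[A^{\pm1}]\alpha$.
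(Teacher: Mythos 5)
Your proposal is correct and follows essentially the same route as the paper: the same $\eta$, the same expansion of $R_{12}^{n_1,n_2,n_3}$ into a unit-coefficient leading monomial plus shifted and mirrored terms, and the same double application of Lemma \ref{P base chain} --- first folding the region $n_1>k_1$ graded by $n_1$ (the paper's Lemma \ref{L Dkk>0 1}), then folding $n_2\ge \frac{k_2}{2}$ over $n_1\in[0,k_1]$ graded by $n_2$ (Lemma \ref{L Dkk>0 0}). Your secondary grade by $-n_1$ in the second pass is an admissible but unnecessary refinement: the paper resolves the tie along $n_2=\frac{k_2}{2}$ simply because the split of $J^{12}_{min}$ at $n_1=\frac{k_1}{2}$ forces the mirrored leading monomial to land inside $G^{k_1,k_2}$ itself.
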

    
    The main content of the proof is the following two lemmas. \\
\begin{definition}For convenience, we define here a morphism, \[
\begin{array}{c}
\eta : \left\{ R_{12}^{n_1, n_2, n_3} |n_i \geq 0\right\}\to \left\{ s_1^{n_1} s_2^{n_2} s_3^{n_3} |n_i \geq 0\right\} \\ \\
\phantom{\eta : } R_{12}^{n_1, n_2, n_3} \mapsto s_1^{n_1} s_2^{n_2} s_3^{n_3}
\end{array}
\]

\end{definition}
    \begin{definition}
          $J_{0,n}^{12}=J_{[0,k_1],[0,n]}^{12}\cap J^{12}_{min}$, $G^{k_1,k_2}_{0,n}=G^{k_1,k_2}\cup\eta J_{0,n}^{12}$. $\mathcal{J}_{0,n}^{12}$ are submodules of $S_{2,\infty}(H_2)$ generated by $J_{0,n}^{12}$,  $\mathcal{G}_{0,n}^{k_1,k_2}$ are free modules generated by $G_{0,n}^{k_1,k_2}$.
    \end{definition}

$S_{2,\infty}(D^2(k_1,k_2))$ is isomorphic to the free module generated by $ G^{k_1,k_2}$, where $ G^{k_1,k_2}$ is defined as 
    $$G^{k_1,k_2}=\left\{s_1^{n_1}s_2^{n_2}s_3^{n_3}|\frac{k_1}{2}\le n_1\leq k_1,n_2\le\frac{k_2}{2},n_i\ge 0\right\}\cup\left\{s_1^{n_1}s_2^{n_2}s_3^{n_3}|n_1< \frac{k_1}{2},n_2<\frac{k_2}{2},n_i\ge 0\right\},k_2\ge k_1\ge 1.$$
Again, we delay the proof of Theorem \ref{thm_D^2} after the following lemmas.

\begin{lemma}
\label{L Dkk>0 0}
$\mathcal{G}^{k_1,k_2}_{0,\infty}/\mathcal{J}_{0,\infty}^{12}=\mathcal{G}^{k_1,k_2}$.
    \begin{proof}
    \begin{enumerate}
        \item $J_{0,0}^{12}=\emptyset$, therefore $\mathcal{J}_{0,0}^{12}\subset\mathcal{G}_{0,0}$;
        \item $\eta:J_{0,n}^{12}\setminus J_{0,n-1}^{12}\rightarrow G_{0,n}^{k_1,k_2}\setminus G_{0,n-1}^{k_1,k_2},\ \forall n\ge 1$ are bijections;
        \item $\forall R_{12}^{n_1,n_2,n_3}\in J_{0,n}^{12}\setminus J_{0,n-1}^{12}=J^{12}_{[0,k_1],\left\{n\right\}}\setminus G^{k_1,k_2}$, 
    \end{enumerate}
          
        \begin{align}
        R_{12}^{n_1,n_2,n_3}=&-A^{-n_1-n_2-2}s_1^{n_1}s_2^{n_2}s_3^{n_3}+\left(-A^{-n_1-n_2+2}s_1^{n_1-2}s_2^{n_2-2}s_3^{n_3}\right.\nonumber\\&-A^{-n_1-n_2}s_1^{n_1-1}s_2^{n_2-1}s_3^{n_3+1}-A^{-n_1-n_2}s_1^{n_1-1}s_2^{n_2-1}s_3^{n_3-1}\nonumber\\&+A^{n_1+n_2-k_1-k_2-2}s_1^{-n_1+k_1}s_2^{-n_2+k_2}s_3^{n_3}+A^{n_1+n_2-k_1-k_2+2}s_1^{-n_1+k_1-2}s_2^{-n_2+k_2-2}s_3^{n_3}\nonumber\\&\left.+A^{n_1+n_2-k_1-k_2}s_1^{-n_1+k_1-1}s_2^{-n_2+k_2-1}s_3^{n_3+1}+A^{n_1+n_2-k_1-k_2}s_1^{-n_1+k_1-1}s_2^{-n_2+k_2-1}s_3^{n_3-1}\right)\nonumber\\=&-A^{-n_1-n_2-2}\eta \left(R_{12}^{n_1,n_2,n_3}\right)+\left(\in \mathcal{G}_{0,n-1}^{k_1,k_2}\right).\nonumber
        \end{align}
    Intuitively, we want to use the relations in the green and red part in Figure\ref{J} inductively up to down, showing that the rest of the elements are in Figure\ref{G}.\\
    Take $n_2=n$, as $n\ge k_2/2$,  we only need to show $s_1^{n_1-1}s_2^{n-1}s_3^{n_3-1}$ and $s_1^{-n_1+k_1-2}s_2^{-n+k_2-2}s_3^{n_3}$ are in $\mathcal{G}^{k_1,k_2}_{0,n}$ because they bound the exponents of terms except for these two terms. To show the terms are in $\mathcal{G}^{k_1,k_2}_{0,n}$ is to show  the exponent of the $n_2$-component are bounded by $0$ and $n-1$ and the exponent of the $n_1$-component are bounded by $0$ and $k_1$ according to the definition of $\mathcal{G}^{k_1,k_2}_{0,n}$. For the $n_2$-component $s_2^{n_2}$, $n_2-1$ is the highest exponent of the remaining terms, $-n_2+k_2-2$ is the lowest exponent of the remaining terms, it is clear that $n-1\le n$, so we only need to ensure $-n_2+k_2-2\ge0$, if not, $s_2^{-n_2+k_2-2}=-s_2^{n_2-k_1}$, where $n_2-k_2\ge -1$, it will infer that $-n_2+k_2\ge0$  as $s_2^{-1}=0$. For the $n_1$-component, We have that all exponents of $n_1$ of all terms are in $[-2,k_1]$ by direct checked, when $n_1=-2$ or $n_1=-1$, $s_1^{-2}=-s_1^0$, $s_1^{-1}=0$ which indicates the bound $-2$ and $k_1$ will be turned to $0$ and $k_1$ for the $n_1$-components. Then we know that $s_1^{n_1-1}s_2^{n-1}s_3^{n_3}$ and $s_1^{-n_1+k_1-2}s_2^{-n+k_2-2}s_3^{n_3}$ are actually in $\mathcal{G}^{k_1,k_2}_{0,n}$.  
     According to Proposition \ref{P base chain}, $\mathcal{G}^{k_1,k_2}_{0,\infty}/\mathcal{J}_{0,\infty}^{12}=\mathcal{G}^{k_1,k_2}_{0,0}/\mathcal{J}_{0,0}^{12}$, where $\mathcal{G}^{k_1,k_2}_{0,0}/\mathcal{J}_{0,0}^{12}=\mathcal{G}^{k_1,k_2}$.

     \end{proof}

\end{lemma}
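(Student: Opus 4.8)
The plan is to feed the filtration by the middle index $n_2$ into the abstract collapsing lemma, Lemma \ref{P base chain}, taking $G_n:=G^{k_1,k_2}_{0,n}$, $\mathcal{G}_n:=\mathcal{G}^{k_1,k_2}_{0,n}$, $J_n:=J^{12}_{0,n}$ and $\mathcal{J}_n:=\mathcal{J}^{12}_{0,n}$. First I would settle the base level: within the slab $0\le n_1\le k_1$ every relator of $J^{12}_{min}$ has $n_2\ge k_2/2\ge 1/2$, so $J^{12}_{0,0}=J^{12}_{[0,k_1],[0,0]}\cap J^{12}_{min}=\emptyset$; hence $\mathcal{J}^{12}_{0,0}=0\subset\mathcal{G}_{0,0}$, which gives hypothesis (1), and $G^{k_1,k_2}_{0,0}=G^{k_1,k_2}$, so the base quotient is $\mathcal{G}_{0,0}/\mathcal{J}_{0,0}=\mathcal{G}^{k_1,k_2}$ --- precisely the module we want at the end of the collapse.

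For hypothesis (2) I would identify the two $n$-th layers. The layer $J_n\setminus J_{n-1}$ is the set of relators $R_{12}^{n_1,n_2,n_3}\in J^{12}_{min}$ with $0\le n_1\le k_1$ and $n_2=n$, while $G_n\setminus G_{n-1}$ is the set of monomials $s_1^{n_1}s_2^{n}s_3^{n_3}$ whose position $(n_1,n)$ lies in the red or green region of Figure \ref{J}. Since $\eta$ sends $R_{12}^{n_1,n,n_3}$ to $s_1^{n_1}s_2^{n}s_3^{n_3}$, it restricts to the required bijection between these layers; the only point to watch is the boundary value $n=k_2/2$ (for $k_2$ even), where the half-open conventions in the definitions of the regions and of $G^{k_1,k_2}$, together with $R_{12}^{k_1/2,k_2/2,n_3}=0$, guarantee that no monomial is counted twice.

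The real content is hypothesis (3), that each new relator has the form $\pm A^k\eta(R)+(\text{something in }\mathcal{G}_{n-1})$. I would expand the defining expression from Corollary \ref{D},
\[
R_{12}^{n_1,n_2,n_3}=R_{12}(n_1,n_2,n_3)-R_{12}(-n_1+k_1,-n_2+k_2,n_3),
\]
by substituting the four-term Chebyshev formula for $R_{12}(\,\cdot\,,\,\cdot\,,\,\cdot\,)$ established in Section 3. This yields eight monomials; the summand $-A^{-n_1-n_2-2}s_1^{n_1}s_2^{n_2}s_3^{n_3}=-A^{-n_1-n_2-2}\eta(R_{12}^{n_1,n_2,n_3})$ is the distinguished one, and the other seven must be shown to lie in $\mathcal{G}_{n-1}$. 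Putting $n_2=n$, their $s_1$-exponents all lie in $\{n_1,n_1-1,n_1-2\}\cup\{-n_1+k_1,-n_1+k_1-1,-n_1+k_1-2\}\subset[-2,k_1]$, and the non-distinguished $s_2$-exponents among $\{n-1,n-2\}\cup\{-n+k_2,-n+k_2-1,-n+k_2-2\}$.

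The step I expect to be delicate is checking that, after the Chebyshev normalizations $s_i^{-1}=S_{-1}=0$ and $s_i^{-2}=S_{-2}=-s_i^{0}$, each of these seven monomials genuinely represents an element of $G_{n-1}^{k_1,k_2}$. For the $s_1$-index every value in $[-2,k_1]$ is pushed into $[0,k_1]$ by $s_1^{-1}=0$ and $s_1^{-2}=-s_1^{0}$; for the $s_2$-index one uses $n\ge k_2/2$, which forces $-n+k_2\le n$, together with $s_2^{-1}=0$ to discard negative exponents. The subtlety is that membership in $G_{n-1}$ is governed by the \emph{region} of $(n_1,n_2)$ in Figure \ref{G}, not merely by the size of the $s_2$-exponent: for instance at $n=k_2/2$ a monomial coming from $R_{12}(-n_1+k_1,-n_2+k_2,n_3)$ can retain $s_2$-exponent $n$, yet its $(n_1,n_2)$-position falls into the red region already contained in $G^{k_1,k_2}$. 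Bounding all seven terms by the two extremal monomials $s_1^{n_1-1}s_2^{n-1}s_3^{n_3-1}$ and $s_1^{-n_1+k_1-2}s_2^{-n+k_2-2}s_3^{n_3}$ and checking only those reduces the work to a finite boundary analysis. Once (1)--(3) hold, Lemma \ref{P base chain} gives $\mathcal{G}^{k_1,k_2}_{0,\infty}/\mathcal{J}^{12}_{0,\infty}=\mathcal{G}_{0,0}/\mathcal{J}_{0,0}=\mathcal{G}^{k_1,k_2}$, which is the claim.
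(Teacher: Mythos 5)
Your proposal is correct and follows essentially the same route as the paper: filter by the $n_2$-index, verify hypotheses (1)--(3) of Lemma \ref{P base chain} with the base layer $J^{12}_{0,0}=\emptyset$, expand $R_{12}^{n_1,n_2,n_3}$ into eight Chebyshev monomials with distinguished leading term $-A^{-n_1-n_2-2}\eta(R_{12}^{n_1,n_2,n_3})$, and push the remaining seven into $\mathcal{G}^{k_1,k_2}_{0,n-1}$ via $s_i^{-1}=0$, $s_i^{-2}=-s_i^{0}$ and $n\ge k_2/2$. Your extra care at the boundary $n=k_2/2$ (where the mirrored term retains $s_2$-exponent $n$ but its $(n_1,n_2)$-position lands in the red region of $G^{k_1,k_2}$) is a point the paper's own proof glosses over, and it is resolved exactly as you indicate.
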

\begin{definition}
     $J_{n}^{12}= J_{\left[0,k_1\right]\cup\left[0,n\right],[0)}^{12}\cap J^{12}_{min}$, $G^{k_1,k_2}_n=G^{k_1,k_2}\cup \eta J_n^{12}$. $\mathcal{J}_{n}^{12}$ are submodules of $S_{2,\infty}(H_2)$ generated by $J_{n}^{12}$, $\mathcal{G}_{n}^{k_1,k_2}$ are free modules generated by $G_{n}^{k_1,k_2}$.
\end{definition}

\begin{lemma}
\label{L Dkk>0 1}
$\mathcal{G}^{k_1,k_2}_{\infty}/\mathcal{J}_{\infty}^{12}=\mathcal{G}_0^{k_1,k_2}/\mathcal{J}_{0}^{12}$.
    \begin{proof}
        (1) $J_{0}^{12}=J_{0,\infty}^{12}$, therefore $\mathcal{J}_{0}^{12}\subset\mathcal{G}_{0}$; (2) $\eta:J_{n}^{12}\setminus J_{n-1}^{12}\rightarrow G_{n}^{k_1,k_2}\setminus G_{n-1}^{k_1,k_2},\ \forall n\ge 1$ are bijections; (3) $\forall R_{12}^{n_1,n_2,n_3}\in J_{n}^{12}\setminus J_{n-1}^{12}=J^{12}_{(k_1,)\cap \left\{ n \right\},[0)}$, 
        \begin{align}
        R_{12}^{n_1,n_2,n_3}=&-A^{-n_1-n_2-2}\eta \left(R_{12}^{n_1,n_2,n_3}\right)+\left(\in \mathcal{G}_{n-1}^{k_1,k_2}\right),\nonumber
        \end{align}
        where $G_{n-1}^{k_1,k_2}=\eta J^{12}_{\left[0,k_1\right]\cup\left[0,n-1\right],[0)}$. According to Proposition \ref{P base chain}, $\mathcal{G}^{k_1,k_2}_{\infty}/\mathcal{J}_{\infty}^{12}=\mathcal{G}_0^{k_1,k_2}/\mathcal{J}_{0}^{12}$.
    \end{proof}
\end{lemma}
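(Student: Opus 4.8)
The plan is to apply the chain-reduction principle, Lemma \ref{P base chain}, to the increasing filtrations $\{J_n^{12}\}_{n\ge 0}$ and $\{G_n^{k_1,k_2}\}_{n\ge 0}$ whose unions generate $\mathcal{J}_\infty^{12}$ and $\mathcal{G}_\infty^{k_1,k_2}$. This filtration enlarges the admissible range of the first index $n_1$ one column at a time: for $n\le k_1$ one has $[0,k_1]\cup[0,n]=[0,k_1]$, so $J_n^{12}=J_0^{12}$, while for $n>k_1$ the increment $J_n^{12}\setminus J_{n-1}^{12}$ is exactly the column $J^{12}_{(k_1,)\cap\{n\},[0)}$ of relators $R_{12}^{n,n_2,n_3}$ whose first index equals $n$. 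It therefore suffices to verify the three hypotheses (1)--(3) of Lemma \ref{P base chain}.

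Hypothesis (1) is the base case and is supplied by the previous lemma: since $J_0^{12}=J^{12}_{[0,k_1],[0)}\cap J^{12}_{min}=J_{0,\infty}^{12}$ and correspondingly $G_0^{k_1,k_2}=G_{0,\infty}^{k_1,k_2}$, Lemma \ref{L Dkk>0 0} gives $\mathcal{G}_0^{k_1,k_2}/\mathcal{J}_0^{12}=\mathcal{G}^{k_1,k_2}$; in particular the generators of $\mathcal{J}_0^{12}$ expand in the basis $G_0^{k_1,k_2}$, so $\mathcal{J}_0^{12}\subset\mathcal{G}_0^{k_1,k_2}$. Hypothesis (2) is pure bookkeeping: by the definition $\eta(R_{12}^{n_1,n_2,n_3})=s_1^{n_1}s_2^{n_2}s_3^{n_3}$, the map $\eta$ carries the new column $\{R_{12}^{n,n_2,n_3}\}$ bijectively onto the new generators $\{s_1^{n}s_2^{n_2}s_3^{n_3}\}=G_n^{k_1,k_2}\setminus G_{n-1}^{k_1,k_2}$, the index ranges on the two sides agreeing by construction.

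The substance lies in hypothesis (3). For each new relator with $n_1=n>k_1$ I would expand, via Proposition \ref{Proposition 3.2} together with the defining symmetry $R_{12}^{n_1,n_2,n_3}=-R_{12}^{-n_1+k_1,-n_2+k_2,n_3}$, and read off
\[
R_{12}^{n,n_2,n_3}=-A^{-n-n_2-2}\,\eta(R_{12}^{n,n_2,n_3})+\big(\text{remaining monomials}\big),
\]
so the leading term has the required form $\pm A^{k}\eta(R_{12}^{n,n_2,n_3})$. It then remains to show the remaining monomials lie in $\mathcal{G}_{n-1}^{k_1,k_2}$. The direct part contributes first-index exponents $n-1$ and $n-2$, while the symmetric partner contributes $-n+k_1,\,-n+k_1-1,\,-n+k_1-2$, all negative since $n>k_1$; applying the Chebyshev conventions $s_1^{-1}=0$ and $s_1^{-2}=-s_1^{0}$ (and likewise in the second index) collapses each negative exponent to a nonnegative one, and since $k_1\ge 1$ the largest value produced is $n-k_1\le n-1$. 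Hence every non-leading monomial sits in a column of first index at most $n-1$, i.e. in $\mathcal{G}_{n-1}^{k_1,k_2}=\eta J^{12}_{[0,k_1]\cup[0,n-1],[0)}$, exactly as in the bounding computation of Lemma \ref{L Dkk>0 0} but now run in the $n_1$-direction.

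The main obstacle is precisely this reduction of the symmetric-partner monomials: one must confirm that their negative first index, after the Chebyshev sign conventions, lands inside $[0,n-1]$ rather than reproducing the top column $n_1=n$, and that the accompanying second-index exponents fall within the ranges already accounted for by $G^{k_1,k_2}$ or $\eta J_{n-1}^{12}$. Once (3) is checked the three hypotheses hold, and Lemma \ref{P base chain} delivers $\mathcal{G}_\infty^{k_1,k_2}/\mathcal{J}_\infty^{12}=\mathcal{G}_0^{k_1,k_2}/\mathcal{J}_0^{12}$, as claimed.
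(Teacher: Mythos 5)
Your proposal is correct and takes essentially the same route as the paper: the same filtration of $\mathcal{J}^{12}_{\infty}$ by the $n_1$-columns, the base case $\mathcal{J}_0^{12}\subset\mathcal{G}_0^{k_1,k_2}$ imported from Lemma \ref{L Dkk>0 0}, and verification of hypotheses (1)--(3) of Lemma \ref{P base chain}, with the crucial step (3) handled by expanding $R_{12}^{n,n_2,n_3}$ via Proposition \ref{Proposition 3.2} and collapsing negative exponents through the Chebyshev conventions $s_1^{-1}=0$, $s_1^{-2}=-s_1^{0}$ to land in first-index range $[0,n-1]$, exactly as in the paper's discussion following the lemma. Your bound $n-k_1\le n-1$ (using $k_1\ge 1$, $n\ge k_1+1$) is the same computation the paper records.
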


\begin{enumerate}
    \item $J_{0}^{12}=J_{0,\infty}^{12}$, therefore $\mathcal{J}_{0}^{12}\subset\mathcal{G}_{0}$ follows form Lemma 4.4;( or we can say like this:$\mathcal{J}_{0}^{12}\subset\mathcal{G}_{0}$,the discussion are similar to those in (3), so we omit the proof here);
    \item $\eta:J_{n}^{12}\setminus J_{n-1}^{12}\rightarrow G_{n}^{k_1,k_2}\setminus G_{n-1}^{k_1,k_2},\ \forall n\ge 1$ are bijections by definitions;
    \item When $n\le k_1$ , $J^{12}_{n}\setminus J^{12}_{n-1}=\emptyset$; when $k_1\le n-1$, $J^{12}_{n}\setminus J^{12}_{n-1}=J^{12}_{\{n\},[0,)}$, in this case, $n_1=n\ge k_1+1\ge2$, $G^{k_1,k_2}_{n-1}=\eta J^{12}_{[0,n-1],[0,)}$.  $\forall R_{12}^{n_1,n_2,n_3}\in J_{n}^{12}\setminus J_{n-1}^{12}=J^{12}_{\{n\},[0,)}$, we want to show that,
\end{enumerate}
             \begin{align}
        R_{12}^{n_1,n_2,n_3}=&-A^{-n_1-n_2-2}\eta \left(R_{12}^{n_1,n_2,n_3}\right)+\left(\in \mathcal{G}_{n-1}^{k_1,k_2}\right),\nonumber
        \end{align}
       Intuitively, we want to use the relations in the blue part in Figure\ref{J} inductively from left to right, showing that the rest of the elements are in $\mathcal{G}_0^{k_1,k_2}/\mathcal{J}_{0}^{12}$\\
      Showing that the terms except for $s_1^{n_1}s_2^{n_2}s_3^{n_3}$ are in $\mathcal{G}_{n-1}^{k_1,k_2}$ is showing that the exponent of $n_1$-components is bounded by $0$ and $n-1$, the exponent of $n_2$-components are greater than $0$. For the $n_1$-component, since $n_1>k_1$, $n_1-1$ is the greatest exponent and $-n_1+k_1-2$ is the least one, so we only need to show $n_1-1,-n_1+k_1-2\in[0,n-1]$.$n_1-1$ is easy to check.  $-n_1+k_1-2\le-3$, and we have $s_1^{-n_1+k_1-2}=-s_1^{n_1-k_1}$, where $n_1-k_1\in[-1,n-1]$, which will indicates $n_1-k_1\in[0,n-1]$ and $n_1-k_1-2\in[0,n-1]$  for the same discussion above. Then we can say all the exponents of the remaining term's $n_1$-component is in $[0,n-1]$. For the $n_2$-component, we only need to care about $-n_2+k_2-2$ as it is the smallest one. If $-n_2+k_2-2<0$,then $s_2^{-n_2+k_2-2}=-s_2^{n_2-k_2}$, where $n_2-k_2>-2$, if $n_2-k_2=-1$, $s_2^{-1}=0$. Then we can know that $s_1^{-n_1+k_2-2}s_2^{-n_2+k+2-2}s_3^{n_3} \in \mathcal{G}_{n-1}^{k_1,k_2}$ and $s_2^{-1}=0$. Then we can see that the discussions of $s_1^{n_1-1}s_2^{n_2-1}s_3^{n_3-1} \in \mathcal{G}_{n-1}^{k_1,k_2}$ other terms are similar.

 \begin{proof}[\textbf{ The proof of Theorem \ref{thm_D^2}}]
     Therefore,
    \begin{align}
        S_{2,\infty}(D^2(k_1,k_2))&=S_{2,\infty}(H_2)/\mathcal{J}^{12}_{min}\nonumber\\&=\mathcal{G}^{k_1,k_2}_{\infty}/\mathcal{J}_{\infty}^{12}\xlongequal{Lemma\ \ref{L Dkk>0 1} }\mathcal{G}_0^{k_1,k_2}/\mathcal{J}_{0}^{12}\nonumber\\&=\mathcal{G}^{k_1,k_2}_{0,\infty}/\mathcal{J}_{0,\infty}^{12}\xlongequal{Lemma\ \ref{L Dkk>0 0} }\mathcal{G}^{k_1,k_2}.\nonumber
    \end{align}
 \end{proof}

\begin{corollary}

The empty link is not trivial in $S_{2,\infty}(D^2(k_1,k_2))$.
    
\end{corollary}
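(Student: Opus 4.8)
The plan is to read off the nontriviality of the empty link directly from the free basis obtained in Theorem~\ref{thm_D^2}, so that essentially all the real work is already done. Recall that the empty link in $H_2$ is represented by $S_0(a_1)S_0(a_2)S_0(a_3) = s_1^0 s_2^0 s_3^0$, and that the quotient map $S_{2,\infty}(H_2) \to S_{2,\infty}(D^2(k_1,k_2))$ sends the empty link in $H_2$ to the empty link in $D^2(k_1,k_2)$. Hence, under the isomorphism of Theorem~\ref{thm_D^2}, the empty link corresponds to the element indexed by the multi-index $(n_1,n_2,n_3) = (0,0,0)$.

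The only point to verify is that $s_1^0 s_2^0 s_3^0$ actually lies in the generating set $G^{k_1,k_2}$, so that it survives as one of the free basis elements rather than being absorbed into the relation submodule. I would check this against the second set in the definition of $G^{k_1,k_2}$, namely $\{s_1^{n_1}s_2^{n_2}s_3^{n_3}\mid n_1<\tfrac{k_1}{2},\ n_2<\tfrac{k_2}{2},\ n_i\ge 0\}$. Since $k_1,k_2\ge 1$, we have $\tfrac{k_1}{2}\ge\tfrac12>0$ and $\tfrac{k_2}{2}\ge\tfrac12>0$, so $(0,0,0)$ indeed satisfies $n_1<\tfrac{k_1}{2}$, $n_2<\tfrac{k_2}{2}$ and $n_i\ge 0$, placing it in $G^{k_1,k_2}$.

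Therefore the empty link is one of the free generators of the $\mathbb{Z}[A^{\pm 1}]$-module $S_{2,\infty}(D^2(k_1,k_2))\cong\bigoplus_{\alpha\in G^{k_1,k_2}}\mathbb{Z}[A^{\pm 1}]\alpha$, and a basis element of a free module is nonzero. There is no genuine obstacle to overcome here: the substantive content lives entirely in Theorem~\ref{thm_D^2}, and the corollary is simply the observation that the trivial multi-index $(0,0,0)$ is not excluded by the membership conditions defining $G^{k_1,k_2}$ when $k_1,k_2\ge 1$, hence persists as a nonzero basis vector.
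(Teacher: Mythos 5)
Your proposal is correct and follows exactly the same route as the paper, which simply observes that $s_1^0 s_2^0 s_3^0$ lies in $G^{k_1,k_2}$ and hence the empty link is a free basis element of $S_{2,\infty}(D^2(k_1,k_2))\cong\bigoplus_{\alpha\in G^{k_1,k_2}}\mathbb{Z}[A^{\pm 1}]\alpha$. Your explicit verification that $(0,0,0)$ satisfies $n_1<\tfrac{k_1}{2}$, $n_2<\tfrac{k_2}{2}$, $n_i\ge 0$ when $k_1,k_2\ge 1$ just spells out the detail the paper leaves as ``clearly.''
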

\begin{proof}

Clearly, $s_1^{0}s_2^{0}s_3^{0}$ is in $G^{k_1,k_2}$.
    
\end{proof}

\section{$S_{2,\infty}(S^2(k_1,k_2,k_3)),k_i\ge 2$ are finite generated}

Notice that, by comparing fundamental groups, we learn that $S^2(k_1,k_2,k_3)$ degenerates into a lens space or connected sum of lens spaces when $\min\left\{|k_1|,|k_2|,|k_3|\right\}\leq 1$. We have the following proposition.
\begin{proposition}[\cite{2016An}]
$$\begin{aligned}
    &S^2(k_1,k_2,0)=L(k_1,1)\# L(k_2,1),\\
    &S^2(k_1,k_2,1)=L(k_1k_2+k_1+k_2,k_1+1).
\end{aligned}$$
\end{proposition}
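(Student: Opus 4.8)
The plan is to bypass the skein-module machinery entirely and identify these two manifolds by Kirby calculus on the surgery diagram of Figure \ref{Two models of $S^2(k_1,k_2,k_3)$}. Recall that $S^2(k_1,k_2,k_3)$ is presented by a central $0$-framed unknot $U_0$ together with three meridians $U_1,U_2,U_3$ linking $U_0$ once, carrying the integer framings $k_1,k_2,k_3$. I would first observe that the specialisations $k_3=0$ and $k_3=1$ turn the third meridian into a component that can be removed by a single elementary Kirby move, after which only a two-component sublink survives.

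For $S^2(k_1,k_2,0)$, the curve $U_3$ is now a $0$-framed meridian of $U_0$. I would apply a slam-dunk to $U_3$, which deletes it and changes the framing of $U_0$ from $0$ to $0-1/0=\infty$. An $\infty$-framed component is a trivial surgery, so $U_0$ may be erased; since $U_1$ and $U_2$ met only $U_0$, they are left as two unknots that are now split and unlinked, with framings $k_1$ and $k_2$. Integer surgery $k_i$ on a split unknot produces $L(k_i,1)$, and a split union of surgery diagrams yields a connected sum, giving $L(k_1,1)\#L(k_2,1)$.

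For $S^2(k_1,k_2,1)$, the curve $U_3$ is a $(+1)$-framed meridian of $U_0$, so I would blow it down; since only $U_0$ threads its spanning disk once, this changes the framing of $U_0$ from $0$ to $-1$ and leaves everything else intact. Now $U_0$ is a $(-1)$-framed unknot threaded once by each of $U_1$ and $U_2$. Blowing $U_0$ down introduces a single full twist on the two strands passing through it: $U_1$ and $U_2$ become a Hopf link, their framings change from $k_i$ to $k_i+1$, and their mutual linking becomes $1$. Surgery on a Hopf link with framings $a,b$ is the lens space $L(ab-1,b)$ obtained from the continued fraction $a-1/b=(ab-1)/b$; with $a=k_1+1$, $b=k_2+1$ this is $L((k_1+1)(k_2+1)-1,\,k_2+1)=L(k_1k_2+k_1+k_2,\,k_2+1)$, which, because $(k_1+1)(k_2+1)\equiv 1$ modulo the order, equals $L(k_1k_2+k_1+k_2,\,k_1+1)$.

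I expect the main obstacle to be bookkeeping rather than conceptual: getting the second lens-space parameter $q$ correct (and recognising the allowed ambiguity $L(p,q)\cong L(p,q')$ when $q'\equiv\pm q^{\pm1}\ (\mathrm{mod}\ p)$), pinning down the orientation conventions for the meridian framings so that the answer is $L(k_i,1)$ rather than its mirror, and justifying the degenerate slam-dunk in which a $0$-framed meridian sends the framing of $U_0$ to $\infty$. The fundamental-group computation flagged in the remark above --- $\pi_1=\mathbb{Z}/k_1\ast\mathbb{Z}/k_2$ when $k_3=0$ and the cyclic group of order $k_1k_2+k_1+k_2$ when $k_3=1$ --- is a useful consistency check, but I would \emph{stress} that it cannot replace the Kirby-calculus argument, since $\pi_1$ does not distinguish lens spaces of equal order and so cannot by itself produce the parameter $q$.
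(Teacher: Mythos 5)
Your argument is correct: the two blow-downs and the framing/linking bookkeeping all check out (the $+1$-framed meridian takes $U_0$ to framing $-1$, blowing down $U_0$ yields the Hopf link with framings $k_i+1$ and linking $1$, and the slam-dunk gives $(k_1+1)-\tfrac{1}{k_2+1}=\tfrac{(k_1+1)(k_2+1)-1}{k_2+1}$, so $L(k_1k_2+k_1+k_2,\,k_2+1)\cong L(k_1k_2+k_1+k_2,\,k_1+1)$ by $q\mapsto q^{-1}$), and the degenerate slam-dunk you flag can, if one prefers, be replaced by sliding $U_1$ and $U_2$ over the $0$-framed meridian $U_3$ to unlink them from $U_0$ and then cancelling the resulting $(0,0)$-framed Hopf pair. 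The paper itself gives no proof of this proposition --- it is quoted from \cite{2016An} --- and the argument there is exactly this standard Kirby-calculus computation, so your proposal reconstructs the intended proof rather than deviating from it.
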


In this section, we focus on the situation when $k_1,k_2,k_3\geq 2$.

\begin{definition}
    Let $\mathcal{G}^{k_1,k_2,k_3}_{n_1,n_2,n_3}$ be the submodule of $S_{2,\infty}(S^2(k_1,k_2,k_3))$ generated by $\{s_1^{l_1}s_2^{l_2}s_3^{l_3}\}_{0\leq l_i\le n_i}$. $O(s_1^{k_1}s_2^{k_2}s_3^{k_3})$ represents some elements in $\mathcal{G}^{k_1,k_2,k_3}_{n_1,n_2,n_3}$.
\end{definition}

\begin{remark}
    $S_{2,\infty}(S^2(k_1,k_2,k_3))=\mathcal{G}^{k_1,k_2,k_3}_{\infty,\infty,\infty}$.
\end{remark}

\begin{lemma}
    $$\mathcal{G}^{k_1,k_2,k_3}_{k_1,k_2,\infty}=\mathcal{G}^{k_1,k_2,k_3}_{k_1,k_2,k_3},\quad k_i\ge 2, n_3\ge k_3.$$
    \begin{proof}
    $\forall s_1^{n_1}s_2^{n_2}s_3^{n_3+1}\in \mathcal{G}^{k_1,k_2,k_3}_{k_1,k_2,n_3+1}$, $k_1\ge n_1\ge 0$, $k_2\ge n_2\ge 0$, $n_3\ge k_3$,
        \begin{align}
            s_1^{n_1}s_2^{n_2}s_3^{n_3+1}\xlongequal{R_{23}^{n_1,n_2,n_3+1}}&-A^{4}s_1^{n_1}s_2^{n_2-2}s_3^{n_3-1}-A^2s_1^{n_1+1}s_2^{n_2-1}s_3^{n_3}-A^2s_1^{n_1-1}s_2^{n_2-1}s_3^{n_3}\nonumber\\&-A^{2n_2+2n_3-k_2-k_3+2}s_1^{n_1}s_2^{-n_2+k_2}s_3^{n_3-k_3-1}\nonumber\\&-A^{2n_2+2n_3-k_2-k_3+6}s_1^{n_1}s_2^{-n_2+k_2-2}s_3^{n_3-k_3+1}\nonumber\\&-A^{2n_2+2n_3-k_2-k_3+4}s_1^{n_1+1}s_2^{-n_2+k_2-1}s_3^{n_3-k_3}\nonumber\\&-A^{2n_2+2n_3-k_2-k_3+4}s_1^{n_1-1}s_2^{-n_2+k_2-1}s_3^{n_3-k_3}\nonumber\\=&-A^2s_1^{n_1+1}s_2^{n_2-1}s_3^{n_3}-A^{2n_2+2n_3-k_2-k_3+4}s_1^{n_1+1}s_2^{-n_2+k_2-1}s_3^{n_3-k_3}+O(s_1^{k_1}s_2^{k_2}s_3^{n_3}).\nonumber
        \end{align}
    For $s_1^{n_1+1}s_2^{n_2-1}s_3^{n_3}$,
        \begin{align}
            s_1^{n_1+1}s_2^{n_2-1}s_3^{n_3}\xlongequal{R_{13}^{n_1+1,n_2-1,n_3}}&-A^4s_1^{n_1-1}s_2^{n_2-1}s_3^{n_3-2}-A^2s_1^{n_1}s_2^{n_2}s_3^{n_3-1}-A^2s_1^{n_1}s_2^{n_2-2}s_3^{n_3-1}\nonumber\\&-A^{2n_1+2n_3-k_1-k_3+2}s_1^{-n_1+k_1-1}s_2^{n_2-1}s_3^{n_3-k_3-2}\nonumber\\&-A^{2n_1+2n_3-k_1-k_3+6}s_1^{-n_1+k_1-3}s_2^{n_2-1}s_3^{n_3-k_3}\nonumber\\&-A^{2n_1+2n_3-k_1-k_3+4}s_1^{-n_1+k_1-2}s_2^{n_2}s_3^{n_3-k_3-1}\nonumber\\&-A^{2n_1+2n_3-k_1-k_3+4}s_1^{-n_1+k_1-2}s_2^{n_2-2}s_3^{n_3-k_3-1}\nonumber\\=&O(s_1^{k_1}s_2^{k_2}s_3^{n_3-1}).\nonumber
        \end{align}
    For $s_1^{n_1+1}s_2^{-n_2+k_2-1}s_3^{n_3-k_3}$,
        \begin{align}
            s_1^{n_1+1}s_2^{-n_2+k_2-1}s_3^{n_3-k_3}\xlongequal{R_{13}^{n_1+1,-n_2+k_2-1,n_3-k_3}}&-A^4s_1^{n_1-1}s_2^{-n_2+k_2-1}s_3^{n_3-k_3-2}\nonumber\\&-A^2s_1^{n_1}s_2^{-n_2+k_2}s_3^{n_3-k_3-1}\nonumber\\&-A^2s_1^{n_1}s_2^{-n_2+k_2-2}s_3^{n_3-k_3-1}\nonumber\\&-A^{2n_1+2n_3-k_1-3k_3+2}s_1^{-n_1+k_1-1}s_2^{-n_2+k_2-1}s_3^{n_3-2k_3-2}\nonumber\\&-A^{2n_1+2n_3-k_1-3k_3+6}s_1^{-n_1+k_1-3}s_2^{-n_2+k_2-1}s_3^{n_3-2k_3}\nonumber\\&-A^{2n_1+2n_3-k_1-3k_3+4}s_1^{-n_1+k_1-2}s_2^{-n_2+k_2}s_3^{n_3-2k_3-1}\nonumber\\&-A^{2n_1+2n_3-k_1-3k_3+4}s_1^{-n_1+k_1-2}s_2^{-n_2+k_2-2}s_3^{n_3-2k_3-1}\nonumber\\=&O(s_1^{k_1}s_2^{k_2}s_3^{n_3-1}).\nonumber
        \end{align}
        Therefore, $s_1^{n_1}s_2^{n_2}s_3^{n_3+1}=O(s_1^{k_1}s_2^{k_2}s_3^{n_3})\in \mathcal{G}^{k_1,k_2,k_3}_{k_1,k_2,n_3}$. Repeat these proof steps, we have $s_1^{n_1}s_2^{n_2}s_3^{n_3}\in \mathcal{G}^{k_1,k_2,k_3}_{k_1,k_2,n_3-1}\subset \mathcal{G}^{k_1,k_2,k_3}_{k_1,k_2,n_3-2}\subset\cdots\subset\mathcal{G}^{k_1,k_2,k_3}_{k_1,k_2,k_3},\ (k_1\ge n_1\ge 0,\ k_2\ge n_2\ge 0\ ,n_3>k_3)$.
    \end{proof}
    \label{Lemma 4.5}
\end{lemma}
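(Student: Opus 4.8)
The plan is to establish the non-trivial inclusion $\mathcal{G}^{k_1,k_2,k_3}_{k_1,k_2,\infty}\subseteq\mathcal{G}^{k_1,k_2,k_3}_{k_1,k_2,k_3}$, the reverse inclusion being immediate since the right-hand module is generated by a subset of the generators of the left. I would prove it by peeling off the top power of $s_3$ one layer at a time, that is, by showing that $\mathcal{G}^{k_1,k_2,k_3}_{k_1,k_2,n_3+1}=\mathcal{G}^{k_1,k_2,k_3}_{k_1,k_2,n_3}$ for every $n_3\ge k_3$, and then iterating down to $s_3$-degree $k_3$. For the single-layer step it suffices to take an arbitrary generator $s_1^{n_1}s_2^{n_2}s_3^{n_3+1}$ with $0\le n_1\le k_1$ and $0\le n_2\le k_2$ and to show that it lies in $\mathcal{G}^{k_1,k_2,k_3}_{k_1,k_2,n_3}$.

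The engine of the reduction is that the relators $R_{13}^{\,\bullet}$ and $R_{23}^{\,\bullet}$ vanish in $S_{2,\infty}(S^2(k_1,k_2,k_3))$ by Theorem \ref{Theorem 4.1}. First I would apply the relation $R_{23}^{n_1,n_2,n_3+1}=0$, which in the quotient reads $R_{23}(n_1,n_2,n_3+1)=R_{23}(n_1,-n_2+k_2,-n_3+k_3-1)$; expanding both sides through the explicit formula for $R_{23}(\cdot)$ lets me solve for the top term $s_1^{n_1}s_2^{n_2}s_3^{n_3+1}$. After normalising negative Chebyshev indices via $s_i^{-1}=0$ and $s_i^{-m}=-s_i^{m-2}$, every resulting monomial has $s_3$-degree at most $n_3$ and $s_1,s_2$-degrees at most $k_1,k_2$ — with one exception: two terms of the shape $s_1^{n_1+1}s_2^{\bullet}s_3^{\bullet}$ (both of $s_3$-degree at most $n_3$) acquire an $s_1$-exponent $n_1+1$ that overshoots $k_1$ exactly when $n_1=k_1$.

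To absorb these exceptional terms I would apply $R_{13}=0$ to each. Running $R_{13}^{\,n_1+1,\,n_2-1,\,n_3}=0$, together with the analogous relation for the second exceptional term, rewrites each $s_1^{n_1+1}s_2^{\bullet}s_3^{\bullet}$ as a combination all of whose $s_1$-exponents are at most $k_1$ and whose $s_3$-exponents drop strictly below $n_3$; hence each exceptional term lies in $\mathcal{G}^{k_1,k_2,k_3}_{k_1,k_2,n_3-1}\subseteq\mathcal{G}^{k_1,k_2,k_3}_{k_1,k_2,n_3}$. Collecting everything shows $s_1^{n_1}s_2^{n_2}s_3^{n_3+1}\in\mathcal{G}^{k_1,k_2,k_3}_{k_1,k_2,n_3}$, which is the single-layer equality; iterating from arbitrarily large $s_3$-degree down to $k_3$ then yields the lemma.

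I expect the main obstacle to be the simultaneous control of two exponents. The $R_{23}$ step is what lowers the $s_3$-degree, but it necessarily overshoots in the $s_1$-direction, so the reduction cannot close using a single family of relators; the decisive point is that feeding the overshoot term into $R_{13}$ drags the $s_1$-degree back into $[0,k_1]$ while at the same time pushing the $s_3$-degree strictly below $n_3$. The bookkeeping that must be checked with care is that all the boundary monomials produced by the $k$-dependent halves of both relators (those carrying exponents such as $-n_2+k_2$ and $-n_3+k_3$) really do stay within the admissible ranges once the Chebyshev indices are collapsed; this is precisely where the hypothesis $n_3\ge k_3$, with $k_i\ge2$ as the standing assumption of the section, is used.
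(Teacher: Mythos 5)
Your proposal is correct and follows essentially the same route as the paper: apply $R_{23}^{n_1,n_2,n_3+1}=0$ to lower the $s_3$-degree, isolate the two overshoot terms with $s_1$-exponent $n_1+1$, absorb each via $R_{13}^{n_1+1,n_2-1,n_3}=0$ and $R_{13}^{n_1+1,-n_2+k_2-1,n_3-k_3}=0$, and iterate the layer-by-layer reduction from $\mathcal{G}^{k_1,k_2,k_3}_{k_1,k_2,n_3+1}$ down to $\mathcal{G}^{k_1,k_2,k_3}_{k_1,k_2,k_3}$. Your bookkeeping remarks (normalising $s_i^{-1}=0$, $s_i^{-m}=-s_i^{m-2}$, and checking the $k$-dependent halves stay in range) match the checks implicit in the paper's computation.
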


\begin{theorem}
    $S_{2,\infty}(S^2(k_1,k_2,k_3))=\mathcal{G}^{k_1,k_2,k_3}_{k_1,k_2,k_3}$, $k_i\ge 2$.
    \begin{proof}
        $$S_{2,\infty}(S^2(k_1,k_2,k_3))=\mathcal{G}^{k_1,k_2,k_3}_{\infty,\infty,\infty}\xlongequal{Theorem\ \ref{thm_D^2}}\mathcal{G}^{k_1,k_2,k_3}_{k_1,k_2,\infty}\xlongequal{Lemma\ \ref{Lemma 4.5}}\mathcal{G}^{k_1,k_2}_{k_1,k_2,k_3}.$$
    \end{proof}
\end{theorem}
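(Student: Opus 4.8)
The plan is to peel off the three Chebyshev indices in two stages, matching the displayed chain of equalities, and relying on the two prior results rather than redoing any handle-slide computation. Since $S_{2,\infty}(S^2(k_1,k_2,k_3))=\mathcal{G}^{k_1,k_2,k_3}_{\infty,\infty,\infty}$ by the Remark, the starting point is the presentation $S_{2,\infty}(S^2(k_1,k_2,k_3))=S_{2,\infty}(H_2)/(\mathcal{J}_{13}+\mathcal{J}_{23})$ of Theorem \ref{Theorem 4.1}. The key structural observation I would use first is the identity $\mathcal{J}_{13}+\mathcal{J}_{23}=\mathcal{J}_{12}+\mathcal{J}_{13}+\mathcal{J}_{23}$ from the Remark following Corollary \ref{D}, which yields the inclusion $\mathcal{J}_{12}\subseteq\mathcal{J}_{13}+\mathcal{J}_{23}$. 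Consequently $S_{2,\infty}(S^2(k_1,k_2,k_3))$ is a quotient of $S_{2,\infty}(D^2(k_1,k_2))=S_{2,\infty}(H_2)/\mathcal{J}_{12}$.

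For the first equality $\mathcal{G}^{k_1,k_2,k_3}_{\infty,\infty,\infty}=\mathcal{G}^{k_1,k_2,k_3}_{k_1,k_2,\infty}$, I would invoke Theorem \ref{thm_D^2}, which presents $S_{2,\infty}(D^2(k_1,k_2))$ as the free module on $G^{k_1,k_2}$. Every element of $G^{k_1,k_2}$ is a product $s_1^{n_1}s_2^{n_2}s_3^{n_3}$ satisfying $0\le n_1\le k_1$ and $0\le n_2\le k_2/2\le k_2$, with $n_3\ge 0$ unconstrained. Because $S_{2,\infty}(S^2(k_1,k_2,k_3))$ is a quotient of this free module, the images of $G^{k_1,k_2}$ already generate it; as each such generator lies in $\mathcal{G}^{k_1,k_2,k_3}_{k_1,k_2,\infty}$, this forces the first equality.

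The second equality $\mathcal{G}^{k_1,k_2,k_3}_{k_1,k_2,\infty}=\mathcal{G}^{k_1,k_2,k_3}_{k_1,k_2,k_3}$ is exactly Lemma \ref{Lemma 4.5}, which I would simply cite. Its content is a descending induction on the third index: given a generator $s_1^{n_1}s_2^{n_2}s_3^{n_3+1}$ with $0\le n_1\le k_1$, $0\le n_2\le k_2$ and $n_3\ge k_3$, one applies the relator $R_{23}^{n_1,n_2,n_3+1}$ to rewrite it, modulo relations, in terms of products with third index at most $n_3$, together with stray terms carrying first index $n_1\pm1$; these are reabsorbed by a further application of the $R_{13}$-relators, keeping the outcome inside $\mathcal{G}^{k_1,k_2,k_3}_{k_1,k_2,n_3}$. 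Iterating pushes $n_3$ down to $k_3$. Combining the two equalities gives the theorem, and since $\mathcal{G}^{k_1,k_2,k_3}_{k_1,k_2,k_3}$ is generated by the $(k_1+1)(k_2+1)(k_3+1)$ products $s_1^{l_1}s_2^{l_2}s_3^{l_3}$ with $0\le l_i\le k_i$, finite generation follows at once.

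The part I expect to be genuinely delicate is the bookkeeping hidden inside Lemma \ref{Lemma 4.5}, namely controlling the correction terms in the second stage. Each use of $R_{23}$ to lower the power of $s_3$ simultaneously raises the first index to $n_1+1$ and spawns mirror terms indexed by $-n_1+k_1$ and $-n_2+k_2$, any of which can momentarily leave the ranges $0\le n_1\le k_1$ and $0\le n_2\le k_2$. The crux is to show these excursions are harmless: the Chebyshev conventions $s_i^{-1}=0$ and $s_i^{-2}=-s_i^{0}$ fold out-of-range powers back into range, while a second pass through the $R_{13}$-relators cancels the leftover first-index growth, so that the net move strictly decreases $n_3$ and the induction terminates. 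This coordination between the two relator families $\mathcal{J}_{13}$ and $\mathcal{J}_{23}$ is the only nonformal ingredient, and it is precisely what Lemma \ref{Lemma 4.5} certifies.
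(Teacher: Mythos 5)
Your proposal is correct and takes essentially the same route as the paper: the paper's proof is exactly the two-step chain through $\mathcal{G}^{k_1,k_2,k_3}_{k_1,k_2,\infty}$, citing Theorem \ref{thm_D^2} for the first equality and Lemma \ref{Lemma 4.5} for the second. Your justification of the first step --- that the Remark after Corollary \ref{D} gives $\mathcal{J}_{12}\subseteq\mathcal{J}_{13}+\mathcal{J}_{23}$, so $S_{2,\infty}(S^2(k_1,k_2,k_3))$ is a quotient of $S_{2,\infty}(D^2(k_1,k_2))$ and hence generated by the images of $G^{k_1,k_2}$, whose first two exponents are bounded by $k_1$ and $k_2$ --- is precisely the argument the paper leaves implicit behind its citation of Theorem \ref{thm_D^2}.
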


\begin{corollary}
    $S_{2,\infty}(S^2(k_1,k_2,k_3)),k_i\ge 2$ is finite generated, minimal number of generators is less than $(k_1+1)(k_2+1)(k_3+1)$.
\end{corollary}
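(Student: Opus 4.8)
The plan is to read off both the finite generation and the coarse upper bound immediately from the preceding Theorem, and then to sharpen the bound to a strict inequality by exhibiting a single explicit relation among the proposed generators.

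First I would invoke the identity $S_{2,\infty}(S^2(k_1,k_2,k_3))=\mathcal{G}^{k_1,k_2,k_3}_{k_1,k_2,k_3}$. By definition, $\mathcal{G}^{k_1,k_2,k_3}_{k_1,k_2,k_3}$ is generated by the set $\{s_1^{l_1}s_2^{l_2}s_3^{l_3}\}_{0\le l_i\le k_i}$, which has exactly $(k_1+1)(k_2+1)(k_3+1)$ elements. This already shows that the module is finitely generated and that the minimal number of generators is \emph{at most} $(k_1+1)(k_2+1)(k_3+1)$.

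To obtain the strict inequality, I would produce one nontrivial linear relation among these generators in which some generator occurs with a unit coefficient, so that it may be deleted from the generating set via a Tietze move over $\mathbb{Z}[A^{\pm1}]$. A convenient choice is the relator $R_{13}^{0,0,0}=R_{13}(0,0,0)-R_{13}(k_1,0,k_3)$, which vanishes in the quotient by Theorem \ref{Theorem 4.1}. Expanding both terms through the earlier formula for $R_{13}(n_1,n_2,n_3)$ and using the Chebyshev conventions $s_i^{-1}=0$ and $s_i^{-2}=-s_i^{0}=-1$, the term $R_{13}(0,0,0)$ collapses to $-(A^2+A^{-2})s_1^0s_2^0s_3^0$, while every monomial produced by $R_{13}(k_1,0,k_3)$ has all three exponents inside the range $[0,k_i]$ precisely because $k_i\ge 2$. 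This yields the relation
\begin{align}
A^{-k_1-k_3-2}s_1^{k_1}s_2^0s_3^{k_3}+A^{-k_1-k_3+2}s_1^{k_1-2}s_2^0s_3^{k_3-2}+A^{-k_1-k_3}s_1^{k_1-1}s_2^{1}s_3^{k_3-1}=(A^2+A^{-2})s_1^0s_2^0s_3^0,\nonumber
\end{align}
all of whose monomials lie in the generating set.

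Since $s_1^{k_1}s_2^0s_3^{k_3}$ appears with the unit coefficient $A^{-k_1-k_3-2}$ and is distinct from every other monomial occurring in the relation, I can solve for it in terms of the remaining generators; hence it is redundant, the module is generated by at most $(k_1+1)(k_2+1)(k_3+1)-1$ elements, and the minimal number of generators is strictly less than $(k_1+1)(k_2+1)(k_3+1)$. The only delicate point, which is where the hypothesis $k_i\ge 2$ is genuinely used, is the bookkeeping step verifying that the expansion of $R_{13}(k_1,0,k_3)$ never leaves the generating set and that the eliminated generator $s_1^{k_1}s_2^0s_3^{k_3}$ does not coincide with any of the surviving monomials, so that the relation is truly nontrivial rather than a vacuous identity.
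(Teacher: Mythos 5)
Your proposal is correct, and it actually does more than the paper: the paper offers no proof of this corollary at all, treating it as immediate from the preceding theorem $S_{2,\infty}(S^2(k_1,k_2,k_3))=\mathcal{G}^{k_1,k_2,k_3}_{k_1,k_2,k_3}$, whose generating set $\{s_1^{l_1}s_2^{l_2}s_3^{l_3}\}_{0\le l_i\le k_i}$ has exactly $(k_1+1)(k_2+1)(k_3+1)$ elements — which, read literally, only yields ``at most'' rather than the stated strict inequality. Your elimination argument supplies exactly the missing strictness. I verified the computation: with the conventions $s_i^{-1}=0$ and $s_i^{-2}=-s_i^{0}$, the formula for $R_{13}(n_1,n_2,n_3)$ gives $R_{13}(0,0,0)=-(A^2+A^{-2})$ (consistent with $R(0,0)=\bigcirc$ in Lemma \ref{Lemma 3.2}), while $R_{13}(k_1,0,k_3)$ contributes the three monomials $s_1^{k_1}s_2^{0}s_3^{k_3}$, $s_1^{k_1-2}s_2^{0}s_3^{k_3-2}$, $s_1^{k_1-1}s_2^{1}s_3^{k_3-1}$ (the fourth term dies since $s_2^{-1}=0$), all with exponents in $[0,k_i]$ precisely because $k_i\ge 2$; and $R_{13}^{0,0,0}\in\mathcal{J}_{13}$ does vanish in the quotient by Theorem \ref{Theorem 4.1}, so your displayed relation holds. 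Since the exponent triple $(k_1,0,k_3)$ differs from $(k_1-2,0,k_3-2)$, $(k_1-1,1,k_3-1)$ and $(0,0,0)$ for all $k_i\ge 2$, and the coefficient $A^{-k_1-k_3-2}$ is a unit in $\mathbb{Z}[A^{\pm 1}]$, the generator $s_1^{k_1}s_2^{0}s_3^{k_3}$ can indeed be deleted, giving the strict bound. One small refinement to your last caveat: you do not need the eliminated monomial to be distinct from \emph{all} surviving monomials pairwise among themselves — when $k_1=k_3=2$ the surviving monomials $s_1^{k_1-2}s_2^{0}s_3^{k_3-2}$ and $s_1^{0}s_2^{0}s_3^{0}$ coincide, which is harmless (their coefficients merge); what matters is only that the eliminated generator occurs once, with unit coefficient, which your exponent bookkeeping already establishes.
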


Next, we show that the empty link is not trivial in $S_{2,\infty}(S^{2}(k_1,k_2,k_3))$.

\subsection{The empty link is not zero}
Let $A$ be a primitive $4r^{th}$ root, or $2r^{th}$ root when $r$ is odd, of unity in $\mathbb{C}$, with $r\ge 3$. $\Delta_n=\frac{(-1)^n(A^{2(n+1)}-A^{-2(n+1)})}{A^2-A^{-2}}$, $\omega=\sum_{n=0}^{r-2}\Delta_nS_n(\alpha)$. We have 

\begin{proposition}
    Where $\mu_n=(-1)^nA^{n^2+2n}$.
    \label{Proposition 2.7}
\end{proposition}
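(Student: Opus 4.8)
The plan is to prove the stated twist formula by induction on $n$: introducing a positive full twist into the framing of the Chebyshev-decorated curve $S_n(\alpha)$ multiplies it by the scalar $\mu_n=(-1)^nA^{n^2+2n}$. Throughout I would represent $S_n(\alpha)$ as the closure of the $n$-th Jones--Wenzl idempotent $f_n$ applied to $n$ parallel pushoffs of $\alpha$, which is the standard avatar of the Chebyshev decoration in the skein of the annulus; a $+1$ framing change on $\alpha$ then induces a full positive twist of this $n$-strand cable.

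For the base cases, when $n=0$ the decoration is empty and carries no framing, so the twist acts as the identity and $\mu_0=1=(-1)^0A^0$. When $n=1$ we have $S_1(\alpha)=\alpha$, a single strand, and resolving the positive curl by one application of the Kauffman bracket relation (a Reidemeister~I computation) produces the factor $-A^3$, which is exactly $\mu_1=(-1)^1A^{1+2}$.

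For the inductive step, the positive twist is an endomorphism $\theta_n$ of the $n$-strand object that commutes with $f_n$; since $f_n$ is a primitive idempotent it is an eigenvector, $\theta_n f_n=\mu_n f_n$, so the problem reduces to pinning down the recursion satisfied by the eigenvalues. I would decompose the full positive twist on $n+1$ strands as the full twist on the first $n$ strands followed by the last strand winding once around the $f_n$-colored bundle, and then feed in the Wenzl recursion expressing $f_{n+1}$ through $f_n$ and a turnback, whose coefficient is a ratio of consecutive $\Delta_n$. Evaluating the winding contribution with the elementary crossing relations yields the one-step recursion $\mu_{n+1}=-A^{2n+3}\mu_n$, and telescoping against $\mu_0=1$ gives the closed form $\mu_n=(-1)^nA^{n^2+2n}$.

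The main obstacle is the framing bookkeeping in this inductive step: one must track precisely the powers of $A$ picked up as the extra strand encircles the bundle and reconcile them with the coefficient coming from the Wenzl recursion, since a sign or exponent slip there would corrupt the final formula. Once those powers are accounted for, everything collapses to the clean ratio $\mu_{n+1}/\mu_n=-A^{2n+3}$. An elementary alternative would compute the twist of the cable directly from the recurrence $S_{n+1}(\alpha)=\alpha S_n(\alpha)-S_{n-1}(\alpha)$, but twisting the product $\alpha S_n(\alpha)$ entangles the strands and leads back to essentially the same computation.
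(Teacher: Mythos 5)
The paper never proves this proposition: it is the classical twist-eigenvalue formula for a strand colored by the $n$-th Jones--Wenzl idempotent, and the authors simply refer to \cite{0An} for the details. Your reconstruction is the standard argument from that literature, and it is correct in substance. The framework is right: the image of the full twist in $TL_{n+1}$ is central, and since $f_{n+1}\,x\,f_{n+1}$ is always a scalar multiple of $f_{n+1}$ for $x\in TL_{n+1}$, the twist acts on $f_{n+1}$ by a scalar $\mu_{n+1}$; the base cases $\mu_0=1$ and $\mu_1=-A^3$ (one Reidemeister~I resolution) are as you say. The one place to be more precise is your decomposition of the inductive step: the full twist on $n+1$ strands is not merely ``full twist on the first $n$ strands followed by the last strand winding once around the bundle'' --- the correct ribbon identity is $\theta_{n+1}=(\theta_n\otimes\theta_1)\circ c^2$, so the curl on the last strand must be included and contributes the factor $\mu_1=-A^3$, while the double winding $c^2$ contributes $A^{2n}$. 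That latter coefficient has a clean justification you should spell out: expanding each of the $2n$ crossings of $c^2$ via $\sigma_i=A+A^{-1}e_i$, every term involving at least one $e_i$ is a non-identity Temperley--Lieb diagram (a nonempty product of $e_i$'s has fewer than $n+1$ through-strands), hence is killed when sandwiched between copies of $f_{n+1}$; the identity diagram survives with coefficient $A^{2n}$. Together these give $\mu_{n+1}=A^{2n}(-A^3)\mu_n=-A^{2n+3}\mu_n$, exactly your recursion, which telescopes to $(-1)^nA^{n^2+2n}$. A quick sanity check at $n=1$: $\sigma_1^2=A^2\cdot\mathrm{id}+(1-A^{-4})e_1$ and $f_2e_1=0$, so the winding eigenvalue is $A^2$, consistent with $\mu_2/\mu_1^2=A^8/A^6=A^2$. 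With that bookkeeping made explicit your proof is complete; the Wenzl-recursion route you mention as an alternative is essentially the derivation carried out in the reference the paper cites, so either path lands in the same place.
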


\begin{figure}[H]
        \centering
        \includegraphics[width=0.3\linewidth]{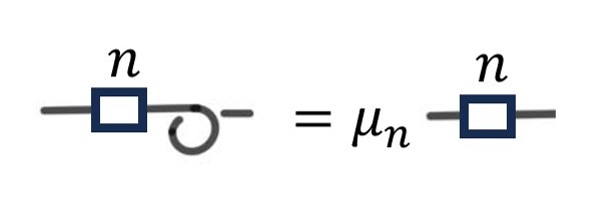}
\end{figure}

\begin{proposition}
    Where $f_n^a=(-1)^a\frac{A^{2(n+1)(a+1)}-A^{-2(n+1)(a+1)}}{A^{2(n+1)}-A^{-2(n+1)}}$.
    \label{Proposition 2.8}
\end{proposition}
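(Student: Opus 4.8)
The plan is to read the statement of Proposition~\ref{Proposition 2.8} (encoded in its accompanying figure) as the \emph{encircling formula}: if a meridian carrying the decoration $S_a$ links once the core strand carrying $S_n(\alpha)$, then the meridian can be absorbed at the cost of the scalar $f_n^a$; equivalently the Hopf link coloured by $S_n$ and $S_a$ evaluates to $\Delta_n f_n^a$. I would establish this by first handling a single undecorated meridian and then bootstrapping to arbitrary $a$ through the Chebyshev recursion, so that the whole proposition reduces to one eigenvalue computation plus a polynomial identity.

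First I would compute the scalar produced by a single meridian $\beta=S_1(\beta)$. Let $E$ denote the $\mathbb{Z}[A^{\pm}]$-linear operator on the skein module of the solid torus neighbourhood of $\alpha$ given by wrapping one meridian around the core. A direct resolution of the two clasped crossings between $\beta$ and the $S_n$-coloured core by the Kauffman relation, combined with the twist coefficient $\mu_n$ of Proposition~\ref{Proposition 2.7}, shows that the basis curves $S_n(\alpha)$ are eigenvectors of $E$:
\[
E\bigl(S_n(\alpha)\bigr)=\lambda_n\,S_n(\alpha),\qquad \lambda_n=-\bigl(A^{2(n+1)}+A^{-2(n+1)}\bigr).
\]
As a sanity check, for $n=0$ this gives $\lambda_0=-A^{2}-A^{-2}$, the value of the trivial unknot $\bigcirc$ recorded in Lemma~\ref{Lemma 3.2}.

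Next I would upgrade one meridian to the decoration $S_a(\beta)$. Nesting $k$ parallel meridians around the core composes $E$ with itself $k$ times, so the assignment sending a meridian-skein to its induced operator is a ring homomorphism under which $\beta^{k}\mapsto E^{k}$ and hence $S_a(\beta)=S_a\bigl(S_1(\beta)\bigr)\mapsto S_a(E)$. Evaluating on the eigenvector $S_n(\alpha)$ yields the eigenvalue $S_a(\lambda_n)$, so $f_n^a=S_a(\lambda_n)$. It then remains to match this with the closed form in the statement: using the identity $S_a(t+t^{-1})=\dfrac{t^{\,a+1}-t^{-(a+1)}}{t-t^{-1}}$ with $t=-A^{2(n+1)}$ (so that $t+t^{-1}=\lambda_n$ and $t^{\pm(a+1)}=(-1)^{a+1}A^{\pm 2(n+1)(a+1)}$) gives
\[
S_a(\lambda_n)=(-1)^a\,\frac{A^{2(n+1)(a+1)}-A^{-2(n+1)(a+1)}}{A^{2(n+1)}-A^{-2(n+1)}}=f_n^a,
\]
which is exactly the asserted value.

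I expect the main obstacle to be the eigenvalue computation of the second paragraph: one must untangle the clasp of the meridian with the $S_n$-decorated core carefully, appealing to Proposition~\ref{Proposition 2.7} to convert the local twist into the factor $\mu_n$, and one must justify that the $S_n(\alpha)$ are genuine simultaneous eigenvectors of $E$ rather than merely spanning an $E$-invariant subspace. Once $\lambda_n$ is pinned down, the passage to general $a$ is the formal functional-calculus step above and the concluding identity is a routine Chebyshev manipulation.
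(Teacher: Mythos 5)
Your proposal is correct, but note that the paper itself offers no proof to compare against: Propositions \ref{Proposition 2.7} and \ref{Proposition 2.8} are quoted as standard facts with the single line ``For details, we refer to \cite{0An}.'' What you have written is essentially the standard argument from that reference, and it holds up. The key eigenvalue lemma is right as stated: $S_n(\alpha)$ is an eigenvector of the meridian map $E$ with $\lambda_n=-\bigl(A^{2(n+1)}+A^{-2(n+1)}\bigr)$, proved by induction on $n$ using the recursion $S_{n+1}(\alpha)=\alpha S_n(\alpha)-S_{n-1}(\alpha)$ and resolving the meridian's two crossings with one of the parallel strands; this induction is exactly the ``main obstacle'' you flag, and it is the content of the standard lemma in \cite{0An}, so your outline is complete once it is written out. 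The functional-calculus step ($k$ nested meridians give $E^k$, hence $S_a(\beta)\mapsto S_a(E)$ and $f_n^a=S_a(\lambda_n)$) and the concluding substitution $t=-A^{2(n+1)}$ in $S_a(t+t^{-1})=\frac{t^{a+1}-t^{-(a+1)}}{t-t^{-1}}$ are correct, including the sign bookkeeping: $(-1)^{a+1}$ in the numerator against $-1$ in the denominator yields the stated $(-1)^a$, and the sanity checks $f_n^1=\lambda_n$ and $\lambda_0=-A^2-A^{-2}$ both pass. One small inaccuracy: the twist coefficient $\mu_n$ of Proposition \ref{Proposition 2.7} is not actually needed for your eigenvalue computation --- the meridian is $0$-framed, and the direct Kauffman resolution only produces curls with the local coefficients $-A^{\pm3}$; $\mu_n$ would enter only if you instead evaluated the coloured Hopf link by the ribbon/twist route (expressing the clasp through framing changes on the components of $S_n\otimes S_a$). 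This does not affect correctness, since either derivation gives the same $\lambda_n$, but the clean version of your second paragraph should drop the appeal to Proposition \ref{Proposition 2.7}.
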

\begin{figure}[H]
        \centering
        \includegraphics[width=0.3\linewidth]{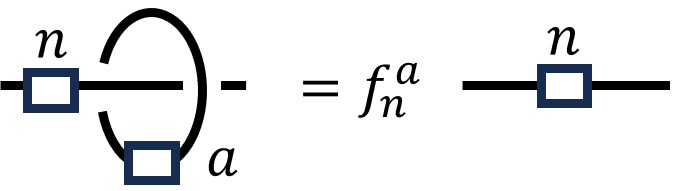}
\end{figure}
For details, we refer to \cite{0An}.
We define the following homomorphism.
\begin{definition}
    Let $\eta_A:S_{2,\infty}(S^2(k_1,k_2,k_3))\to \mathbb{C}$ be a homomorphism with each link component (including surgery link) assigned a $\omega$,  where $A=e^{\frac{\pi i}{3}}$, $r=3$, $r$ is odd (for example, the image of $a_1a_2a_3$ are as follow). According to \cite{0An}, this is a homomorphism.
     \begin{figure}[H]
        \centering
        \includegraphics[width=0.5\linewidth]{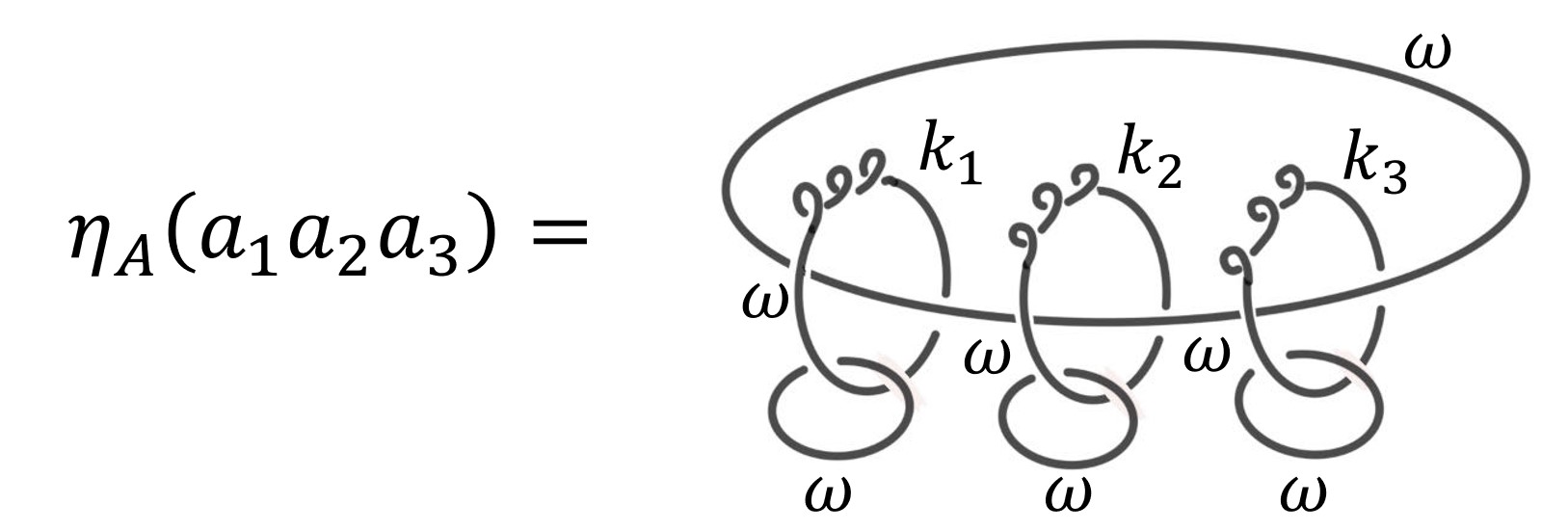}
        \caption{Example of $\eta_A(a_1a_2a_3)$}
        \label{_4_7_Def_of_eta}
    \end{figure}
\end{definition}

\begin{proposition}
    $\phi\ne 0$ in $S_{2,\infty}(S^2(k_1,k_2,k_3))$.
    \begin{proof}
        $\eta_A(\phi)=\sum_{i_0,i_1,i_2,i_3=0}^{1}\Delta_{i_0}^2\Delta_{i_1}\Delta_{i_2}\Delta_{i_3}\mu_{i_1}^{-k_1}\mu_{i_2}^{-k_2}\mu_{i_3}^{-k_3}f_{i_0}^{i_1}f_{i_0}^{i_2}f_{i_0}^{i_3}$, where $f_{0}^{0}=f_{1}^{0}=f_{0}^{1}=f_{1}^{1}=1$, $\mu_{0}=\mu_{1}=1$, $\Delta_{0}=1$, $\Delta_{1}=-1$. Therefore, $\eta_A(\phi)=\sum_{i_0,i_1,i_2,i_3=0}^{1}1=16\ne 0$.
    \end{proof}
\end{proposition}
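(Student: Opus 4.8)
The plan is to invoke the standard nontriviality criterion for skein modules: if there is a $\mathbb{Z}[A^{\pm 1}]$-module homomorphism (after base change to $\mathbb{C}$) $\eta_A\colon S_{2,\infty}(S^2(k_1,k_2,k_3))\to\mathbb{C}$ with $\eta_A(\phi)\neq 0$, then $\phi\neq 0$, since any relation in the module would force its image to vanish. So it suffices to exhibit one such homomorphism and evaluate it on the empty link. I would take $\eta_A$ to be the Reshetikhin--Turaev type evaluation in which every component of the surgery link is decorated by the Kirby color $\omega=\sum_{n=0}^{r-2}\Delta_n S_n(\alpha)$, specialized at $A=e^{\pi i/3}$ with $r=3$ (odd), as in the Definition preceding the statement.

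The first step is to justify that $\eta_A$ is well defined. Since $S^2(k_1,k_2,k_3)$ is obtained by Dehn surgery on a link $L\subset S^3$, a class in its skein module is represented by a link in $S^3\setminus L$, and $\eta_A$ sends it to the Kauffman bracket of that link together with every component of $L$ colored by $\omega$. Compatibility with the Kauffman relations is automatic; the only point requiring the special structure of $\omega$ is invariance under the handle slides that present $S_{2,\infty}(S^2(k_1,k_2,k_3))$ as a quotient of the complement. This is precisely the slide-invariance of $\omega$, together with the framing-twist and encircling formulas recorded in Proposition \ref{Proposition 2.7} (giving the twist factor $\mu_n=(-1)^nA^{n^2+2n}$) and Proposition \ref{Proposition 2.8} (giving the pairing factor $f_n^a$); see \cite{0An}.

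Next I would read the surgery data off the right-hand model in Figure \ref{Two models of $S^2(k_1,k_2,k_3)$}: a central $0$-framed unknot $U_0$ that is clasped once by three further unknots carrying framings $k_1,k_2,k_3$. To compute $\eta_A(\phi)$ I expand the color $\omega$ on each of the four components into its Chebyshev summands $S_{i_j}(\alpha)$ with $i_j\in\{0,\dots,r-2\}=\{0,1\}$. Expanding $\omega$ contributes the weights $\Delta_{i_0}\Delta_{i_1}\Delta_{i_2}\Delta_{i_3}$; evaluating the resulting colored diagram contributes one further $\Delta_{i_0}$ from the central loop, a Hopf-pairing factor $f_{i_0}^{i_j}$ from each clasp of $U_0$ with a leg, and a twist $\mu_{i_j}^{-k_j}$ from each framed leg (the central twist being trivial). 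Assembling these gives
\[
\eta_A(\phi)=\sum_{i_0,i_1,i_2,i_3=0}^{1}\Delta_{i_0}^2\Delta_{i_1}\Delta_{i_2}\Delta_{i_3}\,\mu_{i_1}^{-k_1}\mu_{i_2}^{-k_2}\mu_{i_3}^{-k_3}\,f_{i_0}^{i_1}f_{i_0}^{i_2}f_{i_0}^{i_3}.
\]

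Finally I would specialize to $A=e^{\pi i/3}$, $r=3$. A direct evaluation of the quantum integers at this root gives $\Delta_0=\Delta_1=1$, $\mu_0=\mu_1=1$, and $f_i^{\,j}=1$ for all $i,j\in\{0,1\}$, so each of the $2^4=16$ summands equals $1$ and $\eta_A(\phi)=16\neq 0$; in particular the answer is independent of $k_1,k_2,k_3$, and $\phi\neq 0$ follows. I expect the main obstacle to be the middle step: correctly matching the geometric surgery presentation (which component links which, and with what framing) to the algebraic factors $\mu$ and $f$, and verifying that the handle-slide invariance of $\omega$ genuinely makes $\eta_A$ descend to the quotient module. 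Once the diagram and the slide-invariance from \cite{0An} are in hand, the final numerical evaluation is immediate.
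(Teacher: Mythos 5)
Your proposal is correct and takes essentially the same route as the paper: the same homomorphism $\eta_A$ built by coloring every surgery component with the Kirby color $\omega$ at $A=e^{\pi i/3}$, $r=3$, the same expansion into Chebyshev summands producing the factors $\Delta_{i_0}^2\Delta_{i_1}\Delta_{i_2}\Delta_{i_3}\mu_{i_1}^{-k_1}\mu_{i_2}^{-k_2}\mu_{i_3}^{-k_3}f_{i_0}^{i_1}f_{i_0}^{i_2}f_{i_0}^{i_3}$, and the same final count of $16\neq 0$. One small point in your favor: your value $\Delta_1=1$ is the correct evaluation at this root (indeed $\Delta_1=(-1)(A^4-A^{-4})/(A^2-A^{-2})=1$), whereas the paper's stated $\Delta_1=-1$ is a typo inconsistent with its own conclusion that every summand equals $1$, since with $\Delta_1=-1$ the sum would factor as $\bigl(\sum_{i_0}\Delta_{i_0}^2\bigr)\bigl(\sum_{i}\Delta_{i}\bigr)^3=0$.
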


\subsection*{Acknowledgements}

The authors would like to thank Yanqing Zou for helpful discussions. The second author was partially supported by the National Natural Science Foundation of China (Grant NO. 12131009, 12471065). The third author was partially supported by the National Natural Science Foundation of China (Grant NO. 11901229, 12371029, 22341304 and W2412041).

\section{Appendix}

\subsection{Proof of Lemma \ref{Lemma formula 0}}
\begin{proof}
        \begin{align}
            &R_{12}(-1,k_2+1,n_3)+A^2R_{12}(0,k_2,n_3-1)+A^2R_{12}(0,k_2,n_3+1)+A^4R_{12}(1,k_2-1,n_3)\nonumber\\=&-A^{-k_2-2}s_1^{-1}s_2^{k_2+1}s_3^{n_3}-A^{-k_2+2}s_1^{-3}s_2^{k_2-1}s_3^{n_3}-A^{-k_2}s_1^{-2}s_2^{k_2}s_3^{n_3+1}-A^{-k_2}s_1^{-2}s_2^{k_2}s_3^{n_3-1}\nonumber\\&-A^{-k_2}s_1^{0}s_2^{k_2}s_3^{n_3-1}-A^{-k_2+4}s_1^{-2}s_2^{k_2-2}s_3^{n_3-1}-A^{-k_2+2}s_1^{-1}s_2^{k_2-1}s_3^{n_3}-A^{-k_2+2}s_1^{-1}s_2^{k_2-1}s_3^{n_3-2}\nonumber\\&-A^{-k_2}s_1^{0}s_2^{k_2}s_3^{n_3+1}-A^{-k_2+4}s_1^{-2}s_2^{k_2-2}s_3^{n_3+1}-A^{-k_2+2}s_1^{-1}s_2^{k_2-1}s_3^{n_3}-A^{-k_2+2}s_1^{-1}s_2^{k_2-1}s_3^{n_3}\nonumber\\&-A^{-k_2+2}s_1^{1}s_2^{k_2-1}s_3^{n_3}-A^{-k_2+6}s_1^{-1}s_2^{k_2-3}s_3^{n_3}-A^{-k_2+4}s_1^{0}s_2^{k_2-2}s_3^{n_3+1}-A^{-k_2+4}s_1^{0}s_2^{k_2-2}s_3^{n_3-1}\nonumber\\=&0.\nonumber
        \end{align}
        Similarly,
        \begin{align}
            &R_{12}(k_1+1,-1,n_3)+A^2R_{12}(k_1,0,n_3-1)+A^2R_{12}(k_1,0,n_3+1)+A^4R_{12}(k_1-1,1,n_3)=0.\nonumber
        \end{align}
        Therefore,
        \begin{align}
        &R_{12}^{-1,k_2+1,n_3}+A^{2}R_{12}^{0,k_2,n_3-1}+A^2R_{12}^{0,k_2,n_3+1}+A^{4}R_{12}^{1,k_2-1,n_3}\nonumber\\=&R_{12}(-1,k_2+1,n_3)+A^2R_{12}(0,k_2,n_3-1)+A^2R_{12}(0,k_2,n_3+1)+A^4R_{12}(1,k_2-1,n_3)\nonumber\\&-
        \left(R_{12}(k_1+1,-1,n_3)+A^2R_{12}(k_1,0,n_3-1)+A^2R_{12}(k_1,0,n_3+1)+A^4R_{12}(k_1-1,1,n_3)\right)\nonumber\\=&0.\nonumber
        \end{align}
    \end{proof}
\subsection{Proof of Lemma \ref{Lemma formula 01}}
\begin{proof}

\begin{definition}
\begin{align}
    F_{u}^{n_1,n_2,n_3}=-A^{-n_1-n_2-2}\sum_{j=0}^{u}s_1^{n_1}s_2^{n_2}s_3^{n_3+2j}-A^{-n_1-n_2}\sum_{j=0}^{u-1}s_1^{n_1-1}s_2^{n_2-1}s_3^{n_3+1+2j},\nonumber\\\tilde{F}_{u}^{n_1,n_2,n_3}=-A^{-n_1-n_2-2}\sum_{j=0}^{u}s_1^{n_1}s_2^{n_2}s_3^{n_3+2j}-A^{-n_1-n_2}\sum_{j=0}^{u+1}s_1^{n_1-1}s_2^{n_2-1}s_3^{n_3-1+2j}.\nonumber
\end{align}

\end{definition}

\begin{lemma}
\label{Lemma F+F}
    \begin{align}
    &F_{n_1+1}^{-1,n_2,n_3}+A^2F_{n_1}^{0,n_2-1,n_3+1}=0,\nonumber\\
    &\tilde{F}_{n_1-1}^{-1,n_2,n_3}+A^2\tilde{F}_{n_1}^{0,n_2-1,n_3-1}=0.\nonumber
    \end{align}
    \begin{proof}
        Direct calculation.
    \end{proof}
\end{lemma}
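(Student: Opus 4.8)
The plan is to verify both identities by direct substitution into the definitions of $F_u^{n_1,n_2,n_3}$ and $\tilde F_u^{n_1,n_2,n_3}$, exploiting the Chebyshev sign conventions in the first index. The crucial observation is that whenever the first superscript is specialized to $-1$ or $0$, the factor $s_1^{(\cdot)}$ collapses: since the negative-index convention $S_n=-S_{-n-2}$ forces $S_{-1}(x)=0$ and $S_{-2}(x)=-S_0(x)=-1$, we have $s_1^{-1}=0$, $s_1^{-2}=-1$, while of course $s_1^{0}=1$. So in each of the four terms appearing in the lemma, exactly one of the two sums survives and the other is annihilated, and the whole computation reduces to comparing two single surviving families.

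Concretely, for the first identity I would expand $F_{n_1+1}^{-1,n_2,n_3}$: its first sum carries the vanishing factor $s_1^{-1}=0$, while its second sum carries $s_1^{-2}=-1$, so after the substitutions $n_1\mapsto-1$, $u\mapsto n_1+1$ it reduces to
\[
F_{n_1+1}^{-1,n_2,n_3}=A^{1-n_2}\sum_{j=0}^{n_1}s_2^{n_2-1}s_3^{n_3+1+2j}.
\]
Expanding $A^2F_{n_1}^{0,n_2-1,n_3+1}$ the same way, here the first sum survives (because $s_1^{0}=1$) and the second vanishes (because $s_1^{-1}=0$), yielding exactly $-A^{1-n_2}\sum_{j=0}^{n_1}s_2^{n_2-1}s_3^{n_3+1+2j}$; the two cancel termwise. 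The second identity follows by the identical mechanism: substituting $n_1\mapsto-1$, $u\mapsto n_1-1$ into $\tilde F$ again kills the $s_1^{-1}$ sum and converts the $s_1^{-2}$ sum into $A^{1-n_2}\sum_{j=0}^{n_1}s_2^{n_2-1}s_3^{n_3-1+2j}$, whereas $A^2\tilde F_{n_1}^{0,n_2-1,n_3-1}$ contributes the negative of the same expression, so the total is zero.

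The only point requiring genuine care — and the reason this is not quite automatic — is matching the summation ranges and the $s_3$-exponents after the index shifts. The asymmetry between the upper limits $u$ and $u-1$ in $F_u$ (respectively $u$ and $u+1$ in $\tilde F_u$) is precisely calibrated so that the surviving sum coming from the $u=-1$ specialization runs over the same range $0\le j\le n_1$ as the surviving sum coming from the $u=n_1$ specialization, with matching exponents $n_3+1+2j$ (resp. $n_3-1+2j$). Verifying this range- and exponent-alignment, together with tracking the $A$-powers through the shift $n_2\mapsto n_2-1$ and the multiplication by $A^2$, is the main obstacle; once it is checked, the cancellation is immediate, which is why the statement is justified as a direct calculation.
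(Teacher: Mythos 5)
Your proposal is correct and is precisely the ``direct calculation'' the paper invokes without writing out: substituting the first superscript $-1$ (resp.\ $0$) into the definitions of $F_u$ and $\tilde F_u$, using $s_1^{-1}=0$ and $s_1^{-2}=-s_1^0=-1$, annihilates one sum in each term, and the surviving sums indeed align in range ($0\le j\le n_1$), in $s_3$-exponent ($n_3+1+2j$, resp.\ $n_3-1+2j$), and in $A$-power ($A^{1-n_2}$ after the factor $A^2$), cancelling exactly as you state. I verified each of your four reductions against the definitions and they are all correct, so nothing further is needed.
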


\begin{lemma}
\label{Lemma 4+F}
    \begin{align}
        A^{2n_1}\tilde{F}_{n_1}^{k,n_1-n_2-1,n_3}+A^{2n_1+2}\tilde{F}_{n_1-1}^{k-1,n_1-n_2,n_3+1}+A^{2n_2+2}F_{n_1+1}^{k-1,n_2-n_1,n_3-1}+A^{2n_2}F_{n_1}^{k,n_2-n_1-1,n_3}=0.\nonumber
    \end{align}
    \begin{proof}
        \begin{align}
            Left=&-A^{-k+n_1+n_2-1}\sum_{j=0}^{n_1}s_1^{k}s_2^{n_1-n_2-1}s_3^{n_3+2j}-A^{-k+n_1+n_2+1}\sum_{j=0}^{n_1+1}s_1^{k-1}s_2^{n_1-n_2-2}s_3^{n_3-1+2j}\nonumber\\&-A^{-k+n_1+n_2+1}\sum_{j=0}^{n_1-1}s_1^{k-1}s_2^{n_1-n_2}s_3^{n_3+1+2j}-A^{-k+n_1+n_2+3}\sum_{j=0}^{n_1}s_1^{k-2}s_2^{n_1-n_2-1}s_3^{n_3+2j}\nonumber\\&-A^{-k+n_1+n_2+1}\sum_{j=0}^{n_1+1}s_1^{k-1}s_2^{n_2-n_1}s_3^{n_3-1+2j}-A^{-k+n_1+n_2+3}\sum_{j=0}^{n_1}s_1^{k-2}s_2^{n_2-n_1-1}s_3^{n_3+2j}\nonumber\\&-A^{-k+n_1+n_2-1}\sum_{j=0}^{n_1}s_1^{k}s_2^{n_2-n_1-1}s_3^{n_3+2j}-A^{-k+n_1+n_2+1}\sum_{j=0}^{n_1-1}s_1^{k-1}s_2^{n_2-n_1-2}s_3^{n_3+1+2j}\nonumber\\=&0.\nonumber
        \end{align}
    \end{proof}
\end{lemma}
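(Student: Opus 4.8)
The plan is to prove this identity by a direct expansion that reduces to a pairwise cancellation governed by the parity of the Chebyshev polynomials. First I would substitute the definitions of $F_u^{a,b,c}$ and $\tilde F_u^{a,b,c}$ into each of the four summands on the left-hand side. Each $F$ or $\tilde F$ contributes exactly two sums, one running over monomials of the form $s_1^{a}s_2^{b}s_3^{c+2j}$ and one over $s_1^{a-1}s_2^{b-1}s_3^{c'+2j}$, so after expansion the left-hand side becomes a combination of eight single-parameter sums.

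The second step is to read off the scalar $A$-power prefactor attached to each of the eight sums. Absorbing $A^{2n_1}$, $A^{2n_1+2}$, $A^{2n_2+2}$, $A^{2n_2}$ into the prefactors $-A^{-a-b-2}$ and $-A^{-a-b}$ coming from the definitions, a short computation shows that every prefactor collapses to one of exactly three values: $-A^{-k+n_1+n_2-1}$, $-A^{-k+n_1+n_2+1}$, and $-A^{-k+n_1+n_2+3}$. Grouping the eight sums accordingly, two of them carry the power $-1$, four carry the power $+1$, and two carry the power $+3$. Crucially, within each group the $s_1$-exponent is constant (equal to $k$, $k-1$, and $k-2$ respectively), so only the $s_2$- and $s_3$-factors need to be compared.

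The decisive ingredient is the Chebyshev parity relation $S_{-m-2}(x)=-S_m(x)$ built into the definition of $S_q$, i.e.\ $s_2^{-m-2}=-s_2^{m}$. In the power-$(-1)$ group the two sums (from the first sum of $A^{2n_1}\tilde F_{n_1}^{k,n_1-n_2-1,n_3}$ and the first sum of $A^{2n_2}F_{n_1}^{k,n_2-n_1-1,n_3}$) agree in everything except that $s_2^{n_1-n_2-1}$ is replaced by $s_2^{n_2-n_1-1}$; since $n_2-n_1-1=-(n_1-n_2-1)-2$, the parity relation gives $s_2^{n_2-n_1-1}=-s_2^{n_1-n_2-1}$ and the two sums cancel. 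The power-$(+3)$ group cancels identically by the same index swap, now with $s_1^{k-2}$. In the power-$(+1)$ group I would pair the four sums by their summation bounds and $s_3$-progressions: the two sums with upper limit $n_1+1$ and factor $s_3^{n_3-1+2j}$ cancel via $s_2^{n_2-n_1}=-s_2^{n_1-n_2-2}$, and the two with upper limit $n_1-1$ and factor $s_3^{n_3+1+2j}$ cancel via $s_2^{n_2-n_1-2}=-s_2^{n_1-n_2}$. Since the summation ranges and $s_3$-exponents agree exactly within each pair, all eight sums cancel and the left-hand side equals $0$.

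The main obstacle is purely bookkeeping: one must track the shifts in the three indices carefully enough that the paired sums share identical summation bounds and identical $s_3$-exponent progressions, because the parity cancellation only closes when these match term by term. Once the eight sums are written out with their bounds made explicit, the partition into the three prefactor groups and the four internal pairings are forced, and the relation $S_{-m-2}=-S_m$ finishes each pair.
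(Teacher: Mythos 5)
Your proposal is correct and takes essentially the same route as the paper's proof: you expand the four $F$/$\tilde F$ terms into the same eight sums with prefactors $-A^{-k+n_1+n_2-1}$, $-A^{-k+n_1+n_2+1}$, $-A^{-k+n_1+n_2+3}$ (with constant $s_1$-exponents $k$, $k-1$, $k-2$ in each group) and cancel them in pairs. If anything, you supply the step the paper leaves implicit, namely that each pairing cancels via the convention $S_{-m-2}(x)=-S_m(x)$ applied to the $s_2$-exponents, with matching summation bounds and $s_3$-progressions.
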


\begin{lemma}
\label{Lemma R=F+F=F+F}
    \begin{align}
        \sum_{j=0}^{u}R_{12}(n_1,n_2,n_3+2j)&=F_{u}^{n_1,n_2,n_3}+F_{u+1}^{n_1-1,n_2-1,n_3-1}\nonumber\\&=\tilde{F}_{u}^{n_1,n_2,n_3}+\tilde{F}_{u-1}^{n_1-1,n_2-1,n_3+1},\ u\ge 0.\nonumber
    \end{align}
    \begin{proof}
        \begin{align}
            \sum_{j=0}^{u}R_{12}(n_1,n_2,n_3+2j)=&-A^{-n_1-n_2-2}\sum_{j=0}^{u}s_1^{n_1}s_2^{n_2}s_3^{n_3+2j}-A^{-n_1-n_2+2}\sum_{j=0}^{u}s_1^{n_1-2}s_2^{n_2-2}s_3^{n_3+2j}\nonumber\\&-A^{-n_1-n_2}\sum_{j=0}^{u}s_1^{n_1-1}s_2^{n_2-1}s_3^{n_3+1+2j}-A^{-n_1-n_2}\sum_{j=0}^{u}s_1^{n_1-1}s_2^{n_2-1}s_3^{n_3-1+2j}\nonumber\\=&-A^{-n_1-n_2-2}\sum_{j=0}^{u}s_1^{n_1}s_2^{n_2}s_3^{n_3+2j}-A^{-n_1-n_2+2}\sum_{j=0}^{u}s_1^{n_1-2}s_2^{n_2-2}s_3^{n_3+2j}\nonumber\\&-A^{-n_1-n_2}\sum_{j=0}^{u-1}s_1^{n_1-1}s_2^{n_2-1}s_3^{n_3+1+2j}-A^{-n_1-n_2}\sum_{j=0}^{u+1}s_1^{n_1-1}s_2^{n_2-1}s_3^{n_3-1+2j}\nonumber \\=&\left(-A^{-n_1-n_2-2}\sum_{j=0}^{u}s_1^{n_1}s_2^{n_2}s_3^{n_3+2j}-A^{-n_1-n_2}\sum_{j=0}^{u-1}s_1^{n_1-1}s_2^{n_2-1}s_3^{n_3+1+2j}\right)\nonumber\\&+\left(-A^{-n_1-n_2}\sum_{j=0}^{u+1}s_1^{n_1-1}s_2^{n_2-1}s_3^{n_3-1+2j}-A^{-n_1-n_2+2}\sum_{j=0}^{u}s_1^{n_1-2}s_2^{n_2-2}s_3^{n_3+2j}\right)\nonumber\\=&F_{u}^{n_1,n_2,n_3}+F_{u+1}^{n_1-1,n_2-1,n_3-1}.\nonumber
        \end{align}
        \begin{align}
            \sum_{j=0}^{u}R_{12}(n_1,n_2,n_3+2j)=&\left(-A^{-n_1-n_2-2}\sum_{j=0}^{u}s_1^{n_1}s_2^{n_2}s_3^{n_3+2j}-A^{-n_1-n_2}\sum_{j=0}^{u+1}s_1^{n_1-1}s_2^{n_2-1}s_3^{n_3-1+2j}\right)\nonumber\\&+\left(-A^{-n_1-n_2}\sum_{j=0}^{u-1}s_1^{n_1-1}s_2^{n_2-1}s_3^{n_3+1+2j}-A^{-n_1-n_2+2}\sum_{j=0}^{u}s_1^{n_1-2}s_2^{n_2-2}s_3^{n_3+2j}\right)\nonumber\\=&\tilde{F}_{u}^{n_1,n_2,n_3}+\tilde{F}_{u-1}^{n_1-1,n_2-1,n_3+1}.\nonumber
        \end{align}
    \end{proof}
\end{lemma}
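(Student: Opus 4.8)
The plan is to prove both equalities by directly expanding the left-hand sum through the four-term formula for $R_{12}$ and then reindexing the resulting $s_3$-sums so that they match the definitions of $F_u$ and $\tilde F_u$. By Proposition \ref{Proposition 3.2} (with $b=s_3$), each summand $R_{12}(n_1,n_2,n_3+2j)$ is a linear combination of the four monomials $s_1^{n_1}s_2^{n_2}s_3^{n_3+2j}$, $s_1^{n_1-2}s_2^{n_2-2}s_3^{n_3+2j}$, $s_1^{n_1-1}s_2^{n_2-1}s_3^{n_3+1+2j}$, and $s_1^{n_1-1}s_2^{n_2-1}s_3^{n_3-1+2j}$, with $A$-coefficients independent of $j$. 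Summing over $0\le j\le u$ therefore produces four sums: the first two range over the common set of $s_3$-exponents $\{n_3,n_3+2,\dots,n_3+2u\}$, while the last two are its $+1$- and $-1$-shifts.

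First I would observe that the $s_1^{n_1}s_2^{n_2}$-term of the sum is exactly the first half of $F_u^{n_1,n_2,n_3}$, and that under the substitution $(n_1,n_2,n_3,u)\mapsto(n_1-1,n_2-1,n_3-1,u+1)$ the $s_1^{n_1-2}s_2^{n_2-2}$-term is exactly the second half of $F_{u+1}^{n_1-1,n_2-1,n_3-1}$. Thus the two sums running over the full range $0\le j\le u$ are already accounted for, and proving the first equality reduces to the single identity of $s_3$-sums
\begin{align}
\sum_{j=0}^{u}s_3^{n_3+1+2j}+\sum_{j=0}^{u}s_3^{n_3-1+2j}=\sum_{j=0}^{u-1}s_3^{n_3+1+2j}+\sum_{j=0}^{u+1}s_3^{n_3-1+2j},\nonumber
\end{align}
which matches the combined $s_1^{n_1-1}s_2^{n_2-1}$-contributions on the two sides. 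This identity I would verify by comparing the multiplicity of each basis element $s_3^{n_3+m}$: both sides contain $s_3^{n_3-1}$ and $s_3^{n_3+2u+1}$ once, and every intermediate $s_3^{n_3+1},s_3^{n_3+3},\dots,s_3^{n_3+2u-1}$ twice. Regrouping accordingly yields $F_u^{n_1,n_2,n_3}+F_{u+1}^{n_1-1,n_2-1,n_3-1}$.

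The second equality follows from the same expansion under the complementary grouping: now the $-1$-shifted sum with upper limit $u+1$ is attached to the main term to build $\tilde F_u^{n_1,n_2,n_3}$, while the $+1$-shifted sum with upper limit $u-1$ is attached to the $s_1^{n_1-2}s_2^{n_2-2}$-term (under $(n_1,n_2,n_3,u)\mapsto(n_1-1,n_2-1,n_3+1,u-1)$) to build $\tilde F_{u-1}^{n_1-1,n_2-1,n_3+1}$. Since both regroupings merely rearrange the identical collection of monomials, they automatically agree, delivering the full chain of equalities at once.

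This argument is purely computational; the only point requiring care is the bookkeeping of the summation ranges, namely that absorbing the common middle exponents forces the ``$u-1$ versus $u+1$'' split in the $F$-grouping and the reversed ``$u+1$ versus $u-1$'' split in the $\tilde F$-grouping. There is no conceptual obstacle, so the main effort is simply to track the indices carefully enough that the two regroupings land exactly on the stated expressions for $F$ and $\tilde F$.
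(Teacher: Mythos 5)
Your proof is correct and takes essentially the same route as the paper's: both expand $\sum_{j=0}^{u}R_{12}(n_1,n_2,n_3+2j)$ via the four-term formula for $R_{12}$, apply the reindexing identity $\sum_{j=0}^{u}s_3^{n_3+1+2j}+\sum_{j=0}^{u}s_3^{n_3-1+2j}=\sum_{j=0}^{u-1}s_3^{n_3+1+2j}+\sum_{j=0}^{u+1}s_3^{n_3-1+2j}$, and then regroup the four sums in the two ways matching the definitions of $F$ and $\tilde{F}$. Your explicit multiplicity count of the $s_3$-exponents merely spells out the shift of summation limits that the paper performs implicitly, so the two arguments coincide in substance.
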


\begin{lemma}
\label{Lemma R12}
    \begin{align}
        &\sum_{i=u_0}^{u_1}\sum_{j=0}^{i+c}(-1)^iR_{12}(n_1-i,n_2-i,n_3-i+2j)\nonumber\\&=(-1)^{u_0}F_{u_0+c}^{n_1-u_0,n_2-u_0,n_3-u_0}+(-1)^{u_1}F_{u_1+c+1}^{n_1-u_1-1,n_2-u_1-1,n_3-u_1-1},\nonumber\\&\sum_{i=u_0}^{u_1}\sum_{j=0}^{i+c}(-1)^iR_{12}(n_1+i,n_2+i,n_3-i+2j)\nonumber\\&=(-1)^{u_0}\tilde{F}_{u_0+c-1}^{n_1+u_0-1,n_2+u_0-1,n_3-u_0+1}+(-1)^{u_1}\tilde{F}_{u_1+c}^{n_1+u_1,n_2+u_1,n_3-u_1},\nonumber
    \end{align}
    where $\ u_1\ge u_0,\ u_0+c\ge 0$.
    \begin{proof}
        \begin{align}
            &\sum_{i=u_0}^{u_1}\sum_{j=0}^{i+c}(-1)^iR_{12}(n_1-i,n_2-i,n_3-i+2j)\nonumber\\&=\sum_{i=u_0}^{u_1}(-1)^i\sum_{j=0}^{i+c}R_{12}(n_1-i,n_2-i,n_3-i+2j)\nonumber\\&\xlongequal{Lemma\ \ref{Lemma R=F+F=F+F} }\sum_{i=u_0}^{u_1}(-1)^i \left(F_{i+c}^{n_1-i,n_2-i,n_3-i}+F_{i+c+1}^{n_1-i-1,n_2-i-1,n_3-i-1}\right)\nonumber\\&=\sum_{i=u_0}^{u_1}(-1)^i F_{i+c}^{n_1-i,n_2-i,n_3-i}-\sum_{i=u_0+1}^{u_1+1}(-1)^iF_{i+c}^{n_1-i,n_2-i,n_3-i}\nonumber\\&=(-1)^{u_0}F_{u_0+c}^{n_1-u_0,n_2-u_0,n_3-u_0}+(-1)^{u_1}F_{u_1+c+1}^{n_1-u_1-1,n_2-u_1-1,n_3-u_1-1},\nonumber
        \end{align}
        \begin{align}
            &\sum_{i=u_0}^{u_1}\sum_{j=0}^{i+c}(-1)^iR_{12}(n_1+i,n_2+i,n_3-i+2j)\nonumber\\&=\sum_{i=u_0}^{u_1}(-1)^i\sum_{j=0}^{i+c}R_{12}(n_1+i,n_2+i,n_3-i+2j)\nonumber\\&\xlongequal{Lemma\ \ref{Lemma R=F+F=F+F} } \sum_{i=u_0}^{u_1}(-1)^i \left(\tilde{F}_{i+c}^{n_1+i,n_2+i,n_3-i}+\tilde{F}_{i+c-1}^{n_1+i-1,n_2+i-1,n_3-i+1}\right)\nonumber\\&=\sum_{i=u_0}^{u_1}(-1)^i \tilde{F}_{i+c}^{n_1+i,n_2+i,n_3-i}-\sum_{i=u_0-1}^{u_1-1}(-1)^i\tilde{F}_{i+c}^{n_1+i,n_2+i,n_3-i}\nonumber\\&=(-1)^{u_0}\tilde{F}_{u_0+c-1}^{n_1+u_0-1,n_2+u_0-1,n_3-u_0+1}+(-1)^{u_1}\tilde{F}_{u_1+c}^{n_1+u_1,n_2+u_1,n_3-u_1}\nonumber.
        \end{align}
    \end{proof}
\end{lemma}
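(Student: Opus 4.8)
The plan is to reduce both identities to a telescoping sum, with Lemma~\ref{Lemma R=F+F=F+F} supplying the collapse of the inner $j$-sum into a pair of $F$'s (resp.\ $\tilde F$'s).

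First I would fix $i$ and apply the first equality of Lemma~\ref{Lemma R=F+F=F+F} to the inner sum, writing
\[
\sum_{j=0}^{i+c}R_{12}(n_1-i,n_2-i,n_3-i+2j)=F_{i+c}^{n_1-i,n_2-i,n_3-i}+F_{i+c+1}^{n_1-i-1,n_2-i-1,n_3-i-1}.
\]
The hypothesis $u_0+c\ge 0$, together with $i\ge u_0$, guarantees $i+c\ge 0$, so every $F$ appearing is well-defined. Setting $a_i:=F_{i+c}^{n_1-i,n_2-i,n_3-i}$, the second summand is exactly $a_{i+1}$ after the substitution $i\mapsto i+1$, so the outer sum becomes $\sum_{i=u_0}^{u_1}(-1)^i(a_i+a_{i+1})$. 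The alternating sign telescopes this to $(-1)^{u_0}a_{u_0}+(-1)^{u_1}a_{u_1+1}$, which unwinds to precisely the claimed right-hand side.

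For the second identity I would run the identical argument using the second equality of Lemma~\ref{Lemma R=F+F=F+F}, writing each inner sum as $\tilde{F}_{i+c}^{n_1+i,n_2+i,n_3-i}+\tilde{F}_{i+c-1}^{n_1+i-1,n_2+i-1,n_3-i+1}$. Setting $b_i:=\tilde{F}_{i+c}^{n_1+i,n_2+i,n_3-i}$, the second summand now equals $b_{i-1}$ (via $i\mapsto i-1$), so $\sum_{i=u_0}^{u_1}(-1)^i(b_i+b_{i-1})$ telescopes to $(-1)^{u_0}b_{u_0-1}+(-1)^{u_1}b_{u_1}$, matching the stated formula.

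The computation is essentially routine once Lemma~\ref{Lemma R=F+F=F+F} is in hand; there is no deep obstacle, only index bookkeeping. The one place I would be careful is verifying that the ``tail'' term at step $i$ genuinely coincides with the ``head'' term at step $i+1$ (resp.\ $i-1$): I would substitute $i\mapsto i+1$ (resp.\ $i\mapsto i-1$) into the single subscript and all three superscripts of $F$ (resp.\ $\tilde F$) and confirm that all four data agree, since this matching is exactly what licenses the telescoping collapse and pins down the endpoint signs.
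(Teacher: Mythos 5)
Your proposal is correct and follows essentially the same route as the paper: apply Lemma~\ref{Lemma R=F+F=F+F} to collapse each inner $j$-sum into $F_{i+c}^{n_1-i,n_2-i,n_3-i}+F_{i+c+1}^{n_1-i-1,n_2-i-1,n_3-i-1}$ (resp.\ the $\tilde F$ pair), then telescope the alternating outer sum via the index shift $i\mapsto i+1$ (resp.\ $i\mapsto i-1$), exactly as in the paper's reindexed sums $\sum_{i=u_0+1}^{u_1+1}$ and $\sum_{i=u_0-1}^{u_1-1}$. Your explicit check that the tail term at step $i$ matches the head term at the shifted index, and your remark that $u_0+c\ge 0$ keeps all subscripts in range, are exactly the bookkeeping the paper's computation implicitly performs.
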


    \begin{align}
        &R_{12}^{-n_1,k_2+n_2,n_3}\nonumber\\&+A^{2n_1-2}\sum_{i=0}^{n_1-2}\sum_{j=0}^{i}(-1)^iR_{12}^{n_1-2-i,k_2+n_2-i,n_3-i+2j}+A^{2n_1}\sum_{i=0}^{n_1-1}\sum_{j=0}^{i+1}(-1)^iR_{12}^{n_1-1-i,k_2+n_2-1-i,n_3-1-i+2j}\nonumber\\&+A^{2n_1}\sum_{i=1}^{n_1-2}\sum_{j=0}^{i-1}(-1)^iR_{12}^{n_1-1-i,k_2+n_2-1-i,n_3+1-i+2j}+A^{2n_1+2}\sum_{i=0}^{n_1-1}\sum_{j=0}^{i}(-1)^iR_{12}^{n_1-i,k_2+n_2-2-i,n_3-i+2j}\nonumber\\&\xlongequal{\ Lemma\ \ref{Lemma R12}}\nonumber\\&R_{12}(-n_1,k_2+n_2,n_3)-R_{12}(k_1+n_1,-n_2,n_3)\nonumber\\&+A^{2n_1-2}\left( F_{0}^{n_1-2,k_2+n_2,n_3}+(-1)^{n_1}F_{n_1-1}^{-1,k_2+n_2-n_1+1,n_3-n_1+1}\right.\nonumber\\&\left.-\tilde{F}_{-1}^{k_1-n_1+1,-n_2-1,n_3+1}-(-1)^{n_1}\tilde{F}_{n_1-2}^{k_1,-n_2+n_1-2,n_3-n_1+2}\right)\nonumber\\&+A^{2n_1}\left( F_{1}^{n_1-1,k_2+n_2-1,n_3-1}-(-1)^{n_1}F_{n_1+1}^{-1,k_2+n_2-n_1-1,n_3-n_1-1}\right.\nonumber\\&-\tilde{F}_{0}^{k_1-n_1,-n_2,n_3}+(-1)^{n_1}\tilde{F}_{n_1}^{k_1,-n_2+n_1,n_3-n_1}\nonumber\\& -F_{0}^{n_1-2,k_2+n_2-2,n_3}+(-1)^{n_1}F_{n_1-2}^{0,k_2+n_2-n_1,n_3-n_1+2}\nonumber\\&\left.+\tilde{F}_{-1}^{k_1-n_1+2,-n_2+1,n_3+1}-(-1)^{n_1}\tilde{F}_{n_1-3}^{k_1-1,-n_2+n_1-1,n_3-n_1+3}\right)\nonumber\\&+A^{2n_1+2}\left(F_{0}^{n_1,k_2+n_2-2,n_3}-(-1)^{n_1}F_{n_1}^{0,k_2+n_2-n_1-2,n_3-n_1}\right.\nonumber\\&\left.-\tilde{F}_{-1}^{k_1-n_1-1,-n_2+1,n_3+1}+(-1)^{n_1}\tilde{F}_{n_1-1}^{k_1-1,-n_2+n_1+1,n_3-n_1+1}\right),\nonumber
    \end{align}
    \begin{align}
        &-A^{2n_1+2n_2}R_{12}^{-n_1+k_1,n_2,n_3}\nonumber\\&-A^{2n_2+2}\sum_{i=0}^{n_1-2}\sum_{j=0}^{i}(-1)^iR_{12}^{n_1+k_1-2-i,n_2-i,n_3-i+2j}-A^{2n_2}\sum_{i=0}^{n_1-1}\sum_{j=0}^{i+1}(-1)^iR_{12}^{n_1+k_1-1-i,n_2-1-i,n_3-1-i+2j}\nonumber \\ &-A^{2n_2}\sum_{i=1}^{n_1-2}\sum_{j=0}^{i-1}(-1)^iR_{12}^{n_1+k_1-1-i,n_2-1-i,n_3+1-i+2j}-A^{2n_2-2}\sum_{i=0}^{n_1-1}\sum_{j=0}^{i}(-1)^iR_{12}^{n_1+k_1-i,n_2-2-i,n_3-i+2j}\nonumber\\&\xlongequal{\ Lemma\ \ref{Lemma R12}}\nonumber\\&-A^{2n_1+2n_2}R_{12}(-n_1+k_1,n_2,n_3)+A^{2n_1+2n_2}R_{12}(n_1,-n_2+k_2,n_3)\nonumber\\&+A^{2n_2+2}\left( F_{0}^{n_1+k_1-2,n_2,n_3}+(-1)^{n_1}F_{n_1-1}^{k_1-1,n_2-n_1+1,n_3-n_1+1}\right.\nonumber\\&\left.-\tilde{F}_{-1}^{-n_1+1,k_2-n_2-1,n_3+1}-(-1)^{n_1}\tilde{F}_{n_1-2}^{0,k_2-n_2+n_1-2,n_3-n_1+2}\right)\nonumber\\&+A^{2n_2}\left( F_{1}^{n_1+k_1-1,n_2-1,n_3-1}-(-1)^{n_1}F_{n_1+1}^{k_1-1,n_2-n_1-1,n_3-n_1-1}\right.\nonumber\\&-\tilde{F}_{0}^{-n_1,k_2-n_2,n_3}+(-1)^{n_1}\tilde{F}_{n_1}^{0,k_2-n_2+n_1,n_3-n_1}\nonumber\\&-F_{0}^{n_1+k_1-2,n_2-2,n_3}+(-1)^{n_1}F_{n_1-2}^{k_1,n_2-n_1,n_3-n_1+2}\nonumber\\&\left.+\tilde{F}_{-1}^{-n_1+2,k_2-n_2+1,n_3+1}-(-1)^{n_1}\tilde{F}_{n_1-3}^{-1,k_2-n_2+n_1-1,n_3-n_1+3}\right)\nonumber\\&+A^{2n_2-2}\left(F_{0}^{n_1+k_1,n_2-2,n_3}-(-1)^{n_1}F_{n_1}^{k_1,n_2-n_1-2,n_3-n_1}\right.\nonumber
        \\&\left.-\tilde{F}_{-1}^{-n_1-1,k_2-n_2+1,n_3+1}+(-1)^{n_1}\tilde{F}_{n_1-1}^{-1,k_2-n_2+n_1+1,n_3-n_1+1}\right).\nonumber
    \end{align}

    Therefore,
        \begin{align}
            Left&=\left(R_{12}(-n_1,k_2+n_2,n_3)+A^{2n_1-2}F_0^{n_1-2,k_2+n_2,n_3}+A^{2n_1}F_1^{n_1-1,k_2+n_2-1,n_3-1}\right.\nonumber\\&\left.-A^{2n_1}F_0^{n_1-2,k_2+n_2-2,n_3}+A^{2n_1+2}F_0^{n_1,k_2+n_2-2,n_3}\right)\nonumber\\&-\left(R_{12}(k_1+n_1,-n_2,n_3)+A^{2n_2+2}F_0^{n_1+k_1-2,n_2,n_3}+A^{2n_2}F_1^{n_1+k_1-1,n_2-1,n_3-1}\right.\nonumber\\&\left.-A^{2n_2}F_0^{n_1+k_1-2,n_2-2,n_3}+A^{2n_2-2}F_0^{n_1+k_1,n_2-2,n_3}\right)\nonumber\\&-A^{2n_1+2n_2}\left(R_{12}(-n_1+k_1,n_2,n_3)+A^{-2n_2-2}\tilde{F}_{-1}^{k_1-n_1+1,-n_2-1,n_3+1}+A^{-2n_2}\tilde{F}_0^{k_1-n_1,-n_2,n_3}\right.\nonumber\\&-A^{-2n_2}F_{-1}^{-n_1+1,-n_2+1,n_3+1}\left.+A^{-2n_2+2}F_{-1}^{k_1-n_1-1,-n_2+1,n_3+1}\right)\nonumber\\&+A^{2n_1+2n_2}\left(R_{12}(n_1,-n_2+k_2,n_3)+A^{-2n_1+2}\tilde{F}_{-1}^{-n_1+1,k_2-n_2-1,n_3+1}+A^{-2n_1}\tilde{F}_0^{-n_1,k_2-n_2,n_3}\right.\nonumber\\&\left.-A^{-2n_1}\tilde{F}_{-1}^{-n_1+1,k_2-n_2+1,n_3+1}+A^{-2n_1-2}\tilde{F}_{-1}^{-n_1-1,k_2-n_2+1,n_3+1}\right)\nonumber\\&-(-1)^{n_1}\left(A^{2n_1-2}\tilde{F}_{n_1-2}^{k_1,-n_2+n_1-2,n_3-n_1+2}+A^{2n_1}\tilde{F}_{n_1-3}^{k_1-1,-n_2+n_1-1,n_3-n_1+3}\right.\nonumber\\&\left.+A^{2n_2+2}F_{n_1-1}^{k_1-1,n_2-n_1+1,n_3-n_1+1}+A^{2n_2}F_{n_1-2}^{k_1,n_2-n_1,n_3-n_1+2}\right)\nonumber\\&+(-1)^{n_1}\left(A^{2n_1}\tilde{F}_{n_1}^{k_1,-n_2+n_1,n_3-n_1}+A^{2n_1+2}\tilde{F}_{n_1-1}^{k_1-1,-n_2+n_1+1,n_3-n_1+1}\right.\nonumber\\&\left.+A^{2n_2}F_{n_1+1}^{k_1-1,n_2-n_1-1,n_3-n_1-1}+A^{2n_2-2}F_{n_1}^{k_1,n_2-n_1-2,n_3-n_1}\right)\nonumber\\&+(-1)^{n_1}A^{2n_1-2}\left(F_{n_1-1}^{-1,k_2+n_2-n_1+1,n_3-n_1+1}+A^2F_{n_1-2}^{0,k_2+n_2-n_1,n_3-n_1+2}\right)\nonumber\\&-(-1)^{n_1}A^{2n_1}\left(F_{n_1+1}^{-1,k_2+n_2-n_1-1,n_3-n_1-1}+A^2F_{n_1}^{0,k_2+n_2-n_1-2,n_3-n_1}\right)\nonumber\\&+(-1)^{n_1}A^{2n_2}\left(A^2\tilde{F}_{n_1-2}^{0,k_2-n_2+n_1-2,n_3-n_1+2}+\tilde{F}_{n_1-3}^{-1,k_2-n_2+n_1-1,n_3-n_1+3}\right)\nonumber\\&-(-1)^{n_1}A^{2n_2-2}\left(A^2\tilde{F}_{n_1}^{0,k_2-n_2+n_1,n_3-n_1}+\tilde{F}_{n_1-1}^{-1,k_2-n_2+n_1+1,n_3-n_1+1}\right)\nonumber\\&\xlongequal{Lemma\ \ref{Lemma F+F}\ \ref{Lemma 4+F}}0.\nonumber
        \end{align}
    \end{proof}
    
\subsection{Proof of Lemma \ref{Lemma formula 1}}
    \begin{proof}
    Only prove the first equation of Lemma \ref{Lemma formula 1}, the method for the second equation is the same. We prove the following lemmas.
    \begin{lemma}
    \label{Lemma 1_1}
        \begin{align}
            \sum_{i=u_0}^{u_1}(-1)^iR_{12}(n_1-i,n_2-i,i)=&-(-1)^{u_0}A^{-n_1-n_2+2u_0-2}s_1^{n_1-u_0}s_2^{n_2-u_0}s_3^{u_0}\nonumber\\&-(-1)^{u_0}A^{-n_1-n_2+2u_0}s_1^{n_1-u_0-1}s_2^{n_2-u_0-1}s_3^{u_0-1}\nonumber\\&-(-1)^{u_1}A^{-n_1-n_2+2u_1}s_1^{n_1-u_1-1}s_2^{n_2-u_1-1}s_3^{u_1+1}\nonumber\\&-(-1)^{u_1}A^{-n_1-n_2+2u_1+2}s_1^{n_1-u_1-2}s_2^{n_2-u_1-2}s_3^{u_1}\nonumber,\\\sum_{i=u_0}^{u_1}(-1)^iR_{12}(n_1+i,n_2+i,i)=&-(-1)^{u_0}A^{-n_1-n_2-2u_0-2}s_1^{n_1+u_0}s_2^{n_2+u_0}s_3^{u_0}\nonumber\\&-(-1)^{u_0}A^{-n_1-n_2-2u_0}s_1^{n_1+u_0-1}s_2^{n_2+u_0-1}s_3^{u_0+1}\nonumber\\&-(-1)^{u_1}A^{-n_1-n_2-2u_1-2}s_1^{n_1+u_1}s_2^{n_2+u_1}s_3^{u_1}\nonumber\\&-(-1)^{u_1}A^{-n_1-n_2-2u_1}s_1^{n_1+u_1-1}s_2^{n_2+u_1-1}s_3^{u_1+1}\nonumber,
        \end{align}
        where $u_0\leq u_1$, and there are similar equations for $R_{13}$ and $R_{23}$.
        \begin{proof}
        We only prove the first equation.
            \begin{align}
                \sum_{i=u_0}^{u_1}(-1)^iR_{12}(n_1-i,n_2-i,i)=&\sum_{i=u_0}^{u_1}(-1)^i\left(-A^{-n_1-n_2+2i-2}s_1^{n_1-i}s_2^{n_2-i}s_3^{i}\right.\nonumber\\&-A^{-n_1-n_2+2i+2}s_1^{n_1-i-2}s_2^{n_2-i-2}s_3^{i}\nonumber\\&-A^{-n_1-n_2+2i}s_1^{n_1-i-1}s_2^{n_2-i-1}s_3^{i+1}\nonumber\\&\left.-A^{-n_1-n_2+2i}s_1^{n_1-i-1}s_2^{n_2-i-1}s_3^{i-1}\right)\nonumber\\=&\sum_{i=u_0}^{u_1}(-1)^i\left(-A^{-n_1-n_2+2i-2}s_1^{n_1-i}s_2^{n_2-i}s_3^{i}\right.\nonumber\\&\left.-A^{-n_1-n_2+2i}s_1^{n_1-i-1}s_2^{n_2-i-1}s_3^{i-1}\right)\nonumber\\-&\sum_{i=u_0+1}^{u_1+1}(-1)^i\left(-A^{-n_1-n_2+2i-2}s_1^{n_1-i}s_2^{n_2-i}s_3^{i}\right.\nonumber\\&\left.-A^{-n_1-n_2+2i}s_1^{n_1-i-1}s_2^{n_2-i-1}s_3^{i-1}\right)\nonumber\\=&-(-1)^{u_0}A^{-n_1-n_2+2u_0-2}s_1^{n_1-u_0}s_2^{n_2-u_0}s_3^{u_0}\nonumber\\&-(-1)^{u_0}A^{-n_1-n_2+2u_0}s_1^{n_1-u_0-1}s_2^{n_2-u_0-1}s_3^{u_0-1}\nonumber\\&-(-1)^{u_1}A^{-n_1-n_2+2u_1}s_1^{n_1-u_1-1}s_2^{n_2-u_1-1}s_3^{u_1+1}\nonumber\\&-(-1)^{u_1}A^{-n_1-n_2+2u_1+2}s_1^{n_1-u_1-2}s_2^{n_2-u_1-2}s_3^{u_1}\nonumber.
            \end{align}
        \end{proof}
    \end{lemma}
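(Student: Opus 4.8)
The plan is to reduce the identity to a telescoping sum by inserting the closed form of $R_{12}$ and then regrouping its four monomials. Recall from Proposition \ref{Proposition 3.2} (with $b=s_3$) that
\[
R_{12}(N_1,N_2,N_3)=-A^{-N_1-N_2-2}s_1^{N_1}s_2^{N_2}s_3^{N_3}-A^{-N_1-N_2+2}s_1^{N_1-2}s_2^{N_2-2}s_3^{N_3}-A^{-N_1-N_2}s_1^{N_1-1}s_2^{N_2-1}s_3^{N_3+1}-A^{-N_1-N_2}s_1^{N_1-1}s_2^{N_2-1}s_3^{N_3-1}.
\]
Setting $(N_1,N_2,N_3)=(n_1-i,n_2-i,i)$ and using $-(n_1-i)-(n_2-i)=-n_1-n_2+2i$, each summand becomes four monomials whose $A$-exponents depend on $i$ only through the even shift $2i$; this is exactly the structural feature that makes the alternating sum collapse.

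The key step is to pair these four monomials correctly. I would define
\[
T(i)=-A^{-n_1-n_2+2i-2}s_1^{n_1-i}s_2^{n_2-i}s_3^{i}-A^{-n_1-n_2+2i}s_1^{n_1-i-1}s_2^{n_2-i-1}s_3^{i-1}.
\]
A direct check shows that the two remaining monomials of $R_{12}(n_1-i,n_2-i,i)$ are precisely $T(i+1)$, so that $R_{12}(n_1-i,n_2-i,i)=T(i)+T(i+1)$. Substituting this and splitting the sum gives
\[
\sum_{i=u_0}^{u_1}(-1)^iR_{12}(n_1-i,n_2-i,i)=\sum_{i=u_0}^{u_1}(-1)^iT(i)-\sum_{i=u_0+1}^{u_1+1}(-1)^iT(i)=(-1)^{u_0}T(u_0)+(-1)^{u_1}T(u_1+1),
\]
where the re-indexing $i\mapsto i+1$ in the second block turns $(-1)^i$ into $-(-1)^{i+1}$ and leaves only the endpoint terms. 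Writing out $T(u_0)$ and $T(u_1+1)$ explicitly then reproduces the four boundary monomials claimed on the right-hand side.

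The second identity, with $R_{12}(n_1+i,n_2+i,i)$, follows the same way after reversing the sign of the index shift in the exponents, and the analogous statements for $R_{13}$ and $R_{23}$ hold verbatim, since their closed forms (obtained earlier from Proposition \ref{Proposition 3.2}) share the identical telescoping shape in the relevant pair of indices. I expect the only delicate point to be the choice of pairing: one must group the $s_3^{i}$-term of $A$-degree $2i-2$ with the $s_3^{i-1}$-term (not with the $s_3^{i+1}$-term), so that the leftover pair at index $i$ matches $T(i+1)$ on the nose, and then track how the index shift interacts with the sign $(-1)^i$ so that the surviving endpoints carry $(-1)^{u_0}$ and $(-1)^{u_1}$ rather than their negatives.
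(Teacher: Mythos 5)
Your proposal is correct and matches the paper's own proof essentially verbatim: both expand $R_{12}(n_1-i,n_2-i,i)$ via Proposition \ref{Proposition 3.2}, pair the $s_3^{i}$-monomial of $A$-degree $-n_1-n_2+2i-2$ with the $s_3^{i-1}$-monomial (your $T(i)$), identify the remaining two monomials as $T(i+1)$, and telescope the alternating sum after the re-index $i\mapsto i+1$, leaving the boundary terms $(-1)^{u_0}T(u_0)+(-1)^{u_1}T(u_1+1)$. The paper does not name the pair $T(i)$, but its displayed regrouping into two shifted sums is exactly this decomposition, and it likewise proves only the first equation and asserts the rest by symmetry.
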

    \begin{lemma}
    \label{Lemma 1_2}
    \begin{align}
        &R_{23}(n_1,n_2,n_3)\nonumber\\&=A^{n_1-n_3}\sum_{i=0}^{n_1}(-1)^i\left(R_{12}(n_1-i,n_2-i,n_3+i)+A^2R_{12}(n_1+1-i,n_2-1-i,n_3-1+i)\right)\nonumber\\&=A^{n_1-n_2}\sum_{i=0}^{n_1}(-1)^i\left(R_{13}(n_1-i,n_2+i,n_3-i)+A^2R_{13}(n_1+1-i,n_2-1+i,n_3-1-i)\right),\nonumber
    \end{align}
    where $n_1\ge 0$.
    \begin{proof}
         We only prove the first equation.
         \begin{align}
             Right&=(-1)^{n_3}A^{n_1-n_3}\sum_{i=n_3}^{n_1+n_3}(-1)^iR_{12}(n_1+n_3-i,n_2+n_3-i,i)\nonumber\\&\quad-(-1)^{n_3}A^{n_1-n_3+2}\sum_{i=n_3-1}^{n_1+n_3-1}(-1)^iR_{12}(n_1+n_3-i,n_2+n_3-2-i,i)\nonumber\\ &\xlongequal{Lemma\ \ref{Lemma 1_1}}\nonumber\\&\quad(-1)^{n_3}A^{n_1-n_3}\left(-(-1)^{n_3}A^{-n_1-n_2-2}s_1^{n_1}s_2^{n_2}s_3^{n_3}-(-1)^{n_3}A^{-n_1-n_2}s_1^{n_1-1}s_2^{n_2-1}s_3^{n_3-1}\right.\nonumber\\&\quad\left.-(-1)^{n_1+n_3}A^{n_1-n_2}s_1^{-1}s_2^{n_2-n_1-1}s_3^{n_1+n_3+1}-(-1)^{n_1+n_3}A^{n_1-n_2+2}s_1^{-2}s_2^{n_2-n_1-2}s_3^{n_1+n_3}\right)\nonumber\\&\quad-(-1)^{n_3}A^{n_1-n_3+2}\left(-(-1)^{n_3-1}A^{-n_1-n_2-2}s_1^{n_1+1}s_2^{n_2-1}s_3^{n_3-1}-(-1)^{n_3-1}A^{-n_1-n_2}s_1^{n_1}s_2^{n_2-2}s_3^{n_3-2}\right.\nonumber\\&\quad\left.-(-1)^{n_1+n_3-1}A^{n_1-n_2}s_1^{0}s_2^{n_2-n_1-2}s_3^{n_1+n_3}-(-1)^{n_1+n_3-1}A^{n_1-n_2+2}s_1^{-1}s_2^{n_2-n_1-3}s_3^{n_1+n_3-2}\right)\nonumber\\&=-A^{-n_2-n_3-2}s_1^{n_1}s_2^{n_2}s_3^{n_3}-A^{-n_2-n_3+2}s_1^{n_1}s_2^{n_2-2}s_3^{n_3-2}\nonumber\\&\quad-A^{-n_2-n_3}s_1^{n_1+1}s_2^{n_2-1}s_3^{n_3-1}-A^{-n_2-n_3}s_1^{n_1-1}s_2^{n_2-1}s_3^{n_3-1}\nonumber\\&=Left.\nonumber
         \end{align}
    \end{proof}
    \end{lemma}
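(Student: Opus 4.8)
The plan is to establish the first equality directly in the free module $S_{2,\infty}(H_2)$; the second then follows by the identical argument after interchanging the second and third index slots and replacing $R_{12}$ by $R_{13}$ (using the $R_{13}$-version of Lemma \ref{Lemma 1_1}). Each of $R_{12}(a,b,c)$, $R_{13}(a,b,c)$, $R_{23}(a,b,c)$ has an explicit four-monomial expansion in the basis $\{s_1^{l_1}s_2^{l_2}s_3^{l_3}\}$ recorded in Section 3.2, so both sides of the claim are concrete alternating sums of basis monomials; the whole point is that the $R_{12}$-sum on the right telescopes down to the four monomials comprising $R_{23}(n_1,n_2,n_3)$.

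First I would reindex the two sums on the right into the canonical telescoping shape $\sum_i(-1)^iR_{12}(m_1-i,m_2-i,i)$ that Lemma \ref{Lemma 1_1} evaluates. The shift $i\mapsto i-n_3$ in the first sum and $i\mapsto i-n_3+1$ in the second yields
\begin{align}
\mathrm{RHS}={}&(-1)^{n_3}A^{n_1-n_3}\sum_{i=n_3}^{n_1+n_3}(-1)^iR_{12}(n_1+n_3-i,n_2+n_3-i,i)\nonumber\\
&-(-1)^{n_3}A^{n_1-n_3+2}\sum_{i=n_3-1}^{n_1+n_3-1}(-1)^iR_{12}(n_1+n_3-i,n_2+n_3-2-i,i).\nonumber
\end{align}
The hypothesis $n_1\ge 0$ is precisely what ensures $u_0\le u_1$ for both sums ($n_3\le n_1+n_3$ and $n_3-1\le n_1+n_3-1$), so Lemma \ref{Lemma 1_1} applies to each.

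Applying Lemma \ref{Lemma 1_1} collapses each sum to four boundary monomials, two from the lower endpoint and two from the upper. After multiplying back by the prefactors $(-1)^{n_3}A^{n_1-n_3}$ and $-(-1)^{n_3}A^{n_1-n_3+2}$, the four \emph{lower}-endpoint monomials come out to be exactly the four terms of $R_{23}(n_1,n_2,n_3)$, while the four \emph{upper}-endpoint monomials all disappear: two of them carry $s_1^{-1}=S_{-1}(a_1)=0$ and vanish, and of the remaining two one carries $s_1^{-2}=S_{-2}(a_1)=-s_1^0$, which after this rewriting cancels against the other $s_1^0$ monomial. Hence
\begin{align}
\mathrm{RHS}={}&-A^{-n_2-n_3-2}s_1^{n_1}s_2^{n_2}s_3^{n_3}-A^{-n_2-n_3+2}s_1^{n_1}s_2^{n_2-2}s_3^{n_3-2}\nonumber\\
&-A^{-n_2-n_3}s_1^{n_1+1}s_2^{n_2-1}s_3^{n_3-1}-A^{-n_2-n_3}s_1^{n_1-1}s_2^{n_2-1}s_3^{n_3-1}=R_{23}(n_1,n_2,n_3),\nonumber
\end{align}
which is the first equality.

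The main obstacle is entirely in the endpoint accounting: checking that the two index shifts produce the correct global sign $(-1)^{n_3}$ and the correct offsets in the $A$-exponents, and then invoking the Chebyshev conventions $S_{-1}=0$ and $S_{-2}=-S_0$ so that the spurious high-$s_3$-degree upper-endpoint monomials (those with $s_3^{n_1+n_3+1}$, $s_3^{n_1+n_3}$, $s_3^{n_1+n_3-2}$) vanish or cancel in pairs. Once the reindexing is matched to the hypotheses of Lemma \ref{Lemma 1_1}, the telescoping does all the algebraic work and nothing structural remains beyond tracking these finitely many boundary terms.
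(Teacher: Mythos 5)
Your proposal is correct and matches the paper's proof essentially step for step: the same reindexing of both sums into the telescoping form $\sum_i(-1)^iR_{12}(m_1-i,m_2-i,i)$ (with identical index ranges and the $(-1)^{n_3}$ prefactors), the same application of Lemma \ref{Lemma 1_1}, and the same endpoint bookkeeping. Your explicit remark that the upper-endpoint terms die via $S_{-1}=0$ and $S_{-2}=-S_0$ (the $s_1^{-2}$ monomial cancelling the $s_1^{0}$ one) is exactly the cancellation the paper performs silently in its final simplification.
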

    \begin{lemma}
    \label{Lemma 1_3}
        \begin{align}
            &A^{-n_3}\sum_{i=0}^{n_1}(-1)^i\left(R_{12}(-n_1+k_1+i,n_2+i,n_3+i)+A^2R_{12}(-n_1+k_1-1+i,n_2+1+i,n_3-1+i)\right)\nonumber\\=&A^{-n_2}\sum_{i=0}^{n_1}(-1)^i\left(R_{13}(-n_1+k_1+i,n_2+i,n_3+i)+A^2R_{13}(-n_1+k_1-1+i,n_2-1+i,n_3+1+i)\right),\nonumber
        \end{align}
        where $n_1\ge0$.
        \begin{proof}
            \begin{align}
                Left&=(-1)^{n_3}A^{-n_3}\sum_{i=n_3}^{n_1+n_3}(-1)^iR_{12}(-n_1-n_3+k_1+i,n_2-n_3+i,i)\nonumber\\ &\quad-(-1)^{n_3}A^{-n_3+2}\sum_{i=n_3-1}^{n_1+n_3-1}(-1)^{i}R_{12}(-n_1-n_3+k_1+i,n_2-n_3+2+i,i)\nonumber\\&\xlongequal{Lemma\ \ref{Lemma 1_1}}\nonumber\\&\quad(-1)^{n_3}A^{-n_3}\left(-(-1)^{n_3}A^{n_1-n_2-k_1-2}s_1^{-n_1+k_1}s_2^{n_2}s_3^{n_3}-(-1)^{n_3}A^{n_1-n_2-k_1}s_1^{-n_1+k_1-1}s_2^{n_2-1}s_3^{n_3+1}\right.\nonumber\\&\quad\left.-(-1)^{n_1+n_3}A^{-n_1-n_2-k_1-2}s_1^{k_1}s_2^{n_1+n_2}s_3^{n_1+n_3}-(-1)^{n_1+n_3}A^{-n_1-n_2-k_1}s_1^{k_1-1}s_2^{n_1+n_2-1}s_3^{n_1+n_3+1}\right)\nonumber\\&\quad-(-1)^{n_3}A^{-n_3+2}\left(-(-1)^{n_3-1}A^{n_1-n_2-k_1-2}s_1^{-n_1+k_1-1}s_2^{n_2+1}s_3^{n_3-1}-(-1)^{n_3-1}A^{n_1-n_2-k_1}s_1^{-n_1+k_1-2}s_2^{n_2}s_3^{n_3}\right.\nonumber\\&\quad\left.-(-1)^{n_1+n_3-1}A^{-n_1-n_2-k_1-2}s_1^{k_1-1}s_2^{n_1+n_2+1}s_3^{n_1+n_3-1}-(-1)^{n_1+n_3-1}A^{-n_1-n_2-k_1}s_1^{k_1-2}s_2^{n_1+n_2}s_3^{n_1+n_3}\right)\nonumber\\&=-A^{n_1-n_2-n_3-k_1-2}s_1^{-n_1+k_1}s_2^{n_2}s_3^{n_3}-A^{n_1-n_2-n_3-k_1}s_1^{-n_1+k_1-1}s_2^{n_2-1}s_3^{n_3+1}\nonumber\\&\quad-A^{n_1-n_2-n_3-k_1}s_1^{-n_1+k_1-1}s_2^{n_2+1}s_3^{n_3-1}-A^{n_1-n_2-n_3-k_1+2}s_1^{-n_1+k_1-2}s_2^{n_2}s_3^{n_3}\nonumber\\&\quad-(-1)^{n_1}A^{-n_1-n_2-n_3-k_1-2}s_1^{k_1}s_2^{n_1+n_2}s_3^{n_1+n_3}-(-1)^{n_1}A^{-n_1-n_2-n_3-k_1}s_1^{k_1-1}s_2^{n_1+n_2-1}s_3^{n_1+n_3+1}\nonumber\\&\quad-(-1)^{n_1}A^{-n_1-n_2-n_3-k_1}s_1^{k_1-1}s_2^{n_1+n_2+1}s_3^{n_1+n_3-1}-(-1)^{n_1}A^{-n_1-n_2-n_3-k_1+2}s_1^{k_1-2}s_2^{n_1+n_2}s_3^{n_1+n_3}.\nonumber
            \end{align}
            Similarly,
            \begin{align}
                Right&=-A^{n_1-n_2-n_3-k_1-2}s_1^{-n_1+k_1}s_2^{n_2}s_3^{n_3}-A^{n_1-n_2-n_3-k_1}s_1^{-n_1+k_1-1}s_2^{n_2-1}s_3^{n_3+1}\nonumber\\&\quad-A^{n_1-n_2-n_3-k_1}s_1^{-n_1+k_1-1}s_2^{n_2+1}s_3^{n_3-1}-A^{n_1-n_2-n_3-k_1+2}s_1^{-n_1+k_1-2}s_2^{n_2}s_3^{n_3}\nonumber\\&\quad-(-1)^{n_1}A^{-n_1-n_2-n_3-k_1-2}s_1^{k_1}s_2^{n_1+n_2}s_3^{n_1+n_3}-(-1)^{n_1}A^{-n_1-n_2-n_3-k_1}s_1^{k_1-1}s_2^{n_1+n_2-1}s_3^{n_1+n_3+1}\nonumber\\&\quad-(-1)^{n_1}A^{-n_1-n_2-n_3-k_1}s_1^{k_1-1}s_2^{n_1+n_2+1}s_3^{n_1+n_3-1}-(-1)^{n_1}A^{-n_1-n_2-n_3-k_1+2}s_1^{k_1-2}s_2^{n_1+n_2}s_3^{n_1+n_3}\nonumber\\&=Left.\nonumber
            \end{align}
        \end{proof}
    \end{lemma}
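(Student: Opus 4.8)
The plan is to collapse each alternating sum into a short list of explicit Chebyshev monomials $s_1^{a}s_2^{b}s_3^{c}$ by means of the telescoping identities of Lemma~\ref{Lemma 1_1}, and then match the two resulting lists. The only obstruction to applying Lemma~\ref{Lemma 1_1} verbatim is that there all three arguments of $R_{12}$ are tied to the running index in the pattern $(n_1\pm i,\,n_2\pm i,\,i)$, with the $s_3$-index equal to $i$, whereas in Lemma~\ref{Lemma 1_3} every argument is shifted by $i$. So the first move is to reindex each sum so that its $s_3$-argument becomes the summation variable.

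Concretely, for the first summand on the left I substitute $i\mapsto i-n_3$, which turns $R_{12}(-n_1+k_1+i,\,n_2+i,\,n_3+i)$ into $R_{12}\bigl((-n_1-n_3+k_1)+i,\,(n_2-n_3)+i,\,i\bigr)$ and produces the global factor $(-1)^{n_3}A^{-n_3}$ together with new summation limits $i=n_3,\dots,n_1+n_3$. This is exactly the shape $\sum(-1)^iR_{12}(a+i,b+i,i)$ governed by the second identity of Lemma~\ref{Lemma 1_1}, so it telescopes to its four endpoint monomials at $i=n_3$ and $i=n_1+n_3$. I treat the $A^2R_{12}$ summand the same way, now shifting by $i\mapsto i-n_3+1$; after applying Lemma~\ref{Lemma 1_1} the entire left-hand side reduces to eight explicit monomials, the interior contributions having cancelled.

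Next I repeat the procedure on the right-hand side: reindex each $R_{13}(\cdots)$ so that its $s_3$-index is the running variable, then invoke the $R_{13}$-analogue of Lemma~\ref{Lemma 1_1} (asserted there to hold by the same telescoping) to obtain a second list of eight monomials. The final step is to compare the two lists term by term. Because the endpoint data on the two sides are built from the same shifts $-n_1+k_1$, $n_2$, $n_3$ and the same range length $n_1$, the eight surviving monomials from the left coincide with the eight from the right once the accompanying powers of $A$ and the signs $(-1)^{n_3}$, $(-1)^{n_1+n_3}$ are accounted for.

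The heart of the argument, and the only place where care is genuinely needed, is this last matching of exponents. One must verify, for each endpoint piece, that the exponent of $A$ generated by the reindexing shift matches on both sides and that the triples $(a,b,c)$ of Chebyshev indices agree; in the few degenerate cases where an index drops to $-1$ or $-2$ one uses the conventions $S_{-1}=0$ and $S_{-2}=-S_0$. I expect no conceptual difficulty beyond this bookkeeping, since the two sides are by construction telescopic sums of the identical underlying family of monomials.
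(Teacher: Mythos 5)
Your proposal is correct and follows essentially the same route as the paper: reindex each alternating sum so that the summation variable sits in the slot prescribed by Lemma~\ref{Lemma 1_1}, telescope each of the four sums down to its endpoint contributions, and observe that both sides collapse to the identical list of eight monomials $s_1^{a}s_2^{b}s_3^{c}$ with matching powers of $A$ and signs $(-1)^{n_3}$, $(-1)^{n_1+n_3}$. One minor correction: for the $R_{13}$ sums the running variable should be the $s_2$-index (shift by $n_2$, consistent with the prefactor $A^{-n_2}$ in the statement) rather than the $s_3$-index, since in $R_{13}$ it is the second slot that carries the $\pm 1$ shifts --- but this is exactly what the $R_{13}$-analogue of Lemma~\ref{Lemma 1_1} that you invoke dictates, so the argument goes through unchanged.
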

    Therefore,
    \begin{align}
        Left&=R_{23}(n_1,n_2,n_3)\nonumber\\&\quad -A^{n_1-n_3}\sum_{i=0}^{n_1}(-1)^i\left(R_{12}(n_1-i,n_2-i,n_3+i)\right.\nonumber\\&\quad\left.+A^2R_{12}(n_1+1-i,n_2-1-i,n_3-1+i)\right)\nonumber\\&\quad-R_{23}(n_1,-n_2+k_1,-n_3+k_3)\nonumber\\&\quad -A^{n_1+n_2-k_2}\sum_{i=0}^{n_1}(-1)^i\left(R_{13}(n_1-i,n_2-k_2-2-i,-n_3+k_3-i)\right.\nonumber\\&\quad \left.+A^2R_{13}(n_1+1-i,n_2-k_2-1-i,n_3-k_3-1-i)\right)\nonumber\\&\quad +A^{n_1-n_3}\sum_{i=0}^{n_1}(-1)^i\left(R_{12}(-n_1+k_1+i,-n_2+k_2+i,n_3+i)\right.\nonumber\\&\quad\left.+A^2R_{12}(-n_1+k_1-1+i,-n_2+k_2+1+i,n_3-1+i)\right)\nonumber\\&\quad +A^{n_1+n_2-k_2}\sum_{i=0}^{n_1}(-1)^i\left(R_{13}(-n_1+k_1+i,n_2-k_2-2-i,n_3+i)\right.\nonumber\\&\quad \left.+A^2R_{13}(-n_1+k_1-1+i,n_2-k_2-1-i,n_3+1+i)\right)\nonumber\\&\xlongequal{Lemma\ \ref{Lemma 1_2}}\ 0.\nonumber
    \end{align}
\end{proof}

\putbib[main]
\end{bibunit}

\end{document}